\documentclass[11pt]{amsart}
\usepackage{hyperref}
\usepackage{amsfonts,amssymb,amsmath,amsthm,amscd,euscript,array,mathrsfs}
\usepackage[all]{xy}
\usepackage{bbm}
\usepackage{booktabs}
\usepackage{color}

\usepackage[foot]{amsaddr}
\input{mymacros-3.sty} 



\addtolength{\topmargin}{-.5cm}
\addtolength{\textheight}{1cm}
\addtolength\textwidth{4cm}
\addtolength{\oddsidemargin}{-2cm}
\addtolength{\evensidemargin}{-2cm}
\renewcommand{\arraystretch}{1.2}

\title{The Capelli eigenvalue problem for  Lie superalgebras}

\author{Siddhartha Sahi $^{\mathrm{\lowercase{a}}}$}
\address{ $^{\mathrm{\lowercase{a}}}$Department of Mathematics,    
    Rutgers University,
    110 Frelinghuysen Rd, 
    Piscataway, NJ 08854-8019,
    USA
  }
  \email{sahi@math.rutgers.edu}

  \author{Hadi Salmasian$^{\mathrm{\lowercase{b}}}$}

  \address{
  $^{\mathrm{\lowercase{b}}}$ Department of Mathematics and Statistics,
    University of Ottawa,
    585 King Edward Ave,
    Ottawa, Ontario,
    Canada K1N 6N5%
  }

  \email{hadi.salmasian@uottawa.ca}

\author{Vera Serganova$^{\mathrm{\lowercase{c}}}$}
\address{
$^{\mathrm{\lowercase{c}}}$ Department of Mathematics,
    University of California at Berkeley,
    969 Evans Hall,
    Berkeley, CA 94720,
    USA
    } 
\email{serganov@math.berkeley.edu}

\thanks{\textsc{Acknowledgements.} The research of Siddhartha Sahi was partially supported by a Simons Foundation grant (509766), of Hadi Salmasian by NSERC Discovery Grants (RGPIN-2013-355464 and RGPIN-2018-04044), and of Vera Serganova by an NSF Grant  (1701532). This work was initiated during the Workshop on Hecke Algebras and Lie Theory, which was held at the University of Ottawa. The first and the second named authors thank the National Science Foundation (DMS-162350), the Fields Institute, and the University of Ottawa for funding this workshop.\\[-3mm]}

\begin{document}

\begin{abstract}
For a  finite dimensional unital complex simple Jordan superalgebra $J$, the Tits-Kantor-Koecher construction
yields a 3-graded Lie superalgebra $\gflat\cong \gflat(-1)\oplus\gflat(0)\oplus\gflat(1)$, such that $\gflat(-1)\cong J$. Set $V:=\gflat(-1)^*$ and $\g g:=\gflat(0)$. 
   In most cases, the space  $\sP(V)$ of superpolynomials on $V$ is a completely reducible and multiplicity-free representation of $\g g$, and  there exists a direct sum decomposition $\sP(V):=\bigoplus_{\lambda\in\Omega}V_\lambda$, where $\left(V_\lambda\right)_{\lambda\in\Omega}$ is a family of irreducible $\g g$-modules parametrized by a set of partitions $\Omega$. In these cases, 
one can define  a natural basis $\left(D_\lambda\right)_{\lambda\in\Omega}$ 
of ``Capelli operators'' for the algebra $\sPD(V)^{\g g}$ of $\g g$-invariant superpolynomial differential operators on $V$.
In this paper we complete the solution to  the Capelli eigenvalue problem, which  asks for the determination of the scalar 
$c_\mu(\lambda)$ by which $D_\mu$  acts on $V_\lambda$. 

We  associate a restricted root system $\mathit{\Sigma}$ to the symmetric pair $(\g g,\g k)$ that corresponds to $J$, which  is either a deformed root system of type $\mathsf{A}(m,n)$ or a  root system of type $\mathsf{Q}(n)$.  
We prove  a necessary and sufficient condition on the structure of $\mathit{\Sigma}$ for  $\sP(V)$ to be completely reducible and multiplicity-free.
When $\mathit{\Sigma}$ satisfies the latter condition  we obtain an explicit formula for the eigenvalue 
$c_\mu(\lambda)$, in terms of Sergeev-Veselov's shifted super Jack polynomials when $\mathit{\Sigma}$ is of type $\mathsf{A}(m,n)$, and Okounkov-Ivanov's factorial Schur $Q$-polynomials when $\mathit{\Sigma}$ is of type $\mathsf{Q}(n)$. 
Along the way, we prove 
that the  natural map from the centre of the enveloping algebra of $\g g$ into $\sPD(V)^{\g g}$ is surjective in all cases except when $J\cong\mathit{F}$, where $\mathit{F}$ is the
10-dimensional exceptional Jordan superalgebra. 

\end{abstract}

\maketitle

\section{Introduction and main results}
\label{sec:introduction}
Let $J$ be a finite dimensional unital complex simple Jordan superalgebra (for the classification of these Jordan superalgebras see \cite{KacJ} and \cite{CanKac}). 
The Tits-Kantor-Koecher construction  (see
Appendix \ref{Sec-TKKconst}) associates
\footnote{We remark that $\gflat$ is a slight modification of the 
simple Lie superalgebra 
 that is constructed from $J$ by the
Kantor functor (see Remark \ref{rmk-reasongflat}).}
 to  $J$ a Lie
superalgebra $\gflat$ together with an imbedded $\g{sl}_2$-triple 
$\g s:=\spn_\C\{h,e,f\}$ where 
\begin{equation}
\label{Eqhefrel}
[h,e]=2e,\ [e,f]=h,\text{ and }[h,f]=-2f.
\end{equation} 
Following Kac (see \cite{KacJ} and 
\cite{CanKac}), we consider the grading of $\gflat$ by the eigenspaces of $\ad(-\frac{1}{2}h)$. Then we obtain a  \textquotedblleft short grading\textquotedblright
\begin{equation*}
\gflat\cong \gflat(-1)\oplus \gflat(0)\oplus \gflat(1),
\end{equation*}%
where $\gflat(-1)\cong J$ and $e$ is the identity element of $J$. Set $%
\mathfrak{g}:=\gflat(0)$ and $\mathfrak{k}:=\mathrm{stab}_{\mathfrak{g}}\left(
e\right)$. Then $\left( \mathfrak{g,k}\right) $ is a symmetric pair, and in
fact $\mathfrak{k=g}^{\Theta }$ where $\Theta:=\Ad_w$, for 
$w\in \mathrm{PSL}_2(\C)$ representing the nontrivial element of the Weyl group,
defined as in 
\eqref{involw}. 

Set $V:=\gflat(-1)^{\ast }$, 
where $\gflat(-1)^{\ast }$ denotes the dual of the $\g g$-module $\gflat%
(-1)$. 
 Let $\sP(V)$ denote the superalgebra of superpolynomials on $V$. Note that there is a canonical $\g g$-module isomorphism $\sP(V)\cong \sS(J)$, where $\sS(J)$ denotes the symmetric algebra of the $\Z/2$-graded vector space $J$. 
In most cases
(see Theorem \ref{thm:compl-red}),
the $\g g$-module $\sP(V)$ is  
completely reducible  
and multiplicity-free, and  
the irreducible summands of $\sP(V)$ are parametrized by a set of partitions $\Omega$, i.e., \[
\sP(V)\cong\bigoplus_{\lambda\in\Omega}
V_\lambda,
\]
where the $V_\lambda$ are mutually non-isomorphic irreducible finite dimensional $\g g$-modules.  In these cases, 
to each $\lambda$ one can associate 
a
\emph{Capelli operator}
\footnote{The classical Capelli operator appears as a special case of the operators $D_\lambda$. For this reason, we call the $D_\lambda$ the \emph{Capelli operators}.}
  $D_\lambda\in \sPD(V)^\g g$, where 
$\sPD(V)^\g g$ denotes the algebra of $\g g$-invariant superpolynomial differential operators on $V$.
Indeed the family 
$\left(D_\lambda\right)_{\lambda\in\Omega}$ forms a basis of $\sPD(V)^\g g$ (see Remark \ref{rem:CapelliEven}). 

From Schur's Lemma it follows that each operator $D_\mu$ acts on $V_\lambda$ by a scalar $c_\mu(\lambda)$. The problem of calculating this scalar 
(the \emph{Capelli eigenvalue problem})
has a long history (see below).
In this paper we complete the solution of this problem in the super setting.

For ordinary Jordan algebras (i.e., when $J_\ood=\{0\}$),  $\sP(V)$ is a multiplicity-free representation of the reductive Lie algebra $\g g$
(see \cite{Schmid}, \cite{JosephP}). In this case, 
the solution to the Capelli eigenvalue problem was given when $D_\lambda$ corresponds to a one-dimensional representation by  Kostant and the first author in \cite{KostantSahi91}, and later in full generality by the first author in \cite{Sahi94}.
Indeed in \cite{Sahi94} the first author introduced a \textquotedblleft
universal\textquotedblright\ family of symmetric polynomials $\varphi _{\mu
}^{\left( \rho \right) }( x) $ characterized by certain vanishing
properties, and depending on an auxiliary vector $\rho =\left( \rho
_{1},\ldots ,\rho _{n}\right) $. The main result of \cite{Sahi94} is that $%
c_{\mu }(\lambda )=$ $\varphi _{\mu }^{\left( r\delta \right) }(\lambda
+r\delta )$, such that $\delta: =\left( 0,-1,\ldots ,-n+1\right) $ 
where $n$ is the rank of the symmetric space 
$(\g g,\g k)$
associated to the Jordan algebra $J$,
and $r$ is
half the multiplicity of restricted roots. 

The polynomials $\varphi _{\mu }^{\left( r\delta \right) }( x) $
were studied by Knop and the first author  in 
\cite{KSDiff}
 for arbitrary $r$, who proved that they
satisfy a system of difference equations, which are a discrete version of
the Debiard-Sekiguchi system for Jack polynomials
\cite{Debiard}, \cite{Sekiguchi}.  Knop and the first author deduced that the
top-degree terms of  $\varphi _{\mu }^{\left( r\delta \right) }$ are
proportional to the Jack polynomials $P_{\mu }^{\left( 1/r\right) }$. For this
reason, the $\varphi _{\mu }^{\left( r\delta \right) }( x) $ are
sometimes referred to as Knop--Sahi polynomials, or shifted Jack
polynomials. The supersymmetric analogue of these polynomials was constructed
by Sergeev and Veselov \cite{SVSuperJack}. The top-degree terms of the Sergeev-Veselov polynomials $\SPP_\lambda(x,y,\theta)$ are the super Jack polynomials. An analogous family of polynomials $Q_\lambda^*(x)$ whose top-degree terms are the 
Schur $Q$-polynomials was defined by Okounkov and Ivanov
\cite{Ivanov}.


The study of the Capelli eigenvalue problem 
for Jordan superalgebras was initiated in \cite{SahSalAdv}, where it was solved in the cases $J\cong \mathit{gl}(m,n)_+$ and $J\cong \mathit{osp}(n,2m)_+$.
These Jordan superalgebras correspond to  
symmetric pairs of types 
$(\gl\times \gl,\gl)$ and $(\gl,\g{osp})$, 
respectively. 
 Extending the results of Kostant and Sahi to these Jordan superalgebras,  in \cite{SahSalAdv} the first two authors showed that  the eigenvalues  of the Capelli operators are obtained by  specialization of the polynomials $\SPP_\mu$
at $\theta=1,\frac{1}{2}$. Later, 
in \cite{AllSahSal}
the Capelli eigenvalue problem was considered for Jordan superalgebras of type $\mathit{q}(n)_+$, and it was shown that the eigenvalues $c_\mu(\lambda)$ are given by the polynomials $Q_\mu^*$.

In this paper, we complete the project started in
\cite{SahSalAdv} and \cite{AllSahSal}, and  solve the Capelli eigenvalue problem for general unital simple Jordan superalgebras.  The new phenomenon that
arises in the present setting is  the occurrence of  certain deformations of the  root system of
the Lie superalgebra $\gl(r|s)$, 
studied by Sergeev and Veselov \cite{SVDQ}, which we define below. 

Let $r,s\geq 0$ be integers. 
We represent the roots of the root system $\mathsf{A}(r-1,s-1)$ by 
\begin{equation}
\label{eq:rootsofAr1s1}
\mathsf{R}_{r,s}:=\left\{
\udl\eps_i-\udl\eps_{i'}\right\}_{1\leq i\neq i'\leq r}
\cup
\left\{\udl\delta_j-\udl\delta_{j'}\right\}_{1\leq j\neq j'\leq s}
\cup\left\{\pm\left(\udl\eps_i-\udl\delta_{j}\right)\right\}_{1\leq i\leq r,1\leq j\leq s},
\end{equation}
as a subset of the $(r+s)$-dimensional vector space
$\mathsf{E}_{r,s}:=\spn_{\R}\left\{\udl\eps_i,\udl\delta_j\,:\,1\leq i\leq r,1\leq j\leq s\right\}$.
Fix $\kappa\in\R$ (if $s>0$, we assume $\kappa\neq 0$), and   let  $\lag\cdot,\cdot\rag_\kappa^{}$ be a (unique up to a scalar)  nondegenerate symmetric bilinear form on $\mathsf{E}_{r,s}$ such that $\left\{\udl\eps_i^{}\right\}_{i=1}^r
\cup
\left\{\udl\delta_j\right\}_{j=1}^s$ is an orthogonal basis of $\mathsf{E}_{r,s}$ with respect to 
$\lag\cdot,\cdot\rag_\kappa^{}$ that satisfies
\[
\lag\udl\eps_{i},\udl\eps_{i}\rag_\kappa^{}
=
\lag\udl\eps_{j},\udl\eps_{j}\rag_\kappa^{}
=
\kappa^{-1}
\lag\udl\delta_{i'},\udl\delta_{i'}\rag_\kappa^{}
=
\kappa^{-1}
\lag\udl\delta_{j'},\udl\delta_{j'}\rag_\kappa^{}
\ \text{ for }\
1\leq i,j\leq r\text{ and }
1\leq i',j'\leq s.
\]
The deformed root system $\mathsf{A}_\kappa(r-1,s-1)$ is the subset $\mathsf{R}_{r,s}$ of
the quadratic space $\left(\mathsf{E}_{r,s},\lag \cdot,\cdot\rag_\kappa\right)$. 
The root multiplicities of $\mathsf{A}_\kappa(r-1,s-1)$ are defined 
to be
\begin{equation}
\label{multiplicitiess}
\mathrm{mult}(\udl\eps_i-\udl\eps_{i'}):=\kappa,\ 
\mathrm{mult}(\udl\delta_j-\udl\delta_{j'}):=
\kappa^{-1},
\text{ and }
\mathrm{mult}(\udl\eps_i-\udl\delta_j):=1.
\end{equation}

For convenience, from now on we assume that $J_\eev\neq\{0\}$. 
We remark that our techniques and results can easily be adapted to ordinary Jordan algebras, and the reason for excluding them is that they have been dealt with in \cite{Sahi94}.

\begin{rmk}If $J\cong \mathit{JP}(0,n)$, then $\sP^2(V)$ is not completely reducible. Therefore without loss of generality, from now on we  exclude the Jordan superalgebras 
$\mathit{JP}(0,n)$.
\end{rmk}

Our next goal is to associate a set of restricted roots $\mathit{\Sigma}$ to $J$.  
The Lie superalgebra $\g g$ that is associated to $J$ is isomorphic to  one of the types $\gl$, $\gl\times \gl$, $\g{gosp}$, or $\g q\times \g q$.
Throughout the paper, we will use a standard  matrix realization of  $\g g$ 
that is given in Section \ref{sec-BorelKemd}. In this realization, 
there is a natural Cartan subalgebra $\g h\sseq \g g$ such that $\g h_\eev$ is equal to the subspace of diagonal matrices and $\Theta(\g h)=\g h$. 
Note that $\g h_\ood=\{0\}$ except when $J\cong\mathit{q}(n)_+$ for $n\geq 2$. 

Since  $\Theta(\g h_\eev)=\g h_\eev$, we have 
a direct sum decomposition \[
\g h_\eev=\g t_\eev\oplus\g a_\eev,
\] where 
$\g t_\eev$ and $\g a_\eev$ are the $+1$ and $-1$ eigenspaces of $\Theta\big|_{\g h_\eev}$, respectively.
Let $\mathit{\Delta}$ denote the root system of $\g g$  corresponding to $\g h_\eev$, and  
set 
\begin{equation}
\label{eq:DfnofitSigm}
\mathit{\Sigma}:=
\left\{\alpha\big|_{\g a_\eev}\,:\,\alpha\in\mathit{\Delta}\right\}\bls \{0\}.
\end{equation}
Assume that $\mathit{\Sigma}\neq\varnothing$ (see
Remark \ref{rmk:caseooffosp12n}). Then according to the structure of $\mathit{\Sigma}$,
the Jordan superalgebras $J$  can be divided into  two classes (type $\mathsf{A}$ and type $\mathsf{Q}$)  defined below. 
\subsection*{Jordan superalgebras of type $\mathsf{A}$} 
Assume that  $J$ is one of the Jordan superalgebras that appear in  Table \ref{tbl-JtypeA}. Then 
$\mathit{\Sigma}$ is a root system of type $\mathsf{A}(r-1,s-1)$, where 
$r:=\sfr_{J,+}$ and $s:=\sfr_{J,-}$ are given in Table 
\ref{tbl-JtypeA}. 
We represent  this root system as in 
\eqref{eq:rootsofAr1s1}. 
Furthermore, in these cases $\gflat$ always has an invariant non-degenerate supersymmetric even bilinear form (see Table \ref{tbl-1}). Fix such a bilinear form $\lag\cdot,\cdot\rag_{\flat}$ on $\gflat$ (the choice of the bilinear form will not matter in what follows). Then the restriction
$\lag\cdot,\cdot\rag_{\flat}\big|_{\g a_\eev\times\g a_\eev}$ is also non-degenerate,  and therefore it induces an isomorphism $\g a_\eev^{}\cong \g a^*_\eev$. Via the latter isomorphism, $\lag\cdot,\cdot\rag_{\flat}\big|_{\g a_\eev\times\g a_\eev}$ induces a bilinear form \[
\lag\cdot,\cdot\rag_J^{}:\g a^*_\eev\times \g a^*_\eev\to\C.
\] 
For $\alpha\in\mathit{\Sigma}$, we denote the corresponding restricted root space of $\g g$ by  $\g g_\alpha$. We define the multiplicity of 
each $\alpha\in\mathit{\Sigma}$ to be 
\[
\mathrm{mult}
(\alpha):=-\frac{1}{2}\mathrm{sdim}(\g g_\alpha),
\]
where 
 for any $\Z/2$-graded vector space $E:=E_\eev\oplus E_\ood$ we define
$\mathrm{sdim} E:=\dim E_\eev-\dim E_\ood$.
The last 3 columns of Table \ref{tbl-JtypeA} give 
the graded dimensions of the restricted root spaces.

One can now verify directly that $\mathit{\Sigma}$, 
considered as a subset of the quadratic space $\big(\g a^*_\eev,\lag\cdot,\cdot\rag_J^{}\big)$ and equipped with the multiplicities defined above, is the
deformed root system  $\mathsf{A}_{\kappa}(r-1,s-1)$. 
Set \[
\theta_J:=-\kappa.
\] Thus, the value of $\theta_J$ can be obtained from 
either of the two equalities
\begin{equation*}
\theta_J=
-\frac{\lag \udl\delta_1,\udl\delta_1\rag_J^{}}
{\lag \udl\eps_1,\udl\eps_1\rag_J^{}}
\,\text{ and }\,
\theta_J=
\frac{1}{2}\mathrm{sdim}(\underline{\eps}_i-\underline{\eps}_j),
\end{equation*}
and indeed in the cases that both of the quantities
$-\frac{\lag \udl\delta_1,\udl\delta_1\rag_J^{}}
{\lag \udl\eps_1,\udl\eps_1\rag_J^{}}$
and $
\frac{1}{2}\mathrm{sdim}(\underline{\eps}_i-\underline{\eps}_j)
$ are well-defined, they are equal.
The values of $\theta_J$ are given in Table \ref{tbl-JtypeA}. The details of the computations that yield the values of the parameters 
$\sfr_{J,+}$,  
$\sfr_{J,-}$,
and $\theta_J$ are  postponed until Section 
\ref{sec-BorelKemd}.

\begin{rmk}
In Case IV of Table \ref{tbl-JtypeA},
 we assume that $t\in\C\bls\{0,-1\}$ because 
$\mathit{D}_0$ is not simple and $\mathit{D}_{-1}\cong \mathit{gl}(1,1)_+$.
\end{rmk}


\begin{table}[h]

\begin{tabular}{ccccccccc}
 & 
$J$ & 
Remarks&
$\sfr_{J,+}$& 
$\sfr_{J,-}$&
$\theta_J$ &
${\pm(\underline{\eps}_i-\underline{\eps}_j)}$&
${\pm(\underline{\eps}_i-\underline{\delta}_j)}$&
${\pm(\underline{\delta}_i-\underline{\delta}_j)}$ 
\\
\hline 
I&
$\mathit{gl}(m,n)_+$ &
$m,n\geq 1$&
$m$ &
$n$ & 
$1$&
$2|0$ & 
$0|2$ & 
$2|0$ \\

II &
$\mathit{osp}(n,2m)_+$& 
$m,n\geq 1$ &
$m$ & 
$n$ &
$\frac{1}{2}$&
$1|0$ &
$0|2$ &
$4|0$ \\

III&
$(m,2n)_+$&
$m,n\geq 1$&
$2$&
$0$&
$\frac{m-1}{2}-n$ &
$m-1|2n$ &
$-$& 
$-$\\

IV&
$\mathit{D}_t$&
$t\neq 0,-1$&
$1$ &
$1$ &
$-\frac{1}{t}$&
$-$ & 
$0|2$ &
$-$  \\

V&
$\mathit{F}$ &
&
$2$&
$1$&
$\frac{3}{2}$&
$3|0$&
$0|2$&
$-$\\

\hline

\end{tabular}

\smallskip

\smallskip

\smallskip

\smallskip

\caption{$\mathit{\Sigma}$ of Type $\mathsf{A}$.}
\label{tbl-JtypeA}
\end{table}

\subsection*{Jordan superalgebras of type $\mathsf{Q}$}
Next assume that $J$ is one of the Jordan superalgebras that appear in Table \ref{tbl-JtypeQ}.
Then $\mathit{\Sigma}$ is a root system of type $\mathsf{Q}(r)$, where $r:=r_J$ is given in 
Table \ref{tbl-JtypeQ}. The graded dimension of all of the 
restricted root spaces is $(2|2)$.

\begin{table}[h]

\begin{tabular}{cccc}
&
$J$ & 
Remarks &
$\sfr_J$  \\

\hline
VI &
$\mathit{p}(n)_+$& 
$n\geq 2$ &
$n$\\

VII &
$\mathit{q}(n)_+$& 
$n\geq 2$ &
$n$\\

\hline

\end{tabular}

\smallskip

\smallskip

\smallskip

\smallskip

\caption{$\mathit{\Sigma}$ of Type $\mathsf{Q}$.}
\label{tbl-JtypeQ}
\end{table}

In the following remark,  $(m,2n)_+$ denotes the Jordan superalgebra with underlying space $\C1\oplus E$ and 
with product $a\circ b:=(a,b)_E1$, where $E$ is an $(m|2n)$-dimensional vector superspace equipped with a nondegenerate even supersymmetric bilinear form 
$(\cdot,\cdot)_{E}$. 

\begin{rmk}
\label{rmk:caseooffosp12n}
The only cases for which
$\mathit{\Sigma}=\varnothing$ are the Jordan superalgebras
 of type $(0,2n)_+$. Indeed it appears 
that 
the situation for these Jordan superalgebras
differs substantially from the other cases that are considered in this paper, for the following reasons.
First,  the Zariski closure 
of the set of highest weights 
that occur in $\sP(V)$
is not an affine subspace (see
Definition 
\ref{dfnaOmeg*} and Remark \ref{rmk-technicalasmptt}), and therefore it does not seem to be natural to consider the eigenvalues of the $D_\mu$  as a polynomial function on this Zariski closure (see Theorem \ref{thm-main-opVla}).
Second, even though $\sP(V)$ is a completely reducible and multiplicity-free $\g g$-module
(see \cite[Sec. 5.3]{CoulembierJLie2013}), the highest weights that occur in $\sP(V)$ look quite different from those that occur in the cases $J\cong (m,2n)_+$ for $m>0$. In particular,  the number of irreducible $\g g$-submodules occurring in the subspace $\sP^k(V)$ of homogeneous elements of degree $k$ in $\sP(V)$ stabilizes for $k\geq 2n$. Therefore unlike
the case $J\cong(m,2n)_+$ for $m>0$,  one cannot expect a parametrization of irreducible summands of $\sP(V)$ by 
hook partitions 
(see \eqref{mnhookPPP} below). 
 We hope to investigate these interesting cases in the future. 
In the rest of this paper, we assume that $J\not\cong(0,2n)_+$.
\end{rmk}

In order to state our first theorem (Theorem \ref{thm:compl-red}), we need  the parametrization of the irreducible summands of $\sP(V)$ by partitions.
For this parametrization,  we 
choose a Borel subalgebra \[
\g b:=\g h\oplus\g n
\] 
 of $\g g$ satisfying $\g g=\g k+\g b$. For the precise definition of $\g b$ 
and the embedding of $\g k$ as a subalgebra of $\g g$,
 see Section \ref{sec-BorelKemd}.
The quintuples $(\gflat,\g g,\g k,\g b,V)$
that are associated to the Jordan superalgebras $J$ are also listed in Table \ref{tbl-1}. 

 Let $\EuScript{P}$ denote the set of partitions. We represent elements of $\EuScript{P}$ by sequences of integers $\lambda:=(\lambda_i)_{i=1}^\infty$ such that $\lambda_i\geq \lambda_{i+1}$ for all $i\geq 1$, and $\lambda_i=0$ for all sufficiently large $i\in\N$.
As usual, the \emph{weight} of any $\lambda\in\EuScript{P}$ is defined by 
$|\lambda|:=\sum_{i=1}^\infty\lambda_i$.
A partition $\lambda:=(\lambda_i)_{i=1}^\infty\in\EuScript{P}$ is called \emph{strict} if 
$\lambda_i>\lambda_{i+1}$ for all $i\leq\ell(\lambda)$, where 
$\ell(\lambda):=\max\{i\,:\,\lambda_i>0\}$ denotes the length of $\lambda$.
For $n\geq 0$ let $\EuScript{DP}(n)$ be the set of strict partitions 
$\lambda$ such that $\ell(\lambda)\leq n$.
For $m,n\geq 0$, let 
$\EuScript{H}(m,n)$ be the set of \emph{$(m,n)$-hook} partitions, defined by 
\begin{equation}
\label{mnhookPPP}
\EuScript{H}(m,n):=\left\{ 
\lambda\in\EuScript{P}\,:\,\lambda_{m+1}\leq n
\right\}.
\end{equation}
For $d\geq 0$ set 
 \[
\EuScript{H}_d(m,n):=\left\{\lambda\in\EuScript{H}(m,n)\,:\,
|\lambda|=d\right\}\text{ and } \EuScript{DP}_d(n):=\{\lambda\in\EuScript{DP}(n)\,:\,|\lambda|=d\}.
\]
Also, set
\[
\mathcal S(m,n):=
\begin{cases}
\left\{-\frac{a}{b}\,:\,a,b\in\Z,\ a\geq 1,\ \text{and}\ 1\leq b\leq m-1\right\}&\text{ if }n=0,\\
\left\{-\frac{a}{b}\,:\,a,b\in\Z,\ 0\leq a\leq n,\ \text{and}\ b\geq 1\right\}&\text{ if }m=0,\\
\Q^{\leq 0}&\text{ otherwise.}
\end{cases}
\] 
Indeed $\mathcal S(m,n)$ is the set of admissible parameter values of the Sergeev-Veselov polynomials (see Theorem \ref{thm:SVSP*}). 
The first main result of this paper is the following.
\begin{thm}
\label{thm:compl-red}
Let $J$ be a finite dimensional unital complex simple Jordan superalgebra such that $J_\ood\neq\{0\}$.
Further, assume that $J$ is not isomorphic to one of the Jordan superalgebras of types $(0,2n)_+$ and $ \mathit{JP}(0,n)$.
Let  $\g g$, $\g b$, and $V$ be associated to $J$ as above.  Then the following assertions hold. 
\begin{itemize}
\item[\upshape(i)] When $J$ is of type 
$\mathsf{A}$,  the  $\g g$-module $\sP(V)$ is  completely reducible and multiplicity-free if and only if 
$\theta_J\not\in\mathcal S(\sfr_{J,+},\sfr_{J,-})$.
\item[\upshape(ii)] When $J$ is of type 
$\mathsf{Q}$, the $\g g$-module $\sP(V)$ is  completely reducible and multiplicity-free. 
\end{itemize}
Furthermore, whenever $\sP(V)$ is 
completely reducible and 
multiplicity-free,  for every $d\geq 0$ we have  
\begin{equation}
\label{eqthm1.9sPD}
\sP^d(V)\cong\bigoplus_{\lambda\in\Omega_d}V_\lambda
,
\end{equation} where 
$V_\lambda$ is the 
irreducible $\g g$-module
with the $\g b$-highest weight $\hww{\lambda}$ 
 given in Table \ref{tbl-3}, and 
\[
\Omega_d:=\begin{cases}
\EuScript{H}_d(\sfr_{J,+},\sfr_{J,-})
&\text{ if $J$ is of type $\mathsf{A}$,}\\
\EuScript{DP}_d(\sfr_J^{})
&\text{ if $J$ is of type $\mathsf{Q}$.}\\
\end{cases}
\]
\end{thm}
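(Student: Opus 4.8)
The plan is to reduce the statement to two separate analyses according to the type of the restricted root system $\mathit{\Sigma}$, and in each case to combine a structural description of $\sP(V)$ as a module over the associated symmetric pair with known multiplicity-free criteria. First I would recall the canonical $\g g$-module isomorphism $\sP(V)\cong\sS(J)$ and reduce the problem to understanding $\sS(J)$ as a representation of $\g g$ relative to the symmetric pair $(\g g,\g k)$, since $\sP(V)$ carries a natural filtration by degree and $\sP^d(V)\cong\sS^d(J)$. The key is that under the Tits--Kantor--Koecher setup, $V=\gflat(-1)^*$ is (up to the action) a ``spherical'' object for $(\g g,\g k)$, and one expects $\sP(V)$ to decompose into generalized Peter--Weyl-type pieces indexed by the dominant weights that are trivial on $\g k$; the partitions in $\EuScript{H}(\sfr_{J,+},\sfr_{J,-})$ (type $\mathsf A$) or $\EuScript{DP}(\sfr_J)$ (type $\mathsf Q$) are precisely the combinatorial labels of these weights. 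The table of highest weights $\hww\lambda$ in Table \ref{tbl-3} should be verified case by case using the explicit matrix realization from Section \ref{sec-BorelKemd}.

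For part (ii), the type $\mathsf Q$ case, I would argue that complete reducibility and multiplicity-freeness are essentially automatic: here $\g g$ is of type $\g q(n)\times\g q(n)$, and the relevant symmetric pair is the ``diagonal'' one, so $\sP(V)$ behaves like the coordinate ring of a group in the queer setting. One can invoke the decomposition of $\sS(J)$ into irreducibles via the super-analogue of the Cauchy identity for Schur $Q$-functions (which underlies the Okounkov--Ivanov factorial $Q$-polynomials), giving a multiplicity-free sum over strict partitions of length $\le n$; no parameter obstruction arises because there is no free deformation parameter in type $\mathsf Q$. The degree-$d$ piece then picks out strict partitions of weight $d$, giving $\Omega_d=\EuScript{DP}_d(\sfr_J)$.

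For part (i), the type $\mathsf A$ case, the dichotomy is controlled by the deformation parameter $\theta_J=-\kappa$ of the root system $\mathsf A_\kappa(\sfr_{J,+}-1,\sfr_{J,-}-1)$. I would first show, using the description of $\mathit{\Sigma}$ and the general theory of $(\g g,\g k)$-spherical representations in the super setting, that \emph{as a filtered module} $\sP(V)$ always has a ``formal'' decomposition indexed by hook partitions $\lambda\in\EuScript{H}(\sfr_{J,+},\sfr_{J,-})$, in the sense that the associated graded object or a suitable deformation has such a decomposition. The question is whether this formal decomposition is genuine, i.e.\ whether the would-be irreducible constituents $V_\lambda$ are actually irreducible and the extensions between them split. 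This is exactly where the Sergeev--Veselov shifted super Jack polynomials $\SPP_\lambda$ enter: their well-definedness requires $\theta_J\notin\mathcal S(\sfr_{J,+},\sfr_{J,-})$ (Theorem \ref{thm:SVSP*}), and a failure of this condition signals a collision of ``eigenvalues'' that forces a non-split extension or a multiplicity. Concretely, I would (a) for $\theta_J\notin\mathcal S$, build the Capelli-type operators and use their separating eigenvalues $\SPP_\lambda$ to conclude the $V_\lambda$ are pairwise non-isomorphic irreducibles and that $\sP(V)$ is semisimple (an eigenspace/separation-of-variables argument); and (b) for $\theta_J\in\mathcal S$, exhibit an explicit indecomposable-but-not-irreducible subquotient of some $\sP^d(V)$, or an explicit multiplicity, by locating the first degree $d$ at which the highest weight data in Table \ref{tbl-3} becomes non-generic.

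\textbf{Main obstacle.} The hardest part will be the ``only if'' direction of (i): producing, when $\theta_J\in\mathcal S(\sfr_{J,+},\sfr_{J,-})$, a concrete failure of complete reducibility or multiplicity-freeness in $\sP(V)$ rather than merely in the formal/combinatorial model. This requires pinning down exactly which $V_\lambda$ cease to be irreducible (or become isomorphic) for the exceptional parameter values, which in turn needs a delicate analysis of atypicality in the relevant category of $\g g$-modules and its interaction with the hook condition $\lambda_{\sfr_{J,+}+1}\le\sfr_{J,-}$. I expect this to be handled by translating the condition $\theta_J=-a/b\in\mathcal S$ into a linkage/atypicality condition on the weights $\hww\lambda$ and then invoking known results on blocks of $\gl(m|n)$-type and $\g{osp}$-type categories; the case-by-case nature of Table \ref{tbl-JtypeA} (especially Cases III and V) is likely to make this step long.
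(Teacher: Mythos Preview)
Your proposal has two genuine gaps.

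\textbf{First, the type $\mathsf Q$ analysis is incomplete.} You describe the type $\mathsf Q$ case as having $\g g\cong\g q(n)\oplus\g q(n)$ with the diagonal pair, and you propose a Cauchy-type argument for Schur $Q$-functions. But Table \ref{tbl-JtypeQ} contains \emph{two} families: Case VII ($J\cong q(n)_+$, $\g g\cong\g q(n)\oplus\g q(n)$) and Case VI ($J\cong p(n)_+$, $\g g\cong\gl(n|n)$, $\g k\cong\g p(n)$). These are very different symmetric pairs, and the Cauchy identity for $Q$-functions does not directly apply to Case VI. The paper handles Case VI by an explicit computation with Schur--Weyl--Sergeev duality on $(\C^{n|n})^{\otimes 2d}$ together with a combinatorial bijection $\lambda\mapsto\breve\lambda$ between strict partitions and $(n,n)$-hook partitions, followed by a conversion of highest weights between Borel subalgebras via odd reflections. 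None of this is visible in your sketch.

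\textbf{Second, the type $\mathsf A$ argument is circular.} For the ``if'' direction of (i) you want to ``build the Capelli-type operators and use their separating eigenvalues $\SPP_\lambda$ to conclude the $V_\lambda$ are pairwise non-isomorphic irreducibles and that $\sP(V)$ is semisimple.'' But the very definition of the $D_\lambda$ via \eqref{eq:capisodfnn} presupposes that $\sP(V)$ is already completely reducible and multiplicity-free. Even retreating to central elements of $\bfU(\g g)$ does not help: distinct eigenvalues of $\bfZ(\g g)$ would at best split $\sP(V)$ into generalized eigenspaces, not prove that each eigenspace is an irreducible module. The paper does \emph{not} bootstrap from the polynomial theory; rather, it first establishes complete reducibility by direct representation-theoretic arguments (Kac-module filtrations and typicality in Case IV, explicit highest-weight vectors plus a dimension count in Case V), and only afterwards connects the eigenvalues to $\SPP_\lambda$.

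For comparison, the paper's actual proof is entirely case-by-case on the new Cases IV--VI (Cases I--III and VII being cited from the literature per Remark \ref{rmk-someprvd}). In Case IV one shows $\sS^d(V^*)$ is projective in the category of finite-dimensional weight modules, extracts its Kac-module filtration from the $\g g_\eev$-decomposition of the odd part, and checks that the filtration splits precisely when every occurring highest weight is typical; this converts directly into the condition $-\tfrac{1}{t}\notin\Q^{\leq 0}$ and simultaneously yields the ``only if'' direction. In Case V one writes down explicit $\g b$-highest weight vectors $u^q(w_2)^rw_s$, verifies typicality, and closes the argument by a dimension equality $\sum_{\gamma\in\mathcal E_d}\dim(W_\gamma)_\eev=\dim\sS^d(V^*)_\eev$. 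Your proposed uniform spherical-pair framework, even if it could be made precise, would still have to recover these concrete facts.
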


\begin{rmk}
In Table \ref{tbl-3}, we represent the $\g b$-highest weight $\hww{\lambda}$
as a linear combination of the standard characters  of $\g h_\eev$,
when $\g g$ is 
realized as in  Section
\ref{sec-BorelKemd}. The standard character of the Lie superalgebras of types $\gl$, $\g{gosp}$, and $\g{q}$ are  given in 
 Appendices
\ref{subsec-gl}, \ref{subsec-osp}, and \ref{subsec-q} respectively.
The highest weights $\lambda_{m|n}^{\mathrm{st}}$
and $\lambda_n^{\mathrm{st}}$ 
that appear in Cases I and VII of Table \ref{tbl-3}
are 
defined in Appendices \ref{subsec-gl}.
and \ref{subsec-q} respectively.
\end{rmk}

\begin{rmk}
\label{rmk-someprvd}
We remark that several of the cases of 
Theorem \ref{thm:compl-red} 
 are already known. For $J$ corresponding to Cases I--III and VII of Table \ref{tbl-JtypeA} and Table \ref{tbl-JtypeQ},
Theorem \ref{thm:compl-red}  can be found in
\cite{Brinietal},
\cite{ChengWangLMP},
\cite{ChWacompositio}, 
and \cite{CoulembierJLie2013}. Thus,
the new cases of
Theorem \ref{thm:compl-red} are 
Cases IV--VI, for which the assertion
 is proved in Section \ref{sec-mulfree}.
\end{rmk}

In the rest of this section we assume that 
the $\g g$-module $\sP(V)$ is 
completely reducible and multiplicity-free. Set
\[
\Omega:=\bigcup_{d\geq 0}\Omega_d
.
\]
Then from \eqref{eqthm1.9sPD}  it follows that 
\begin{equation}
\label{eq:capisodfnn}
\sPD(V)^{\g g}\cong \left(\sP(V)\otimes\sS(V)\right)^{\g g}\cong \bigoplus_{\lambda,\mu\in\Omega}
\left(V_\lambda^{}\otimes V_\mu^*\right)^{\g g}\cong
\bigoplus_{\lambda,\mu\in\Omega}
\Hom_\g g(V_\mu,V_\lambda).
\end{equation}
For $\lambda\in\Omega$, let $D_\lambda$ be the element of $\sPD(V)^\g g$ that corresponds  to  $\mathrm{id}_{V_\lambda}\in \Hom_\g g(V_\lambda,V_\lambda)$
via the  isomorphism \eqref{eq:capisodfnn}.
\begin{rmk}
\label{rem:CapelliEven}
The family $(D_\lambda)_{\lambda\in\Omega}$ forms a basis of $\sPD(V)^{\g g}$. This is because
$V_\lambda$ is of type $\mathsf M$ in the sense of \cite[Sec. 3.1.2]{CWBook}, that is, $V_\lambda$ is irreducible as an ungraded module. In particular, there is no odd $\g g$-intertwining map $V_\lambda\to V_\lambda$. 
For Cases I--VI, this property of $V_\lambda$ is an immediate consequence of  highest weight theory for  Lie superalgebras of types $\gl$ and $\g{osp}$,
and for Case VII, it is verified in 
\cite[Sec. 3.1]{AllSahSal}.
\end{rmk}

Our second main result (Theorem \ref{thm-main-opVla})
yields an explicit formula for the eigenvalue $c_\mu(\lambda)$ of $D_\mu$ on $V_\lambda$. Before we state
Theorem \ref{thm-main-opVla}, we  need to recall the definitions of the shifted super Jack polynomials of Sergeev and Veselov \cite{SVSuperJack}, and the factorial Schur $Q$-polynomials of 
Okounkov and Ivanov \cite{Ivanov}.

For $m,n\geq 0$ let
$\sP_{m,n}$ denote the $\C$-algebra of 
polynomials in $m+n$ variables 
$x_1,\ldots,x_m$ and $y_1,\ldots,y_{n}$.
Fix $\theta\in\C$ 
(if $n>0$, we assume $\theta\neq 0$). 
Let 
$\Lambda_{m,n,\theta}^\natural\sseq\sP_{m,n}$ be the 
subalgebra of polynomials $f(x,y)$ with complex coefficients 
which are separately symmetric in 
$x:=(x_1,\ldots,x_m)$ and in $y:=(y_1,\ldots,y_n)$, 
and which satisfy the relation
\begin{equation*}
\textstyle f\left(x+\frac{1}{2}\sfe_i,y-\frac{1}{2}\sfe_j\right)=
f\left(x-\frac{1}{2}\sfe_i,y+\frac{1}{2}\sfe_j\right)
\end{equation*}
on every hyperplane $x_i+\theta y_j=0$, where $1\leq i\leq m$ and $1\leq j\leq n$.
Given any $\lambda\in\EuScript{H}(m,n)$, as in   \cite[Sec. 6]{SVSuperJack} 
for $1\leq i\leq m$ and $1\leq j\leq n$
we define
\begin{equation}
\label{eq:piqiFrb}
\textstyle
\sfp_i(\lambda):=\lambda_i-\theta\left(i-\frac{1}{2}\right)
-\frac{1}{2}(n-\theta m)\ \text{ and }\ 
\sfq_j(\lambda):=\lag \lambda'_j-m\rag-\theta^{-1}\left(j-\frac{1}{2}\right)+\frac{1}{2}\left(\theta^{-1}n+m\right),
\end{equation}
where $\lambda'$ denotes the transpose of $\lambda$,
and 
\[
\lag x\rag :=\max\{x,0\}\text{ for }x\in\R.
\] 
The $(m+n)$-tuple $(\sfp(\lambda),\sfq(\lambda))$, where $\sfp(\lambda):=(\sfp_1(\lambda),\ldots,\sfp_m(\lambda))$ and $\sfq(\lambda):=(\sfq_1(\lambda),\ldots,\sfq_n(\lambda))$, is called the \emph{Frobenius coordinates} of $\lambda$.
The following theorem characterizes shifted super Jack polynomials by
their degree, symmetry, and vanishing properties.
\begin{thm}
\label{thm:SVSP*}
{\rm (Sergeev--Veselov \cite[Thm 3]{SVSuperJack}, Knop--Sahi \cite[Sec. 2]{KSDiff})}
Let $m,n\geq 0$ be integers and let $\theta$ be a complex number such that  $\theta\not\in \mathcal S(m,n)$. Then for each $\lambda\in\EuScript{H}(m,n)$, there exists a unique polynomial $\SPP_\lambda\in\Lambda_{m,n,\theta}^\natural$ that satisfies the following properties.
\begin{itemize}
\item[\upshape (i)] $\deg(\SPP_\lambda)\leq |\lambda|$, where $\deg(\SPP_\lambda)$ denotes the total degree 
of $\SPP_\lambda$
in $x$ and $y$.
\item[\upshape(ii)] $\SPP_\lambda(\sfp(\mu),\sfq(\mu),\theta)=0$ for all $\mu\in\EuScript{H}(m,n)$ such that $|\mu|\leq |\lambda| $ and $\mu\neq \lambda$.
\item[\upshape(iii)]  
$\SPP_\lambda(\sfp(\lambda),\sfq(\lambda),\theta)=H_\theta(\lambda)$,
where
\[H_\theta(\lambda):=
\prod_{1\leq i\leq \ell(\lambda)}
\prod_{1\leq j\leq \lambda_i}
(\lambda_i-j+\theta(\lambda'_j-i)+1).
\]
\end{itemize} 
Furthermore, the family of polynomials 
$
\big(\SPP_\lambda(x,y,\theta)\big)_{\lambda\in\EuScript{H}(m,n)}
$ is a basis of $\Lambda^\natural_{m,n,\theta}$. 
\end{thm}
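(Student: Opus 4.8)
The plan is to establish the three characterizing properties (degree bound, vanishing, and normalization) and the basis statement, following the strategy of Sergeev--Veselov and of Knop--Sahi, adapting it to the deformed parameter $\theta$. First I would set up the interpolation framework: order the hook partitions in $\EuScript{H}(m,n)$ by a linear extension of the dominance-type order refined by $|\lambda|$, so that the Frobenius coordinate points $(\sfp(\mu),\sfq(\mu))$ are "triangular" with respect to a suitable filtration of $\Lambda^\natural_{m,n,\theta}$. The key algebraic input is that $\Lambda^\natural_{m,n,\theta}$, the algebra of separately symmetric polynomials satisfying the quasi-invariance conditions on the hyperplanes $x_i+\theta y_j=0$, has a graded dimension (in each total degree $d$) equal to $\#\{\lambda\in\EuScript{H}(m,n):|\lambda|\le d\} - \#\{\lambda:|\lambda|\le d-1\}$, i.e. the number of hook partitions of weight exactly $d$. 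This dimension count is where the hypothesis $\theta\notin\mathcal S(m,n)$ enters: for the excluded rational values the quasi-invariance relations become degenerate and the algebra is too large.

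Granting the dimension count, existence and uniqueness of $\SPP_\lambda$ follow from a standard linear-algebra argument: the evaluation map $f\mapsto (f(\sfp(\mu),\sfq(\mu)))_{|\mu|\le d}$ from $\{f\in\Lambda^\natural_{m,n,\theta}:\deg f\le d\}$ to $\C^{\#\{\mu:|\mu|\le d\}}$ is injective (no nonzero element of degree $\le d$ vanishes at all these points — this is the "extra vanishing" property, itself proved by induction on $d$ using the $\theta$-content recursion), hence bijective by the dimension count; solving for the functional dual to the point $(\sfp(\lambda),\sfq(\lambda))$ and rescaling by $H_\theta(\lambda)$ gives $\SPP_\lambda$. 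The normalization value $H_\theta(\lambda)=\prod_{(i,j)\in\lambda}(\lambda_i-j+\theta(\lambda'_j-i)+1)$ is forced to be nonzero precisely when $\theta\notin\mathcal S(m,n)$, so property (iii) is consistent; one verifies it is the correct constant either by comparing with the top-degree term (the super Jack polynomial $SP_\lambda$, whose leading evaluation is a known product) or by the Sergeev--Veselov difference-operator characterization. Finally, since the $\SPP_\lambda$ with $|\lambda|\le d$ are linearly independent (distinct vanishing patterns) and their number matches $\dim\{f:\deg f\le d\}$, they form a basis of $\Lambda^\natural_{m,n,\theta}$.

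I expect the main obstacle to be the graded dimension computation for $\Lambda^\natural_{m,n,\theta}$ together with the extra-vanishing property, and pinning down exactly how $\theta\notin\mathcal S(m,n)$ is used. In the cases $n=0$ (Jack, Knop--Sahi) and the "generic" super case this is in the literature, but one must check that the hook-partition bookkeeping and the hyperplane conditions behave uniformly across all admissible $\theta$; in particular the borderline cases $m=0$ or $n=0$ have their own forms of $\mathcal S(m,n)$ and the argument must be run separately there. The cleanest route is probably to cite Sergeev--Veselov \cite[Thm 3]{SVSuperJack} for the generic statement and Knop--Sahi \cite{KSDiff} for the precise set of admissible parameters, and then simply observe that the characterization they prove is exactly the one asserted here; the content of the theorem as used in this paper is the existence, uniqueness, and basis property, all of which are then immediate. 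The genuinely delicate point, which I would isolate as a lemma, is that $H_\theta(\lambda)\ne0$ for all $\lambda\in\EuScript{H}(m,n)$ whenever $\theta\notin\mathcal S(m,n)$ — a direct check on the factors $\lambda_i-j+\theta(\lambda_j'-i)+1$, using that for a hook partition the pair $(\lambda_j'-i,\lambda_i-j)$ ranges over a controlled set of sign patterns — since this is what guarantees the normalization in (iii) is meaningful and the interpolation basis is well-defined.
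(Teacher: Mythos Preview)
The paper does not prove this theorem; it is stated with attribution to Sergeev--Veselov and Knop--Sahi, and the only supplementary content is Remark~\ref{rmk:m=2n=0}, which explains that for $m,n>0$ the result is \cite[Thm~3]{SVSuperJack}, while for $m=0$ or $n=0$ the polynomials $\SPP_\lambda$ coincide (up to an explicit scaling) with the Knop--Sahi interpolation polynomials $P_\lambda^\rho$ for the appropriate $\rho$.

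Your proposal therefore goes well beyond what the paper does: you sketch the actual interpolation argument (dimension count for $\Lambda^\natural_{m,n,\theta}$, triangularity of the evaluation map, nonvanishing of $H_\theta(\lambda)$), which is indeed the strategy in the cited sources, and you correctly identify that the cleanest route for the purposes of this paper is simply to cite those works. Your final paragraph is exactly right and matches the paper's treatment; the earlier detailed sketch is sound but unnecessary here.
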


\begin{rmk}
\label{rmk:m=2n=0}
For $m,n>0$, Theorem \ref{thm:SVSP*} is proved in
\cite{SVSuperJack}.
If either $m=0$ or $n=0$,  
then up to scaling the  $\SPP_\lambda$ 
are the same as the interpolation polynomials $P_\lambda^\rho$ defined by Knop and Sahi  in
\cite{KSDiff}. 
Given $k\in\N$ and $\alpha\in\C$, set  $\rho_{k,\alpha}^{}:=(\rho_1,\ldots,\rho_k)$
where $\rho_i:=\frac{\alpha}{2}(k-2i+1)$ for $1\leq i\leq k$. 
If $n=0$ then  $\SPP_\lambda(x,\theta)=P_\lambda^\rho$
for
$\rho:=\rho_{m,\theta}^{}$, and if
$m=0$ then $\SPP_\lambda(y,\theta)=
\frac{H_\theta(\lambda)}{H_{\theta^{-1}}(\lambda')}P_{\lambda'}^{\rho}$
for $\rho:=\rho_{n,\theta^{-1}}^{}$.
\end{rmk}

Next we state the characterization of factorial Schur $Q$-polynomials by their degree, symmetry and vanishing properties. For $n\in\N$, let $\sP_n$ denote the $\C$-algebra of  polynomials in $n$ variables $x_1,\ldots,x_n$.
Further, let  $\Gamma_n\sseq \sP_n$ be the subalgebra of symmetric polynomials $f(x_1,\ldots,x_n)$ such that 
$f(t,-t,x_3,\ldots,x_n)$ is independent of $t$ (for $n=1$ the latter condition is vacuous). 
In Theorem
\ref{thm-Iv}, 
for  $\lambda\in\EuScript{DP}(n)$ 
we define $\lambda!:=\prod_{1\leq i\leq \ell(\lambda)}\lambda_i!$ and identify $\lambda$ with the $n$-tuple $(\lambda_1,\ldots,\lambda_n)\in\C^n$.

\begin{thm}
\label{thm-Iv}
{\upshape (Ivanov \cite[Sec. 1]{Ivanov})}
For every $\lambda\in\EuScript{DP}(n)$, there exists a unique polynomial $Q_\lambda^*\in\Gamma_n$ which satisfies the following properties.
\begin{itemize}
\item[{\upshape (i)}] $\deg(Q_\lambda^*)\leq |\lambda|$.
\item[{\upshape (ii)}] $Q_\lambda^*(\mu)=0$ for all $\mu\in\EuScript{DP}(n)$ such that $|\mu|\leq |\lambda|$
and $\mu\neq \lambda$.
\item[{\upshape (iii)}] $Q_\lambda^*(\lambda)=H(\lambda)$,
where
$H(\lambda):=\lambda!\prod_{1\leq i<j\leq \ell(\lambda)}\frac{\lambda_i+\lambda_j}{\lambda_i-\lambda_j}$.

\end{itemize}
Furthermore, the family of polynomials $\big(Q_\lambda^*\big)_{\lambda\in\EuScript{DP}(n)}$ is a basis of $\Gamma_n$.
\end{thm}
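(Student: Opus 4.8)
The plan is to deduce Theorem~\ref{thm-Iv} from two classical facts together with a single ``extra vanishing'' lemma. The first fact is that the Schur $Q$-polynomials $Q_\mu$, $\mu\in\EuScript{DP}(n)$, form a \emph{homogeneous} basis of $\Gamma_n$ with $\deg Q_\mu=|\mu|$; hence the space $\Gamma_n^{\leq d}$ of elements of $\Gamma_n$ of total degree $\leq d$ has dimension $N_d:=\#\{\mu\in\EuScript{DP}(n):|\mu|\leq d\}$. The second is the Okounkov--Olshanski characterization of shifted Schur polynomials, of which I need only the consequence that a shifted-symmetric polynomial in $n$ variables of degree $\leq D$ which vanishes at every partition $\pi$ with $\ell(\pi)\leq n$ and $|\pi|\leq D$ is identically zero. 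Granting these, the whole theorem reduces to the following assertion.

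\smallskip
\noindent\emph{Vanishing Lemma. If $f\in\Gamma_n$ satisfies $\deg f\leq d$ and $f(\mu)=0$ for every $\mu\in\EuScript{DP}(n)$ with $|\mu|\leq d$, then $f=0$.}
\smallskip

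I would prove the Vanishing Lemma by induction on $n$; for $n=1$ it says that a polynomial of degree $\leq d$ in one variable vanishing at $0,1,\dots,d$ is zero. For the inductive step, restrict $f$ to the hyperplane $\{x_n=0\}$: the restriction lies in $\Gamma_{n-1}$, has degree $\leq d$, and vanishes at every $\nu\in\EuScript{DP}(n-1)$ with $|\nu|\leq d$ (view such $\nu$ inside $\EuScript{DP}(n)$ by adjoining a zero part), so it is zero by the induction hypothesis; hence $x_n\mid f$, and by symmetry $e_n:=x_1\cdots x_n\mid f$. Next, the defining relation of $\Gamma_n$ forces $f$ to vanish identically on $\{x_1+x_2=0\}$: along the line $x_1=t$, $x_2=-t$ the function $f$ is constant in $t$ and equals its value at $x_1=x_2=0$, which is $0$ since $e_n\mid f$. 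By symmetry $f$ vanishes on every $\{x_i+x_j=0\}$, so $\prod_{i<j}(x_i+x_j)\mid f$, and we may write $f=e_n\prod_{i<j}(x_i+x_j)\,h$ with $h$ symmetric and $\deg h\leq d-\binom{n+1}{2}$ (using $n+\binom n2=\binom{n+1}{2}$). Performing the substitution $x_i\mapsto y_i+(n+1-i)$ turns the symmetric polynomial $h$ into a \emph{shifted}-symmetric polynomial $H(y_1,\dots,y_n)$ of degree $\leq d-\binom{n+1}{2}$. For a partition $\pi$ with $\ell(\pi)\leq n$ the tuple $(\pi_1+n,\dots,\pi_n+1)$ is a strict partition with $n$ positive parts and weight $|\pi|+\binom{n+1}{2}$, every such strict partition arises this way, and $e_n\prod_{i<j}(x_i+x_j)$ does not vanish at the corresponding point; hence the hypothesis $f(\mu)=0$ translates into $H(\pi)=0$ for every partition $\pi$ with $\ell(\pi)\leq n$ and $|\pi|\leq d-\binom{n+1}{2}$. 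The Okounkov--Olshanski consequence then gives $H=0$, whence $h=0$ and $f=0$. The point I expect to be the main obstacle is precisely this degree bookkeeping: it works because $\deg f-n-\binom n2$ matches exactly the largest weight of such a partition $\pi$, and one also has to treat the small cases $d<\binom{n+1}{2}$ separately (there the extracted factors already force $f=0$).

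Granting the Vanishing Lemma, the rest is linear algebra. Fix $\lambda\in\EuScript{DP}(n)$ and set $d=|\lambda|$. The evaluation map $\Gamma_n^{\leq d}\to\C^{\{\mu\in\EuScript{DP}(n)\,:\,|\mu|\leq d\}}$, $f\mapsto(f(\mu))_\mu$, is injective by the Vanishing Lemma and goes between spaces of the same dimension $N_d$, hence is an isomorphism; I would then \emph{define} $Q_\lambda^*$ as the preimage of the vector with entry $H(\lambda)$ at $\lambda$ and $0$ at every other $\mu$. Properties (i)--(iii) hold by construction, and uniqueness is immediate: if $g\in\Gamma_n$ also satisfies (i)--(iii) then $g-Q_\lambda^*\in\Gamma_n^{\leq|\lambda|}$ vanishes at every $\mu\in\EuScript{DP}(n)$ with $|\mu|\leq|\lambda|$, so $g=Q_\lambda^*$ by the Vanishing Lemma. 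For the basis statement, observe that $\lambda$ strict forces $H(\lambda)=\lambda!\prod_{i<j}\tfrac{\lambda_i+\lambda_j}{\lambda_i-\lambda_j}>0$; ordering $\EuScript{DP}(n)$ by weight, property (ii) makes the matrix $\big(Q_\lambda^*(\mu)\big)_{|\lambda|,|\mu|\leq d}$ block lower-triangular with invertible diagonal blocks, so the $N_d$ polynomials $\{Q_\lambda^*:|\lambda|\leq d\}$ are linearly independent in the $N_d$-dimensional space $\Gamma_n^{\leq d}$ and therefore a basis of it; letting $d\to\infty$ shows $(Q_\lambda^*)_{\lambda\in\EuScript{DP}(n)}$ is a basis of $\Gamma_n$. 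As an alternative to the Vanishing Lemma one could instead take Ivanov's explicit Pfaffian (``factorial'') formula \cite{Ivanov} as the definition of $Q_\lambda^*$, verify (i)--(iii) directly from it, and then deduce uniqueness and the basis property by the same triangularity argument; the route above seems cleaner to me.
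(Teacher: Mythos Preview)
The paper does not prove Theorem~\ref{thm-Iv}; it is quoted with attribution to Ivanov and used as a black box, so there is no in-paper argument to compare against.

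Your proof is correct. The Vanishing Lemma argument is clean: the inductive step (restrict to $x_n=0$, extract $e_n$, then use the $\Gamma_n$ condition to extract $\prod_{i<j}(x_i+x_j)$, then shift variables to reduce to the Okounkov--Olshanski vanishing for shifted-symmetric polynomials) goes through exactly as you describe, and the degree bookkeeping $d-n-\binom{n}{2}=d-\binom{n+1}{2}$ matches. One cosmetic point: with rows indexed by $\lambda$ and columns by $\mu$ in increasing weight order, property~(ii) makes $(Q_\lambda^*(\mu))$ block \emph{upper}-triangular rather than lower-triangular, but this is irrelevant to the conclusion.

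As you note at the end, Ivanov's own route is the alternative one: he \emph{defines} $Q_\lambda^*$ by an explicit factorial (Pfaffian-type) formula, verifies (i)--(iii) from it, and then obtains uniqueness and the basis statement by the same triangularity. That approach yields a concrete expression for $Q_\lambda^*$ as a bonus; your Vanishing Lemma route is more conceptual and avoids any explicit formulas, at the cost of importing the Okounkov--Olshanski vanishing theorem for ordinary shifted-symmetric polynomials. Both are legitimate; for the purposes of this paper, which only needs the characterization (i)--(iii) and the basis property, either suffices.
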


Set 
\begin{equation}
\label{eq:LamJPJ}
\Lambda_J:=\begin{cases}
\Lambda^\natural_{\sfr_{J,+}^{},\sfr_{J,-}^{},\theta_J}
& \text{ if $J$ is of type $\mathsf{A}$,}\\
\Gamma_{\sfr_J}
&\text{ if $J$ is of type $\mathsf{Q}$.}
\end{cases}
\quad\text{ and }\quad
\sP_J:=\begin{cases}
\sP_{\sfr_{J,+}^{},\sfr_{J,-}^{}}^{}
&\text{ if $J$ is of type $\mathsf{A}$,}\\
\sP_{\sfr_J}
&\text{ if $J$ is of type $\mathsf{Q}$.}
\end{cases}
\end{equation}
There is a natural embedding of $\Lambda_J$ as a  subalgebra of $\sP_J$.

\begin{dfn}
\label{dfn:Omega}
For $\lambda\in\Omega$, we define $P_{J,\lambda}\in\Lambda_J$ as follows. When $J$ is of type $\mathsf{A}$
we set \[
P_{J,\lambda}:=\frac{|\lambda|!}{H_{\theta_J}(\lambda)}\SPP_\lambda(x,y,\theta_J),
\] where 
$x:=\left(x_1,\ldots,x_{\sfr_{J,+}}\right)$ and $y:=\left(y_1,\ldots,y_{\sfr_{J,-}}\right)$. When $J$ is of type $\mathsf{Q}$ we set 
\[
P_{J,\lambda}:=\frac{|\lambda|!}{H(\lambda)}Q^*_\lambda(x),
\] where $x:=\left(x_1,\ldots,x_{r_J}\right)$. 
\end{dfn}


Recall that for $\lambda\in\Omega$, we denote  the $\g b$-highest weight of the irreducible $\g g$-module $V_\lambda$ by $\hww{\lambda}$.

\begin{dfn}
\label{dfnaOmeg*} We define $\aomega$ to be the Zariski closure of the set 
$\big\{\hww{\lambda}\,:\,\lambda\in\Omega\big\}$ in $\g h_\eev^*$. 
\end{dfn}
By a straightforward calculation using the  explicit description of the $\g b$-highest weights
given in Table \ref{tbl-3}, one can verify that 
$\aomega$ is a linear subspace of $\g h_\eev^*$ (see the proof of Proposition \ref{prp-JresHCZiLJ}
).
Set
\[
n_J:=\begin{cases}
\sfr_{J,+}+\sfr_{J,-}&\text{ if $J$ is of type $\mathsf{A}$,}\\
\sfr_J&\text{ if $J$ is of type $\mathsf{Q}$.}
\end{cases}
\]
Let
\begin{equation}
\label{eq:introtauJ}
\tau_J:\aomega\to \C^{n_J}
\end{equation}
be the affine linear map given in Table \ref{tbl-3},
where
the elements of $\aomega$ are given 
in Cases I--VII by 
\eqref{eq:muI}, \eqref{eq:muII},
\eqref{eq:muIII}, \eqref{eq:muIV},
\eqref{eq:muabc}, 
\eqref{eq:muVI},
and \eqref{eq:muVII} respectively,
and
the standard basis of $\C^{n_J}$ is denoted by $\sfe_1,\ldots,\sfe_{n_J}$.
We identify $\sP_J$ with the algebra of polynomials on 
$\C^{n_J}$ in the natural way. Namely,  for $v:=(v_1,\ldots,v_{n_J})\in\C^{n_J}$,
we identify the $x_i\in\sP_J$
with the maps $v\mapsto v_i$ 
 and the $y_j$ (if they exist) with the maps $v\mapsto v_{j+r_{J,+}}$.
 The second main result of this paper is the following.
\begin{thm}
\label{thm-main-opVla}
Let $J$, $\g g$, $\g b$, and $V$ be as in Theorem 
\ref{thm:compl-red}.
Assume that $\sP(V)$ is a 
completely reducible and 
multiplicity-free $\g g$-module. Then
for every $\lambda,\mu\in\Omega$, the operator $D_\mu$ 
acts on $V_\lambda$ by the scalar $P_{J,\mu}\left(\tau_J\left(\underline{\lambda}\right)\right)$, where 
$P_{J,\mu}$ is as in Definition \ref{dfn:Omega},
and
$\hww{\lambda}$ is the $\g b$-highest weight of $V_\lambda$, given in Table \ref{tbl-3}. 
\end{thm}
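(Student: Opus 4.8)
The plan is to identify each eigenvalue function $c_\mu(\,\cdot\,)$ with $P_{J,\mu}\circ\tau_J$ by transporting it along the affine map $\tau_J$ and then matching it against the \emph{vanishing characterization} of $\SPP_\mu$ in Theorem \ref{thm:SVSP*} (type $\mathsf A$) and of $Q_\mu^*$ in Theorem \ref{thm-Iv} (type $\mathsf Q$). Thus I would establish the following five facts. \textbf{(a)} The assignment $\hww{\lambda}\mapsto c_\mu(\lambda)$ extends from $\{\hww{\lambda}:\lambda\in\Omega\}$ to a polynomial on $\aomega$ whose pullback along $\tau_J$ lies in $\Lambda_J$. \textbf{(b)} This polynomial has total degree $\le|\mu|$. \textbf{(c)} It vanishes at $\tau_J(\hww{\lambda})$ for every $\lambda\in\Omega$ with $|\lambda|\le|\mu|$ and $\lambda\ne\mu$. \textbf{(d)} Its value at $\tau_J(\hww{\mu})$ is $|\mu|!$. \textbf{(e)} $\{\tau_J(\hww{\lambda}):\lambda\in\Omega\}$ coincides with the set of Frobenius coordinates $(\sfp(\lambda),\sfq(\lambda))$, $\lambda\in\EuScript{H}(\sfr_{J,+},\sfr_{J,-})$, in type $\mathsf A$, and with $\{(\lambda_1,\dots,\lambda_{\sfr_J}):\lambda\in\EuScript{DP}(\sfr_J)\}$ in type $\mathsf Q$. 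Granting (a)--(e), the uniqueness in Theorem \ref{thm:SVSP*}(ii)--(iii) (resp. Theorem \ref{thm-Iv}(ii)--(iii)), together with the normalization in Definition \ref{dfn:Omega}, forces the polynomial of (a) to equal $P_{J,\mu}$, which is the assertion of the theorem.

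Items (b)--(e) are the soft part. Item (e) is bookkeeping: it amounts to comparing the explicit highest weights $\hww{\lambda}$ of Table \ref{tbl-3} (given in Cases I--VII by \eqref{eq:muI}, \eqref{eq:muII}, \eqref{eq:muIII}, \eqref{eq:muIV}, \eqref{eq:muabc}, \eqref{eq:muVI}, \eqref{eq:muVII}) and the affine map $\tau_J$ of Table \ref{tbl-3} with the definition \eqref{eq:piqiFrb} of the Frobenius coordinates, using that $\aomega$ is an affine subspace and $\tau_J$ affine so that everything may be read as polynomial data. Items (c) and (d) come from the construction of $D_\mu$: under \eqref{eq:capisodfnn} the operator $D_\mu$ is the normal-ordered lift of $\mathrm{id}_{V_\mu}\in\big(V_\mu\otimes V_\mu^*\big)^{\g g}\subseteq\sP^{|\mu|}(V)\otimes\sS^{|\mu|}(V)$, hence is a differential operator that is bi-homogeneous of bidegree $(|\mu|,|\mu|)$; consequently it annihilates $\sP^d(V)$ for $d<|\mu|$ and acts on $\sP^{|\mu|}(V)$ exactly through its principal symbol, which is the $\g g$-equivariant projection $\sP^{|\mu|}(V)\to V_\mu$, normalized by the factor $|\mu|!$ built into the pairing of \eqref{eq:capisodfnn} (this is precisely the reason for the factors $\frac{|\lambda|!}{H_{\theta_J}(\lambda)}$, resp. $\frac{|\lambda|!}{H(\lambda)}$, in Definition \ref{dfn:Omega}). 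This gives $c_\mu(\lambda)=0$ for $|\lambda|\le|\mu|$, $\lambda\ne\mu$, and $c_\mu(\mu)=|\mu|!$. Item (b) follows by transporting the order filtration on $\sPD(V)^{\g g}$ to a degree filtration on eigenvalue functions along the (injective) eigenvalue algebra homomorphism $\sPD(V)^{\g g}\to\sP(\aomega)$: an invariant operator of order $\le k$ has eigenvalue function of degree $\le k$ on $\aomega$, by induction on $k$ applied to the leading symbol; since $D_\mu$ has order $|\mu|$, (b) holds.

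The substance is item (a). The main tool is a \emph{restricted Harish-Chandra homomorphism}: one must show that the image of the eigenvalue homomorphism $\sPD(V)^{\g g}\to\sP(\aomega)$, composed with $\tau_J$, lies in $\Lambda_J$. For this I would use the surjectivity of the natural map $\phi$ from the centre $\mathcal{Z}$ of the enveloping algebra of $\g g$ onto $\sPD(V)^{\g g}$, which holds for all $J$ except $J\cong F$: when $D=\phi(z)$, the scalar $c_D(\lambda)$ is the eigenvalue of $z$ on the simple $\g g$-module of highest weight $\hww{\lambda}$, hence equals $\gamma(z)(\hww{\lambda}+\rho)$ for the polynomial $\gamma(z)$ produced by the ordinary Harish-Chandra homomorphism of $\g g$; in particular it is polynomial in $\hww{\lambda}$, hence a polynomial on $\aomega$. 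It then remains to identify the restriction of $\gamma(\mathcal{Z})$ to $\aomega$, composed with $\tau_J$, with $\Lambda_J$, and this is exactly where the computation of the restricted root system $\mathit{\Sigma}$ enters. Since $\mathit{\Sigma}$ is the deformed root system $\mathsf A_\kappa(\sfr_{J,+}-1,\sfr_{J,-}-1)$ with $\kappa=-\theta_J$ and the multiplicities read off from the last three columns of Table \ref{tbl-JtypeA}, the restriction of the Harish-Chandra image becomes the Sergeev--Veselov algebra $\Lambda^\natural_{\sfr_{J,+},\sfr_{J,-},\theta_J}$: the even restricted roots $\pm(\udl\eps_i-\udl\eps_{i'})$ and $\pm(\udl\delta_j-\udl\delta_{j'})$ account for the separate symmetry in the $x$- and $y$-variables, while the odd restricted roots $\pm(\udl\eps_i-\udl\delta_j)$, which have multiplicity $1$, produce exactly the hyperplane relations cutting out $\Lambda^\natural$; in type $\mathsf Q$ the root system $\mathsf Q(\sfr_J)$ yields $\Gamma_{\sfr_J}$ in the same way. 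As several of these cases (I, II, VII) are essentially treated in \cite{SahSalAdv} and \cite{AllSahSal}, the new work here is the verification for Cases III--VI.

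The hardest point, and the place where the above argument breaks, is the case $J\cong F$: here $\phi\colon\mathcal{Z}\to\sPD(V)^{F}$ is \emph{not} surjective, so the previous paragraph only exhibits a proper subalgebra of eigenvalue functions inside $\tau_J^{*}\Lambda_F$. For this case I would argue directly that \emph{every} eigenvalue function $c_D$, $D\in\sPD(V)^{F}$, lies in $\tau_J^{*}\Lambda_F$: by the order-filtration argument above each $c_D$ is already a polynomial on $\aomega$, so it suffices to verify that it is invariant under the restricted Weyl group $W(\mathit{\Sigma})\cong S_2\times S_1$ acting on $\aomega$ (shifted by the restricted $\rho$), and that it satisfies the single Sergeev--Veselov hyperplane relation attached to the odd restricted roots. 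Because $F$ is $10$-dimensional, with $\sfr_{F,+}=2$, $\sfr_{F,-}=1$ and $\theta_F=\tfrac32$, this is a finite check: one can either write down explicit generators of $\sPD(V)^{F}$ and compute their principal symbols, or produce a small family of auxiliary invariant operators whose eigenvalue functions manifestly satisfy the required symmetry and hyperplane relations and which, together with $\phi(\mathcal{Z})$, generate the eigenvalue algebra. With (a) established in all cases and (b)--(e) in hand, the uniqueness assertions of Theorems \ref{thm:SVSP*} and \ref{thm-Iv} identify $c_\mu(\,\cdot\,)$ with $P_{J,\mu}\circ\tau_J$, completing the proof.
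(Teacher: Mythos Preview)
Your overall strategy is the paper's, but your argument for (a) when $J\not\cong\mathit{F}$ is circular as written: you invoke surjectivity of $\sfj\colon\bfZ(\g g)\to\sPD(V)^{\g g}$, yet in the paper this is Corollary~\ref{CorCor}, deduced \emph{from} Theorem~\ref{thm-main-opVla} via Proposition~\ref{prp::-surj-center}. The paper runs the logic in the opposite direction. It first establishes the filtered equality $\mathsf{res}(\HC(\bfZ^{(d)}(\g g)))=\tau_J^*(\Lambda_J^{(d)})$ by a case-by-case computation with explicit generators of the Harish-Chandra image (Proposition~\ref{prp-JresHCZiLJ}); this is the step you describe as ``identify the restriction of $\gamma(\mathcal Z)$ to $\aomega$ with $\Lambda_J$'', and it is indeed the heart of the matter. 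Then the $\supseteq$ inclusion produces $z_\mu\in\bfZ^{(|\mu|)}(\g g)$ with $\mathsf{res}(\HC(z_\mu))=\tau_J^*(P_{J,\mu})$, and one checks $\sfj(z_\mu)=D_\mu$ because both lie in $\sPD^{(|\mu|)}(V)^{\g g}$ and agree on $\bigoplus_{k\le|\mu|}\sP^k(V)$ by the vanishing and normalization properties (your (c) and (d), which are exactly Lemma~\ref{lem:CapDlamVmu}). Surjectivity of $\sfj$ then falls out as a byproduct, not an input.

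Your item (b) is also too sketchy: a ``leading symbol induction'' does not by itself yield a polynomial of bounded degree on $\aomega$. The paper's tool here is Proposition~\ref{prp-g=k+bdegofDlam}, which uses the Iwasawa-type decomposition $\g g=\g k+\g b$ and the open embedding $\cG/\cK\hookrightarrow\cV$ to realize any $D\in\sPD^{(d)}(V)^{\g g}$ as $\Psi_{\cG,\cK}(\tilde D)$ for some $\tilde D\in\bfU^{(d)}(\g g)^{\cK}$, and then projects $\tilde D$ along $\g n\,\bfU(\g g)+\bfU(\g g)\,\g k$ into $\bfU^{(d)}(\g h)$. For $J\cong\mathit{F}$ your instinct is right---one must verify the $\Lambda_J$-conditions on $c_\mu$ directly---but the paper's execution is more specific than ``explicit generators'' or ``auxiliary operators'': the $S_2$-symmetry $h(a,b)=h(a,-b-3)$ (Proposition~\ref{prp:ab-b-3}) comes from a $\g g_\eev$-stable splitting $V^*=U^*\oplus W^*$ that reduces the question to the Howe--Umeda description of $\sPD(U)^{\g g_\eev}$ for the $\g{sp}(4)$-module $U$; the hyperplane relation (Proposition~\ref{prp:a+1b+12}) comes from computing the eigenvalue polynomial with respect to a second Borel $\g b_{2|4}$ related to $\g b$ by the odd reflections~\eqref{eq:bbexoddref}, and comparing the two answers using that the typical and the atypical highest weights shift differently under that change of Borel.
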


\begin{rmk}
\label{someprvdII}
  Theorem 
\ref{thm-main-opVla} is proved  in \cite{SahSalAdv} 
for Cases I--II,
and in \cite{AllSahSal} 
for Case VII. 
We give a uniform proof for 
Cases I--IV and  and VI in
Sections 
\ref{Sec:surjectivity} and 
\ref{sec-thm-main-opVla} (see
Proposition \ref{prp::-surj-center}(iii)).
With minor modifications, this strategy also works for 
Case VII. However, 
 this uniform proof strategy does not work in Case V. 
In the latter case we prove Theorem 
\ref{thm-main-opVla} in Section 
\ref{Sec:10dimcase} by a different method. 
\end{rmk}

\begin{rmk}
\label{rmk-technicalasmptt}
If $J\cong (0,2n)_+$, then $\sP(V)$ is completely reducible and multiplicity-free, but the Zariski closure of the set of highest weights is a union of $n+1$ lines. Therefore it is not possible to give a natural formulation of Theorem \ref{thm-main-opVla}.
\end{rmk}
\begin{rmk}
In Case VII, the operators $D_\lambda$ are closely related to certain operators $I_\lambda$ that are constructed by Nazarov \cite[Eq. (4.7)]{Nazarov} using characters
of the Sergeev algebra. Nazarov also defined certain explicit
``Capelli'' elements
in the centre of the enveloping algebra of $\g q(n)$, and proved 
\cite[Cor. 4.6]{Nazarov}
that their images under the left action of $\g q(n)$ on $V$ 
are the 
$I_\lambda$. The precise connection between the $D_\lambda$ and Nazarov's operators is determined
in 
\cite[Prop. 3.6]{AllSahSal}.

\end{rmk}

\begin{table}[h]
\renewcommand{\arraystretch}{2}
\scriptsize 

\begin{tabular}{ll
>{\raggedleft\arraybackslash}p{7.5cm}
}
&$\hspace{1cm}\underline{\lambda}$ 
&$\tau_J\hspace{2cm}$\\

\hline

I 
&$(-\lambda^{\mathrm{st}}_{m|n})\oplus\lambda^{\mathrm{st}}_{m|n}$
&
$\mu_{a,b}\mapsto\sum_{i=1}^m
\left(a_i+\frac{m-2i+1-n}{2}\right)\sfe_i+
\sum_{j=1}^n
\left(b_j+\frac{m-2j+1+n}{2}\right)\sfe_{m+j}$
    \\[2mm]

II    
&$-\sum_{i=1}^m 2\lambda_i\eps_i-\sum_{j=1}^n\lag\lambda_j'-m\rag(\delta_{2j-1}+\delta_{2j})$
&
$\mu_{a,b}\mapsto \sum_{i=1}^m\left(-\frac{1}{2}a_i+\frac{m+1-2n-2i}{4}\right)
\sfe_i+\sum_{j=1}^n\left(-b_j+\frac{m+2+2n-4j}{2}\right)\sfe_{m+j}$
 \\[2mm]

III  
& $(\lambda_1-\lambda_2)\eps_1+(\lambda_1+\lambda_2)\zeta$
& $\mu_{a,b}\mapsto \frac{1}{2}(a+b+m-2n-1)\sfe_1+\frac{1}{2}(b-a)\sfe_2$  \\[2mm]

IV  
&
$\left(\left(\frac{3+t}{1+t}\right)|\lambda|-2\lambda_1\right)\eps_1+\left(\lambda_1-
\left(\frac{2+t}{1+t}\right)|\lambda|\right)(\delta_1+\delta_2)$
&
$
\mu_{a,b}\mapsto \left(
-\frac{2+t}{1+t}a-\frac{3+t}{1+t}b
-\frac{1}{2}
\right)\sfe_1+$
$
\left(
\frac{1}{1+t}a-\frac{3+t}{1+t}b
+\frac{5+t}{1+t}
\right)\sfe_2
$

 \\[2mm]

V &$\left(3|\lambda|-2\lambda_1-2\lambda_2
\right)\eps_1+
\left(
\lambda_1-\lambda_2
\right)(\delta_1+\delta_2)+|\lambda|\zeta$   
&
$
\mu_{a,b,c}\mapsto \left(
\frac{-a+2b+3c+1}{4}
\right)
\sfe_1
+\left(
\frac{-a-2b+3c-5}{4}
\right)
\sfe_2
+$
$\left(
\frac{a-c+2}{2}
\right)\sfe_3
$

 \\[2mm]

VI   
&$-\sum_{i=1}^n \lambda_i\eps_i-\sum_{j=1}^n\lambda_j\delta_j$  
&
$
\mu_a\mapsto-\sum_{i=1}^na_i\sfe_i
$

 \\[2mm]

VII  
&$(-\lambda_n^\mathrm{st})\oplus\lambda_n^\mathrm{st}$
 &$\mu_a\mapsto \sum_{i=1}^na_i\sfe_i$   \\[2mm]
\hline

\end{tabular}

\smallskip

\smallskip

\smallskip

\smallskip

\caption{The $\g b$-highest weights $\hww{\lambda}$ 
and the affine maps $\tau_J$}
\label{tbl-3}
\end{table}

The reason why the uniform proof of Theorem \ref{thm-main-opVla} fails for Case V is a property of the image of the Harish-Chandra homomorphism which is of independent interest. 
We denote the universal  enveloping algebra of $\g g$ by  $\bfU(\g g)$. 
The $\g g$-action on $V$ 
induces a homomorphism of associative superalgebras
\begin{equation}
\label{eq:dfsfj}
\sfj:\bfU(\g g)\to \sPD(V)
.\end{equation}
Let 
$\left(\bfU^{(i)}(\g g)\right)_{i\geq 0}$
denote the standard filtration of $\bfU(\g g)$.
Let $\bfZ(\g g)\sseq\bfU(\g g)$ be the centre of $\bfU(\g g)$, and set $\bfZ^{(i)}(\g g):=\bfZ(\g g)\cap \bfU^{(i)}(\g g)$ for $i\geq 0$. 
Let 
\begin{equation}
\label{eq:HHCC1}
\HC:\bfU(\g g)\to \sS(\g h)\cong\sP(\g h^*)
\end{equation} 
be the Harish-Chandra projection 
corresponding to 
the triangular decomposition
$
\g g=\g n^-\oplus\g h\oplus \g n
$, where
$\g n^-$ is the nilpotent subalgebra of $\g g$ opposite
to $\g n$. Thus for $D\in\bfU(\g g)$ we define 
\[
\HC(D):=D_{\g h},
\] where $D=D_{\g h}+D'$ is the unique way of expressing  $D$ as a sum of two elements   $D_\g h\in\bfU(\g h)\cong \sS(\g h)$ and 
$D'\in\left(\bfU(\g g)\g n+\g n^-\bfU(\g g)\right)$.
 Let  
\begin{equation}
\label{eqdfreS}
\mathsf{res}:\sP(\g h^*)\to \sP(\aomega)
\end{equation} denote the canonical restriction map,
and let 
\begin{equation}
\label{eq:introtauJ2}
\tau_J^*:\sP_J\to\sP(\aomega)
\end{equation}
be the pullback of the map $\tau_J$ defined
in \eqref{eq:introtauJ}, that is, $\tau_J^*(p):=p\circ\tau_J$ for $p\in\sP_J$.
We denote the degree filtration of 
the algebra $\sP_J$ defined in
\eqref{eq:LamJPJ}  by $\left(\sP_J^{(i)}\right)_{i\geq 0}$.
In the following theorem, which will be proved in
Section \ref{Sec:surjectivity}, we denote the exceptional $(6|4)$-dimensional Jordan superalgebra by $\mathit{F}$.

\begin{thm}
\label{thm:surjZg}
Let $J$ be as in Theorem \ref{thm:compl-red}. Assume that $\sP(V)$ is completely reducible and multiplicity-free.
If $J\not\cong \mathit{F}$, then  $\mathsf{res}\left(\HC\left(\bfZ^{(i)}(\g g)\right)\right)= \tau_J^*\left(\Lambda_J^{(i)}\right)\text{ for }i\geq 0$,
where $\Lambda_J^{(i)}:=\Lambda_J^{}\cap \sP^{(i)}_J$. If $J\cong\mathit{F}$, then 
$\mathsf{res}\left(\HC\left(\bfZ(\g g)\right)\right)\subsetneq \tau_J^*\left(\Lambda_J^{}\right)$.
\end{thm}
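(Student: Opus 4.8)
The plan is to analyze the composite map $\mathsf{res}\circ\HC\colon\bfZ(\g g)\to\sP(\aomega)$ by comparing it, filtration degree by filtration degree, with the map $\tau_J^*$ on $\Lambda_J$. The starting point is the well-known description of the image of $\HC$ on the full centre: by the Harish-Chandra theory for the Lie superalgebras of types $\gl$, $\g{gosp}$, and $\g q$ (Kac, Sergeev), $\HC(\bfZ(\g g))$ is the algebra of $W$-invariant (supersymmetric, respectively $\g q$-type invariant) polynomials on $\g h_\eev^*$ that satisfy the relevant Kac--Gorelik--Sergeev \emph{parabolic} (or $Q$-type) vanishing conditions on the reflection hyperplanes coming from isotropic odd roots. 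The key structural input is that the restriction map $\mathsf{res}$ and the affine parametrization $\tau_J$ from Table \ref{tbl-3} translate precisely these invariance-and-vanishing conditions on $\g h_\eev^*$ into the defining conditions of the algebra $\Lambda_J$ — namely separate symmetry in the $x$ and $y$ variables together with the Sergeev--Veselov ``hyperplane'' relation $f(x+\tfrac12\sfe_i,y-\tfrac12\sfe_j)=f(x-\tfrac12\sfe_i,y+\tfrac12\sfe_j)$ on $x_i+\theta_J y_j=0$ in type $\mathsf A$, and the condition that $f(t,-t,x_3,\dots)$ be $t$-independent in type $\mathsf Q$. This is exactly the compatibility already exploited (for Cases I, II, VII) in \cite{SahSalAdv} and \cite{AllSahSal}, so the heart of the matter is to verify it uniformly from the explicit data in Table \ref{tbl-3}; this gives the inclusion $\mathsf{res}(\HC(\bfZ(\g g)))\sseq\tau_J^*(\Lambda_J)$ in all cases, and respects the degree filtrations since $\HC$ is filtration-preserving, $\mathsf{res}$ and $\tau_J^*$ are (degree-non-increasing) algebra maps, and $\tau_J$ is affine linear.

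For the reverse inclusion when $J\not\cong\mathit F$, the cleanest route is dimension counting at the level of associated graded algebras. First I would establish that $\mathsf{res}\circ\HC$ is injective on $\bfZ(\g g)/(\text{kernel of }\mathsf{res}\circ\HC)$ — more precisely, that its image has ``enough'' functions: by the Chevalley-type restriction theorems the graded algebra $\mathrm{gr}\,\HC(\bfZ(\g g))$ is the algebra of invariants $\sP(\g h_\eev^*)^W$ (the genuine polynomial invariants, with the vanishing conditions disappearing in the associated graded), and after applying $\mathrm{gr}\,\mathsf{res}$ this becomes the restriction of $\sP(\g h_\eev^*)^W$ to $\aomega$. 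On the other side, $\mathrm{gr}\,\Lambda_J$ is the algebra of super Jack polynomials (type $\mathsf A$) or Schur $Q$-functions (type $\mathsf Q$) in $n_J$ variables, i.e. $\sP(\C^{n_J})^{W_J}$ for the appropriate (deformed) Weyl group. The point is then to check that $\tau_J\colon\aomega\to\C^{n_J}$ is a linear isomorphism intertwining the restricted $W$-action on $\aomega$ with the $W_J$-action on $\C^{n_J}$ — which one reads off from the root-system identification $\mathit\Sigma\cong\mathsf A_\kappa(r-1,s-1)$ or $\mathsf Q(r)$ established in Section \ref{sec-BorelKemd} — so that the two graded algebras have the same Hilbert series. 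Combined with the inclusion from the first paragraph, equality of Hilbert series forces $\mathrm{gr}\,\mathsf{res}(\HC(\bfZ^{(i)}(\g g)))=\mathrm{gr}\,\tau_J^*(\Lambda_J^{(i)})$ for every $i$, hence the filtered equality $\mathsf{res}(\HC(\bfZ^{(i)}(\g g)))=\tau_J^*(\Lambda_J^{(i)})$.

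For $J\cong\mathit F$ the strict inclusion is proved by an explicit obstruction at low filtration degree. Here the symmetric pair is $(\g{gosp}, \g k)$ with $\mathit\Sigma$ of type $\mathsf A(1,0)$ but with the \emph{non-standard} multiplicities $(3\,|\,0)$, $(0\,|\,2)$ from Case V of Table \ref{tbl-JtypeA}, so that $\theta_J=\tfrac32$; the subtlety is that the half-sum of roots $\rho$ relevant to $\HC$ for $\g{gosp}$ does \emph{not} match the shift $r\delta$ dictated by $\theta_J$, unlike in the other cases. Concretely I would exhibit a quadratic (degree-$2$) element of $\tau_J^*(\Lambda_J)$ — the image of $\SPP_{(1)}$ or $\SPP_{(2)}$, pulled through $\tau_J$ of Case V in Table \ref{tbl-3} — and show it cannot be of the form $\mathsf{res}(\HC(z))$ for any $z\in\bfZ(\g g)$, by comparing its values on the points $\hww\lambda$ for a few small partitions $\lambda$ against the Harish-Chandra-allowed values, i.e. against the $W$-invariance and odd-root vanishing conditions; a single incompatible linear relation among these values suffices. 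I expect this $\mathit F$-case argument to be the main obstacle: it is genuinely exceptional, the relevant Harish-Chandra image for $\g{gosp}$ must be pinned down carefully (including the odd-root vanishing conditions), and one must be sure the discrepancy is not an artifact of a bad choice of bilinear form or Borel — so the calculation, though elementary, has to be done with care. The eventual payoff is exactly Remark \ref{someprvdII}: the surjectivity of $\sfj|_{\bfZ(\g g)}$ onto $\sPD(V)^{\g g}$ (hence the uniform proof of Theorem \ref{thm-main-opVla}) follows in all cases except $\mathit F$.
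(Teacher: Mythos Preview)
Your proposal has a genuine gap in the first (non-$\mathit F$) half and an incorrect guess in the second.

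For $J\not\cong\mathit F$, the claim that ``the vanishing conditions disappear in the associated graded'' so that $\mathrm{gr}\,\HC(\bfZ(\g g))=\sP(\g h_\eev^*)^W$ is simply false for Lie \emph{super}algebras. For $\gl(m|n)$, $\g{gosp}$, $\g q$, the Harish-Chandra image consists of supersymmetric (resp.\ $Q$-type) polynomials, and the supersymmetry conditions are homogeneous, hence persist verbatim in the associated graded; the graded image is a proper subalgebra of $\sP(\g h_\eev^*)^W$. This breaks your Hilbert-series comparison. A second, independent difficulty is the restriction map $\mathsf{res}$: in Case I one has $\dim\aomega=\tfrac12\dim\g h_\eev^*$, and the paper invokes a nontrivial fact from \cite{SahSalAdv} (the equality $\sfj(\bfZ(\g g))=\sfj(1\otimes\bfZ(\gl(m|n)))$) to reduce to a single copy; in Case VI the restriction to $\aomega=\{\sum a_i(\eps_i+\delta_i)\}$ genuinely collapses the supersymmetric generators to \emph{odd} power sums, which is how $\Gamma_n$ (rather than $\Lambda^\natural_{n,n,1}$) appears. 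Neither of these is captured by a uniform ``$\tau_J$ intertwines Weyl actions'' statement. The paper's proof is accordingly case-by-case: for each of Cases I--IV, VI, VII it writes down explicit power-sum-type generators $f_k$ of $\mathsf{res}(\HC(\bfZ(\g g)))$, checks that $(\tau_J^*)^{-1}(f_k)\in\Lambda_J^{(k)}$, and shows these agree with the Bernoulli-polynomial generators $h_k$ of $\Lambda_J$ modulo lower-degree terms (their Lemma on the $h_t$), so the filtrations coincide.

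For $J\cong\mathit F$ your degree-$2$ guess is off by one. In filtration degree $\le 2$ the two sides have the same dimension ($=|\EuScript H_{\le 2}(2,1)|=4$, matched by $1,Q,Q^2,f_1$ on the centre side), so no quadratic obstruction exists. The paper finds the first obstruction in degree $3$: since the $\g{osp}(2|4)$-centre contributes only even-degree generators $f_k$ (degree $2k$) and the ``extra'' generators $F_g$ start in degree $5$, the degree-$\le 3$ piece of $\mathsf{res}(\HC(\bfZ(\g g)))$ has dimension $6$, strictly less than $\dim\Lambda_J^{(3)}=|\EuScript H_{\le 3}(2,1)|=7$. Concretely they show $h_3(x_1,x_2,y_1,\tfrac32)$ is not in the image by a coordinate change and evaluating at two specific points $(\tilde a,\tilde b)=(\pm\tfrac12,0)$ to derive $\tfrac12=-\tfrac12$. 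Your proposed method of testing values at a few $\hww\lambda$ could in principle be made to work at degree $3$, but not at degree $2$.
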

Since the map $\sfj:\bfU(\g g)\to \sPD(V)$ is $\g g$-equivariant, we have $\sfj(\bfZ(\g g))\sseq\sPD(V)^{\g 
g}$.

\begin{cor}
\label{CorCor}
Let $J$ be as in Theorem \ref{thm:compl-red}. Assume that $\sP(V)$ is completely reducible and multiplicity-free.
If $J\not\cong\mathit{F}$, then $\sfj(\bfZ(\g g))=\sPD(V)^\g g$. If $J\cong\mathit{F}$, then
$\sfj(\bfZ(\g g))\subsetneq\sPD(V)^\g g$.
\end{cor}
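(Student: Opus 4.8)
The plan is to reduce the corollary to Theorem~\ref{thm:surjZg} by passing to eigenvalue functions on $\Omega$. Recall from \eqref{eqthm1.9sPD} that $\sP(V)\cong\bigoplus_{\lambda\in\Omega}V_\lambda$ with the $V_\lambda$ pairwise non-isomorphic, irreducible, and of type $\mathsf M$ (Remark~\ref{rem:CapelliEven}), so every $\g g$-module endomorphism of $V_\lambda$ is a scalar. Hence every $D\in\sPD(V)^{\g g}$ acts on $V_\lambda$ by a scalar $c_D(\lambda)$, and $D\mapsto\big(\lambda\mapsto c_D(\lambda)\big)$ is an \emph{injective} linear map from $\sPD(V)^{\g g}$ into the space of $\C$-valued functions on $\Omega$ (injective because $\sPD(V)$ acts faithfully on $\sP(V)=\bigoplus_\lambda V_\lambda$). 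By Theorem~\ref{thm-main-opVla}, $c_{D_\mu}(\lambda)=P_{J,\mu}\big(\tau_J(\hww{\lambda})\big)$, i.e.\ $c_{D_\mu}$ is the restriction of $\tau_J^*(P_{J,\mu})\in\sP(\aomega)$ to the set $\big\{\hww{\lambda}:\lambda\in\Omega\big\}$. Since that set is Zariski dense in $\aomega$ (Definition~\ref{dfnaOmeg*}), restriction $p\mapsto(\lambda\mapsto p(\hww{\lambda}))$ from $\sP(\aomega)$ to functions on $\Omega$ is injective; and since $\big(P_{J,\mu}\big)_{\mu\in\Omega}$ is a basis of $\Lambda_J$ (Definition~\ref{dfn:Omega} together with Theorems~\ref{thm:SVSP*} and \ref{thm-Iv}) while $\big(D_\mu\big)_{\mu\in\Omega}$ is a basis of $\sPD(V)^{\g g}$, the eigenvalue map identifies $\sPD(V)^{\g g}$ with $\tau_J^*(\Lambda_J)\subseteq\sP(\aomega)$; in particular $\tau_J^*$ is injective on $\Lambda_J$.

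Next I would identify the image of $\sfj(\bfZ(\g g))$ under this dictionary. For $z\in\bfZ(\g g)$ the operator $\sfj(z)$ commutes with the $\g g$-action, so it acts on the irreducible highest weight module $V_\lambda$ (of $\g b$-highest weight $\hww{\lambda}$) by a scalar, and evaluating on a highest weight vector shows this scalar is $\HC(z)(\hww{\lambda})=\mathsf{res}(\HC(z))(\hww{\lambda})$ — the standard fact that on highest weight modules the (unshifted) Harish-Chandra projection computes the central character, which holds because $z-\HC(z)\in\bfU(\g g)\g n$ for $z\in\bfZ(\g g)$ (in a PBW expansion ordered $\g n^-,\g h,\g n$, the weight-zero condition on $z$ forces the $\g n^-\bfU(\g g)$-component of $z-\HC(z)$ to vanish). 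Therefore $c_{\sfj(z)}$ is the restriction of $\mathsf{res}(\HC(z))$ to $\big\{\hww{\lambda}:\lambda\in\Omega\big\}$, and under the identification of the previous paragraph the subspace $\sfj(\bfZ(\g g))\subseteq\sPD(V)^{\g g}$ corresponds exactly to $\mathsf{res}\big(\HC(\bfZ(\g g))\big)\subseteq\tau_J^*(\Lambda_J)$.

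Finally I would invoke Theorem~\ref{thm:surjZg}. If $J\not\cong\mathit F$, taking the union over $i\geq0$ of the equalities $\mathsf{res}\big(\HC(\bfZ^{(i)}(\g g))\big)=\tau_J^*(\Lambda_J^{(i)})$ gives $\mathsf{res}(\HC(\bfZ(\g g)))=\tau_J^*(\Lambda_J)$, so $\sfj(\bfZ(\g g))$ corresponds to all of $\tau_J^*(\Lambda_J)$, and injectivity of the eigenvalue map forces $\sfj(\bfZ(\g g))=\sPD(V)^{\g g}$. If $J\cong\mathit F$, then $\mathsf{res}(\HC(\bfZ(\g g)))\subsetneq\tau_J^*(\Lambda_J)$ by Theorem~\ref{thm:surjZg}, and the same injectivity yields $\sfj(\bfZ(\g g))\subsetneq\sPD(V)^{\g g}$.

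The substance of the argument is entirely in Theorem~\ref{thm:surjZg}; the present deduction is bookkeeping. The only point that needs care is the translation dictionary — in particular, verifying that the eigenvalue map has image precisely $\tau_J^*(\Lambda_J)$, which rests on the Zariski density of $\big\{\hww{\lambda}:\lambda\in\Omega\big\}$ in $\aomega$ and on the rescaled interpolation polynomials $\big(P_{J,\mu}\big)_{\mu\in\Omega}$ forming a basis of $\Lambda_J$.
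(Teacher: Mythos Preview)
Your proof is correct. For $J\not\cong\mathit F$ it amounts to the same thing as the paper's argument (Proposition~\ref{prp::-surj-center}(ii)): both reduce to Theorem~\ref{thm:surjZg} via the eigenvalue identification, the only difference being that the paper goes through the explicit equality $\sfj(z_\mu)=D_\mu$ while you package the same information as a linear isomorphism $\sPD(V)^{\g g}\cong\tau_J^*(\Lambda_J)$.

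For $J\cong\mathit F$ your route genuinely differs from the paper's. You invoke Theorem~\ref{thm-main-opVla} (whose $J\cong\mathit F$ case is proved in Section~\ref{Sec:10dimcase}) to identify $\sPD(V)^{\g g}$ with $\tau_J^*(\Lambda_J)$, and then read off the strict inclusion from Theorem~\ref{thm:surjZg}. The paper instead proves Proposition~\ref{prpJ===Fsjscfd} \emph{without} Theorem~\ref{thm-main-opVla}: it argues by contradiction, uses only the degree bound $\oline{\HC}(D_\lambda)\in\sP^{(d)}(\aomega)$ from Proposition~\ref{prp-g=k+bdegofDlam} together with the containment $\mathsf{res}(\HC(\bfZ(\g g)))\subseteq\tau_J^*(\Lambda_J)$ checked in Proposition~\ref{prp:typeFresHCsseqtauJ*}, and then a dimension count forces $\tau_J^*(\Lambda_J)\subseteq\mathsf{res}(\HC(\bfZ(\g g)))$, contradicting that same proposition. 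Your argument is shorter but leans on the full Section~\ref{Sec:10dimcase} analysis; the paper's keeps Proposition~\ref{prpJ===Fsjscfd} logically prior to (and independent of) Theorem~\ref{thm-main-opVla} in the exceptional case. There is no circularity in your approach, since Section~\ref{Sec:10dimcase} does not use the corollary.
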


This phenomenon already occurs in the non-super case
\cite{HoweUmeda}, \cite{HelgasonAJM92}. 
Corollary \ref{CorCor} follows from Proposition \ref{prp::-surj-center}(ii)
and Proposition 
\ref{prpJ===Fsjscfd}.

\begin{rmk}
Weingart 
 \cite{weingart}
computes the eigenvalues of a basis of invariant operators corresponding to the action of $\gl(n)$ on $\Lambda(\sS^2(\C^n))$ and $\Lambda(\Lambda^2(\C^n))$.
These multiplicity-free representations arise naturally from the action of the even part of the Lie superalgebra $\g p(n)$ on its odd part. We will study similar actions in a forthcoming paper. \end{rmk}

\section{Realizations of $\g g$, $\g k$,  $\g b$, $V$, and $\mathit{\Sigma}$}

\label{sec-BorelKemd}

In this section
we describe explicit embeddings of 
 $\g b$ and $\g k$  in $\g g$.  We have the following three possibilities for $\g g$.
\begin{itemize}
\item[\rm (i)]  $\g g\cong\gl(r|s)$ or $\g g\cong\gl(r|s)\oplus\gl(r|s)$ for some $r,s\in\N$.

\item[\rm (ii)] $\g g\cong\g{gosp}(r|2s)$ for some $r,s\in\N$. 

\item[\rm (iii)] $\g g\cong \g{q}(r)\oplus\g q(r)$ for some $r\geq 2$.
\end{itemize} 
In each of the cases (i)--(iii) above,
we consider the standard matrix realization of  
$\g g$ (or its direct summands) as given in Appendices \ref{subsec-gl}, \ref{subsec-osp}, and \ref{subsec-q}, respectively. The embedding $\g g\into\gflat$ is determined uniquely by the semisimple element $h$ given below.  
We identify $\g k$ and $\g b$ as subalgebras of this realization of $\g g$. We also give an explicit description of  $\mathit{\Sigma}$. In what follows, 
$\mathrm{diag}(X_1,\ldots,X_n)$ denotes the block diagonal matrix formed by $X_1,\ldots,X_n$.\\


\noindent\textbf{Case I.} 
The matrix realization of $\gflat\cong\gl(2m|2n)$ is as in 
Appendix \ref{subsec-gl}. We set  
\[
h:=\mathrm{diag}(-I_{m\times m},I_{m\times m},-I_{n\times n},I_{n\times n}).
\] 
The matrix realization of $\g g\cong
\gl(m|n)\oplus\gl(m|n)$ is as in Appendix \ref{subsec-gl}, and the embedding $\g g\into\gflat$ is given by
\[
\left(
\begin{bmatrix}
A& B\\
C& D
\end{bmatrix}
,
\begin{bmatrix}
A'& B'\\
C'& D'
\end{bmatrix}
\right)
\mapsto
\begin{bmatrix}
A & 0_{m\times m} & B & 0_{m\times n}\\
0_{m\times m} & A' & 0_{m\times n} & B'\\
C& 0_{n\times m} & D & 0_{n\times n} \\
0_{n\times m} & C' & 0_{n\times n} & D'
\end{bmatrix}
.\]
Invariant 
supersymmetric
even bilinear forms on $\gflat$ are of the form 
$\lag x,y\rag:=\alpha_1\mathrm{str}(xy)+\alpha_2\mathrm{str}(x)\mathrm{str}(y)$ for  $\alpha_1,\alpha_2\in\C$. 
The realization of $\g a_\eev$ as a subalgebra of $\gflat$ is
\[
\g a_\eev:=\left\{\mathrm{diag}(\mathbf d_1,-\mathbf d_1,\mathbf d_2,-\mathbf d_2)\,:\,\mathbf d_1\in\C^m\text{ and }\mathbf d_2\in\C^n\right\}.
\]
Note that the bilinear map $(x,y)\mapsto \mathrm{str}(x)\mathrm{str}(y)$ vanishes on $\g a_\eev$, and therefore without loss of generality we can choose the invariant form $\lag x,y\rag_\flat$ to be
$\lag x,y\rag_\flat:=\mathrm{str}(xy)$.
Then $\mathit{\Sigma}$ is a root system of type $\mathsf{A}(m-1,n-1)$
for the choice of
$\underline{\eps}_i:=\eps_i\big|_{\g a_\eev}$ and $\underline\delta_j:=\delta_j\big|_{\g a_\eev}$,
 where
$\{\eps_i\}_{i=1}^m\cup\{\delta_j\}_{j=1}^n$ are the standard characters of the Cartan subalgebra $\g h_{m|n}$ of the left $\gl(m|n)$ summand of $\g g$, defined in \eqref{eq:CSAhm|n}. 
By a direct calculation we obtain $\lag \underline\eps_i,\underline\eps_j\rag_J^{}=\frac{1}{2}\delta_{i,j}$ and 
$\lag \underline\delta_i,\underline\delta_j\rag_J^{}=-\frac{1}{2}\delta_{i,j}$, so that $\theta_J=1$.

The embedding of $\g k\cong\gl(m|n)$ in $\g g$ is  the diagonal map $x\mapsto x\oplus x$. 
We set $\g b:=\g b^{\mathrm{op}}_{m|n}\oplus\g b^{\mathrm{st}}_{m|n}\sseq \g g$, where
$\g b^{\mathrm{st}}_{m|n}$ and 
$\g b^{\mathrm{op}}_{m|n}$
are defined in Appendix \ref{subsec-gl}. 
\\

\noindent\textbf{Case II.}
The matrix realization of  $\gflat\cong\g{osp}(4n|2m)$ 
is as in Appendix \ref{subsec-osp}. 
We set  
\[
h:=\mathrm{diag}\left(-I_{2n\times 2n},I_{2n\times 2n},-I_{m\times m},I_{m\times m}\right)
.
\] 
The matrix realization of 
$\g g\cong\gl(m|2n)$ is as in Appendix \ref{subsec-gl}, and 
the embedding $\g g\into\gflat $ is given by
\[
\begin{bmatrix}
A& B\\
C& D
\end{bmatrix}
\mapsto
\begin{bmatrix}
D & 0_{2n\times 2n} & C & 0_{2n\times m}\\
0_{2n\times 2n} & -D^T & 0_{2n\times m} & B^T\\
B & 0_{m\times 2n} & A & 0_{m\times m}\\
0_{m\times 2n} & -C^T & 0_{m\times m} &   -A^T
\end{bmatrix}.
\]
We set $\lag x,y\rag_\flat:=\mathrm{str}(xy)$.
The realization of 
$\g a_\eev$ as a subalgebra of $\gflat$ is
\[
\g a_\eev=\left\{
\mathrm{diag}
\left(\mathbf d,-\mathbf d,\mathbf a,-\mathbf a\right)\,:\,\mathbf d\in\C^m,\ \mathbf a:=(a_1,\ldots,a_{2n})\in\C^{2n},
a_{2i-1}=a_{2i}\text{ for }1\leq i\leq n
\right\},
\]
and thus $\mathit{\Sigma}$ is a root system of type
$\mathsf{A}(m-1,n-1)$ with 
$\underline\eps_i:=\eps_i\big|_{\g a_\eev}$, $1\leq i\leq m$, and $\underline{\delta}_j:=\delta_{2j-1}\big|_{\g a_\eev}$, $1\leq j\leq n$, where 
$\{\eps_i\}_{i=1}^m\cup\{\delta_j\}_{j=1}^{2n}$ are the standard characters of the Cartan subalgebra $\g h_{m|2n}\sseq \gl(m|2n)$, defined in \eqref{eq:CSAhm|n}. 
By a direct calculation we obtain $\lag \underline{\eps}_i,\underline{\eps}_j\rag_J^{}=-\frac{1}{2}\delta_{i,j}$ and 
$\lag \underline{\delta}_i,\underline{\delta}_j\rag_J^{}=\frac{1}{4}\delta_{i,j}$, so that $\theta_J=\frac{1}{2}$.

The embedding of $\g k\cong \g{osp}(m|2n)$ in $\g g$ is as in
Appendix \ref{composp}.
We set $\g b:=\g b_{m|2n}^{\mathrm{op}}$, where 
$\g b_{m|2n}^{\mathrm{op}}$ is defined in Appendix 
\ref{subsec-gl}. \\

\noindent\textbf{Case III.}  
Set $k:=\lfloor \frac{m+1}{2}\rfloor$.
The realization of $\gflat\cong\g{osp}(m+3|2n)$ is as in Appendix \ref{subsec-osp}. We set
\[
h:=
\begin{cases}
\mathrm{diag}(-2,0_{k\times k},2,0_{k\times k},0_{2n\times 2n})&\text{ if }\ m+1=2k,\\
\mathrm{diag}(0,-2,0_{k\times k},2,0_{k\times k},0_{2n\times 2n})&\text{ if }\ m+1=2k+1.
\end{cases}
\] 
The realization of $\g g\cong\g{gosp}(m+1|2n)$ is as in Appendix \ref{subsec-osp}.
Let $\{\eps_i\}_{i=1}^k\cup\{\delta_j\}_{j=1}^n\cup\{\zeta\}$
be the standard characters of the Cartan subalgebra 
$\tilde{\g h}_{m+1,n}$ of $\g g$. Then
$\g a_\eev=\bigcap_{i=2}^k\ker(\eps_i)\cap\bigcap_{j=1}^n\ker(\delta_j)$, and thus
$\mathit{\Sigma}=
\left\{\eps_1\big|_{\g a_\eev}\right\}$. We consider 
$\mathit{\Sigma}$ as a root system of type $\mathsf{A}(1,-1)$, where $\udl\eps_1-\udl\eps_2:=\eps_1\big|_{\g a_\eev}$ (the choice of $\udl\eps_1$ and $\udl\eps_2$ does not matter). Since there are no $\udl\delta_j$'s,
the value of $\theta_J$ is obtained only from the superdimension of $\udl\eps_1-\udl\eps_2$.

Similar to Appendix \ref{subsec-osp},
let $\{\sfe_i\}_{i=1}^{m+1}\cup\{\sfe'_j\}_{j=1}^{2n}$ 
denote the natural homogeneous basis of the standard $\g g$-module $\C^{m+1|n}$. 
Then $\g k\cong\g{osp}(m|2n)$ is the subalgebra of $\g g\cong\g{gosp}(m+1|2n)$ 
given by
\[
\g k:=\begin{cases}
\mathrm{Stab}_{\g g}(\sfe_1-\sfe_{k+1})
&\text{ if }m+1=2k,\\
\mathrm{Stab}_{\g g}(\sfe_2-\sfe_{k+2})&
\text{ if }m+1=2k+1.
\end{cases}
\]
The Borel subalgebra $\g b:=\g b_{m+1|2n}$
is defined in \eqref{bm|2ngosp}. \\

\noindent\textbf{Case IV.}
The Lie superalgebra $\gflat$ is isomorphic to 
Scheunert's Lie superalgebra 
$\Gamma(-t,-1,1+t)$
 (see \cite[Example I.1.5]{Scheunert}).
The realization of $\g g\cong\gl(1|2)$ is as in Appendix \ref{subsec-gl}, and 
the 
embedding of $\g k\cong\g{osp}(1|2)$ in $\g g\cong\g{gl}(1|2)$ is 
as in 
Appendix \ref{composp}.
The Borel subalgebra $\g b:=\g b_{1|2}^{\mathrm{op}}$ is defined in Appendix \ref{subsec-gl}.

To identify $\mathit{\Sigma}$ and compute the value of $\theta_J$, we need the explicit realization of the root system of $\gflat$. 
To distinguish the root systems of $\gflat $ and $\g g$, we 
denote the root system of $\gflat $ by
$\Delta^\flat:=\Delta^\flat_\eev\sqcup\Delta^\flat_\ood$
where
\[
\Delta^\flat_\eev:=
\left\{
\pm2\tilde{\eps}_1,\pm2\tilde{\eps}_2,\pm2\tilde{\eps}_3
\right\}
\text{ and }
\Delta^\flat_\ood:=
\left\{
\pm\tilde{\eps}_1\pm\tilde{\eps}_2\pm\tilde{\eps}_3
\right\}.
\]
Let $(\cdot,\cdot)_\flat'$ denote the bilinear form induced on 
$\spn_\C\{\tilde{\eps}_i:1\leq i\leq 3\}$ by the 
invariant form $(\cdot,\cdot)_\flat$.
As usual, we choose $(\cdot,\cdot)_\flat$ such that  the $\tilde{\eps}_i$'s are orthogonal 
with respect to 
$(\cdot,\cdot)'_\flat$ and 
we have $(\tilde{\eps}_1,\tilde{\eps}_1)'_\flat=-t$, 
$(\tilde{\eps}_2,\tilde{\eps}_2)'_\flat=-1$,
and 
$(\tilde{\eps}_3,\tilde{\eps}_3)'_\flat=1+t$.
Let $\{h_i\}_{i=1}^3$ be a basis for the Cartan subalgebra of $\gflat$ that is dual to the $\tilde{\eps}_i$'s, i.e., $\tilde{\eps}_i(h_j)=\delta_{i,j}$. Then
$h:=h_1+h_2$ and therefore the fundamental roots of $\Delta$ are $\eps_1-\delta_1:=
\tilde{\eps}_1-\tilde{\eps}_2- \tilde{\eps}_3\big|_{\g h}$ and 
$\delta_1-\delta_2:=2\tilde{\eps}_3\big|_{\g h}$, where
$\g h$ denotes the diagonal Cartan subalgebra of $\g g$.
It follows that $\g a_\eev=\spn_\C\{h_1,h_2\}$, and 
therefore $\mathit{\Sigma}$ has only one odd root, hence it is of type $\mathsf{A}(0,0)$.
If we choose $\underline{\eps}_1:=\tilde{\eps}_1\big|_{\g a_\eev}$ then it follows that we should have $\underline{\delta}_1=
\tilde{\eps}_2\big|_{\g a_\eev}$, and by a straightforward calculation we obtain 
$\lag \underline{\eps}_1,\underline{\eps}_1\rag_J=-t$ and 
$\lag \underline{\delta}_1,\underline{\delta}_1\rag_J=-1$, so that $\theta_J=-\frac{1}{t}$. \\

\noindent\textbf{Case V.}
The embedding of 
$\g k
:=\g k^\mathrm{ex} \cong\g{osp}(1|2)\oplus\g{osp}(1|2)
$ 
in $\g g\cong\g{gosp}(2|4)$ is defined in Appendix \ref{appBexk}. 
The Borel subalgebra  $\g b:=\g b_{2|4}^{\mathrm{ex}}$ is defined in \eqref{b2|4ex}.

As in Case IV, we let $(\cdot,\cdot)_\flat'$ be the bilinear form induced 
on the dual of the Cartan subalgebra of $\gflat$
by 
the invariant form $(\cdot,\cdot)_\flat$ of $\gflat$.  The  root system
 $\Delta^\flat:=\Delta_\eev^\flat\sqcup
 \Delta_\ood^\flat$ of $\gflat$ is
 \[
\textstyle \Delta_\eev^\flat:=
 \left\{
 \pm\tilde{\eps}_i\pm\tilde{\eps}_j
 \right\}_{1\leq i<j\leq 3}\cup
\left\{
\pm\tilde{\eps}_i 
 \right\}_{i=1}^3
\cup\{\tilde\delta\} 
\text{ and }
\Delta_\ood^\flat:=
\left\{
\frac{1}{2}\big(\pm\tilde{\eps}_1
\pm\tilde{\eps}_2
\pm\tilde{\eps}_3
\pm\tilde{\delta}
\big)
\right\},
\]
such that $(\tilde{\eps}_i,\tilde{\eps}_j)'_\flat=\delta_{i,j}$, $(\tilde\delta,\tilde\delta)'_\flat=-3$, and $(\tilde{\eps}_i,\tilde\delta)'_\flat=0$.
Let $\left\{h_{\tilde{\eps}_i}\right\}_{i=1}^3\cup\left\{h_{\tilde\delta}\right\}$
be a basis 
 dual to $\left\{\tilde{\eps}_i\right\}_{i=1}^3\cup
\{\tilde{\delta}\}$
for the Cartan subalgebra of $\gflat$. 
We set $h:=h_{\tilde{\eps}_1}+h_{\tilde\delta}$.
Then the fundamental roots of $\Delta$ are
\[
\textstyle
\eps_1-\delta_1:=\frac{1}{2}(\tilde\delta-\tilde{\eps}_1
-\tilde{\eps}_2
-\tilde{\eps}_3)\big|_{\g h},\ 
\delta_1-\delta_2:=\tilde{\eps}_3\big|_{\g h},
\text{ and }
2\delta_2:=(\tilde{\eps}_2-\tilde{\eps}_3)\big|_{\g h}.
\]
From the description of $\g k$ it follows that 
$\delta_1-\delta_2$ is a root of $\g k$, hence  
$\delta_1-\delta_2\big|_{\g a}=0$. Consequently, 
\[
\g a_\eev=\spn_\C\left\{
h_{\tilde\delta},h_{\tilde{\eps}_1},h_{\tilde{\eps}_2}
\right\}.
\]
One can now verify  that $\mathit{\Sigma}$ is a root system of type $\mathsf{A}(1,0)$, with fundamental roots 
\[
\textstyle\underline{\eps}_1-\underline{\eps}_2:=\tilde{\eps}_2\big|_{\g a_\eev}\text{ and }\underline{\eps}_2-\underline{\delta}_1:=
\frac{1}{2}(\tilde{\delta}-\tilde{\eps}_1-
\tilde{\eps}_2-\tilde{\eps}_3)\big|_{\g a_\eev}.
\]
We can determine the value of $\theta_J$ without making a choice for the $\underline{\eps}_i$ and the $\underline{\delta}_j$, as follows. 
First note that 
$\underline{\eps}_1-\underline{\eps}_2=\tilde\eps_2\big|_{\g a}$, so that 
\begin{equation}
\label{eps1eps2eq}
\lag\underline{\eps}_1-\underline{\eps}_2,\underline{\eps}_1-\underline{\eps}_2\rag_J^{}=
(\tilde\eps_2,\tilde\eps_2)_\flat^{}=1.
\end{equation}
 Since $\mathit{\Sigma}$ is assumed to be a Sergeev-Veselov deformed root system, in particular we should have
$\lag\underline{\eps}_1,\underline{\eps}_1\rag_J^{}=
\lag\underline{\eps}_2,\underline{\eps}_2\rag_J^{}$ and 
$\lag\underline{\eps}_1,\underline{\eps}_2\rag_J^{}=0$.
Thus from \eqref{eps1eps2eq} it follows that 
$\lag \underline\eps_1,\underline\eps_1\rag_J^{}=\frac{1}{2}$.
Similarly, 
$\underline{\eps}_1-\underline{\delta}_1=\frac{1}{2}(\tilde{\delta}-\tilde{\eps}_1+\tilde{\eps}_2)\big|_{\g a_\eev}$, so that
\[\textstyle
\lag\underline{\eps}_1,\underline{\eps}_1\rag_J+
\lag \underline{\delta}_1,\underline{\delta}_1\rag_J^{}
=
\lag \underline{\eps}_1-\underline{\delta}_1,
\underline{\eps}_1-\underline{\delta}_1
\rag_J^{}
=\frac{1}{4}
(\tilde{\delta}-\tilde{\eps}_1+\tilde{\eps}_2,
\tilde{\delta}-\tilde{\eps}_1+\tilde{\eps}_2
)_\flat^{}=-\frac{1}{4}.
\]
Consequently, 
$\lag \underline{\delta}_1,\underline{\delta}_1\rag_J^{}
=
-\frac{1}{4}-\lag \underline{\eps}_1,\underline{\eps}_1\rag_J^{}
=-\frac{3}{4}$. From the values of  
$
\lag \underline{\eps}_1,\underline{\eps}_1\rag_J^{}
$
and
$\lag \underline{\delta}_1,\underline{\delta}_1\rag_J^{}
$ 
we obtain
$\theta_J=\frac{3}{2}$.\\

\noindent\textbf{Case VI.}
The realization of $\gflat\cong\g p(2n)$ is as in
Appendix \ref{subsec-p}. We set
\[
h:=\mathrm{diag}(-I_{n\times n},
I_{n\times n},
I_{n\times n}
-I_{n\times n}).
\]
The realization of $\g g\cong\gl(n|n)$ is as in Appendix \ref{subsec-gl}, and the embedding 
$\g g\into\gflat$ is given by the map
\[
\begin{bmatrix}
A& B\\
C& D
\end{bmatrix}
\mapsto
\begin{bmatrix}
A & 0_{n\times n}& 0_{n\times n} & B\\
0_{n\times n} & -D^T & B^T & 0_{n\times n}\\
0_{n\times n} & -C^T & -A^T & 0_{n\times n}\\
C & 0_{n\times n} & 0_{n\times n} & D
\end{bmatrix}.
\]
The realization of $\g a_\eev$ as a subalgebra 
of $\gflat$ is 
\[
\g a_\eev :=\left\{
\mathrm{diag}(\mathbf a,-\mathbf d,-\mathbf a,\mathbf d)\,:\,\mathbf a,\mathbf d\in\C^n\right\},
\]
and $\mathit{\Sigma}$ is a root system of type $\mathsf{Q}(n)$.
The embedding of  $\g k\cong\g{p}(n)$ in $\g g\cong\gl(n|n)$ is given in 
Appendix \ref{subsec-p}.
The Borel subalgebra $\g b:={\g b}^\mathrm{mx}_{n|n}$ is defined  in Appendix \ref{subsec-gl}.
\\

\noindent\textbf{Case VII.} 
The matrix realization of $\gflat\cong\g q(2n)$ is as
in Appendix \ref{subsec-q}. The embedding of $\g g\cong\g q(n)\oplus\g q(n)$ in $\gflat$ is the restriction of the one given in Case I. The subalgebra $\g a_\eev$ is the intersection with $\g g$   
of the one given in Case I.
The
embedding of $\g k\cong\g{q}(n)$ in
$\g g\cong\g{q}(n)\oplus\g{q}(n)$ is also the restriction of the diagonal map $x\mapsto x\oplus x$.
We set $\g b:=\g b^{\mathrm{op}}_n\oplus\g b^{\mathrm{st}}_n$, where
$\g b^{\mathrm{st}}_n$ and $\g b^{\mathrm{op}}_n$ are defined
in Appendix \ref{subsec-q}. \\

We summarize the descriptions of $\gflat$, $\g g$, $\g k$, and $\g b$ in 
Table \ref{tbl-1}. In addition, in the last column of Table \ref{tbl-1} we give an explicit realization of $V$ for Cases I--III and VI--VII, and the $\g b$-highest weight of $V$ for Cases IV and V.
The symbol $\Pi$ in Cases VI and VII of Table \ref{tbl-1} is the parity reversal functor,  
so that 
\[((\C^{n|n})^*\otimes \C^{n|n})^{\Pi\otimes \Pi}:=
\left\{
v\in((\C^{n|n})^*\otimes \C^{n|n})\,:\,(\Pi\otimes\Pi)(v)=v\right\}.
\]

\begin{table}[h]
\footnotesize

\begin{tabular}{lcccccc}
&
$\gflat$ & 
$\g g$ & 
$\g k$ & 
$\g b$ &
$V$\\
\hline 

$\mathrm{I}$&
$\g{gl}(2m|2n)$&
$\g{gl}(m|n)\oplus\g{gl}(m|n)$&
$\g{gl}(m|n)$&
$\g b_{m|n}^{\mathrm{op}}\oplus
\g b_{m|n}^{\mathrm{st}}
$&
$\C^{m|n}\otimes(\C^{m|n})^*$
\\

$\mathrm{II}$&
$\g{osp}(4n|2m)$ & 
$\g{gl}(m|2n)$ &
$\g{osp}(m|2n)$ &
$\g b_{m|2n}^\mathrm{op}$ &
$\sS^2(\C^{m|2n})$\\

$\mathrm{III}$&
$\g{osp}(m+3|2n)$& 
$\g{gosp}(m+1|2n)$& 
$\g{osp}(m|2n)$ & 
$\g b_{m+1|2n}$ &
$(\C^{m+1|2n})^*$
\\

$\mathrm{IV}$&
$D(2|1,t)$&
$\gl(1|2)$&
$\g{osp}(1|2)$& 
$\g b_{1|2}^{\mathrm{op}}$ &
$\left(-\frac{3+t}{1+t}\right)\eps_1+
\left(\frac{2+t}{1+t}\right)
(\delta_1+\delta_2)
$
\\

$\mathrm{V}$&
$F(3|1)$&
$\g{gosp}(2|4)$&
$\g{osp}(1|2)\oplus
\g{osp}(1|2)$&
$\g b_{2|4}^\mathrm{ex}$&
$-3\eps_1-\zeta$

\\

$\mathrm{VI}$&
$\g p(2n)$ &
$\g{gl}(n|n)$&
$\g p(n)$& 
$\g b_{n|n}^\mathrm{mx}$ &
$\Pi(\Lambda^2(\C^{n|n}))$\\

$\mathrm{VII}$&
$\g q(2n)$ &
$\g{q}(n)\oplus\g{q}(n)$&
$\g q(n)$& 
$\g b^\mathrm{op}_n\oplus 
\g b^\mathrm{st}_n$
&
$(\C^{n|n}\otimes (\C^{n|n})^*)^{\Pi\otimes \Pi}$\\

\hline
\end{tabular}

\smallskip

\smallskip

\smallskip

\smallskip

\caption{The quintuples 
$(\gflat,\g g,\g k,\g b,V)$.}
\label{tbl-1}

\end{table}

\section{Proof of Theorem \ref{thm:compl-red}}
\label{sec-mulfree}
In this section we prove Theorem \ref{thm:compl-red}. 
Recall that by Remark \ref{rmk-someprvd}, we will only need to consider Cases IV--VI. 
We address each case separately.

\subsection{Case IV}

\label{CRCaseIV}
Recall that in this case $\g g\cong\gl(1|2)$. Let  $\g b_{1|2}^\mathrm{st}$ be the Borel subgroup of $\g g$ defined as in Appendix \ref{subsec-gl}.
The next proposition implies Theorem \ref{thm:compl-red} in Case IV. 
\begin{prp}
\label{prp-D21tKss}
Assume that $J\cong \mathit{D}_t$ for $t\neq 0,-1$.  Then $\sP(V)$ is multiplicity-free  if and only if 
$-\frac{1}{t}\not\in\Q^{\leq 0}$. If the latter condition holds, 
 then for every $d\geq 1$ we have
  \[
\sP^d(V)
\cong\bigoplus_{k=1}^d 
V_{\eta_k},
 \]
where $V_{\eta_k}$ is the irreducible $\g g$-module with $\g b$-highest weight 
\[\eta_k:=
\left(d\left(\frac{3+t}{1+t}\right)-2k\right)\eps_1+\left(
-d\left(\frac{2+t}{1+t}\right)+k
\right)(\delta_1+\delta_2)
.
\]
\end{prp}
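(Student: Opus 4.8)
The plan is to analyze the $\gl(1|2)$-module $\sP(V) \cong \sS(J)$ directly using highest weight theory, exploiting that $V$ is one-dimensional-ish in structure (the $\g b_{1|2}^{\mathrm{op}}$-highest weight of $V$ is listed in Table \ref{tbl-1}). Since $\g g \cong \gl(1|2)$ has a small rank, I would first decompose $\sS(J)$ as a module over the even part $\g g_\eev \cong \gl(1) \oplus \gl(2)$, which is classical, and then track how the odd part of $\g g$ glues these pieces together. Concretely, write $J = J_\eev \oplus J_\ood$ where $\dim J_\eev = 2$ and $\dim J_\ood = 2$ (since $\mathit{D}_t$ is $(2|2)$-dimensional as a superspace); then $\sS(J) \cong \sS(J_\eev) \otimes \Lambda(J_\ood)$ as $\g g_\eev$-modules, and $\Lambda(J_\ood)$ is finite-dimensional, so each homogeneous component $\sP^d(V)$ is a finite direct sum of $\g g_\eev$-isotypic pieces whose multiplicities and highest weights can be computed explicitly. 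This gives an upper bound of $d$ on the number of $\g g$-composition factors in $\sP^d(V)$.

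Next I would identify the candidate highest weights $\eta_k$, $1 \le k \le d$, as the $\g b$-highest weights that actually occur: each $\eta_k$ should arise as the highest weight of a $\g g_\eev$-primitive vector in $\sP^d(V)$ that is also annihilated by the positive odd root vectors of $\g b = \g b_{1|2}^{\mathrm{op}}$. The explicit form of $\eta_k$ in the statement — linear in $k$ with the coefficients $\frac{3+t}{1+t}$ and $\frac{2+t}{1+t}$ matching the highest weight of $V$ scaled by $d$, corrected by the integer $k$ — strongly suggests these come from the ``obvious'' weight vectors $v^{\otimes(d-k)} \otimes (\text{something of degree } k \text{ in the other directions})$. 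I would verify by a direct computation that each $V_{\eta_k}$ is a genuine subquotient and that, generically in $t$, these are the only ones, so that $\bigoplus_k V_{\eta_k}$ accounts for all of $\sP^d(V)$ on dimension grounds (comparing graded dimensions, or characters, of $\sP^d(V)$ with $\sum_k \dim V_{\eta_k}$).

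The crux is the ``if and only if'': showing that complete reducibility and multiplicity-freeness hold \emph{exactly} when $-\tfrac{1}{t} \notin \Q^{\le 0}$, i.e. when $\theta_J \notin \mathcal{S}(1,1) = \Q^{\le 0}$. For the forward direction (genericity of $t$ implies the good behavior), the key point is that the $\g g_\eev$-highest weights of the candidate modules $V_{\eta_k}$ are mutually distinct precisely when no coincidence $\eta_k = \eta_{k'}$ or extension between $V_{\eta_k}$ and $V_{\eta_{k'}}$ is forced; such coincidences and non-split extensions are governed by integrality/atypicality conditions on the weights, and computing the relevant bilinear-form pairings $\langle \eta_k + \rho, \alpha \rangle$ for odd roots $\alpha$ of $\gl(1|2)$ will show these vanish precisely when $-\tfrac1t$ is a non-positive rational of a controlled shape — which, after bookkeeping, is all of $\Q^{\le 0}$. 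For the converse (when $-\tfrac1t \in \Q^{\le 0}$, multiplicity-freeness fails), I would exhibit an explicit value of $d$ and a non-split self-extension or a repeated composition factor: at such special $t$ two of the $V_{\eta_k}$ either coincide or sit in a non-semisimple block, which one sees by checking that a would-be highest weight vector fails to generate an irreducible submodule (its image under a lowering operator lands back in a proper submodule). I expect this converse direction — pinning down the failure of complete reducibility at \emph{every} non-positive rational, not just at negative integers — to be the main obstacle, and I would handle it by a careful analysis of atypicality for $\gl(1|2)$ applied to the weights $\eta_k$, possibly reducing to the structure of Verma modules or the known block structure of $\mathcal{O}$ for $\gl(1|2)$.
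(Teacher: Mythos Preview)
Your plan is in the right spirit but is missing the structural tool that makes the argument clean, and as written it has a gap. When you say the $V_{\eta_k}$ ``account for all of $\sP^d(V)$ on dimension grounds,'' that only identifies the composition factors; it does not establish a direct sum. You later invoke atypicality to control extensions, which is the correct instinct, but you never name the mechanism that turns this into a proof.

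The paper's organizing idea is \emph{projectivity}. Passing to $\sS^d(V^*)$ and the standard Borel $\g b_{1|2}^{\mathrm{st}}$, one checks that the $\g b_{1|2}^{\mathrm{st}}$-highest weight $\eta$ of $V^*$ is typical, so $V^*\cong K(\eta)$ is a Kac module and hence projective in the category $\mathcal F$ of finite-dimensional weight modules. Projectivity is preserved under tensoring with objects of $\mathcal F$ and under passing to direct summands, so $\sS^d(V^*)$ is projective and therefore admits a filtration by Kac modules. The Kac modules that appear are read off from the $\g g_\eev$-decomposition of $\sS^d(V^*)_\ood$ (each $K(\mu)$ contributes a prescribed $\g g_\eev$-module to the odd part), yielding exactly one copy of each $K(\gamma_k+\eps_1-\delta_1)$ for $0\le k\le d-1$. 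Now both directions are immediate: $\sS^d(V^*)$ is semisimple iff every Kac module in the filtration is irreducible, i.e.\ every $\gamma_k+\eps_1-\delta_1$ is typical; and the typicality conditions, run over all $d$ and $k$, reduce to $\frac{1}{1+t}\notin\Q\cap[0,1]$, equivalently $-\tfrac{1}{t}\notin\Q^{\le 0}$. The $\g b$-highest weights $\eta_k$ are then obtained from the $\g b_{1|2}^{\mathrm{st}}$-highest weights by odd reflections.

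This also dissolves what you flagged as the main obstacle. With the Kac filtration in hand, the converse is a one-liner: an atypical $K(\mu)$ is reducible and indecomposable, and every subquotient of a semisimple module is semisimple, so the presence of such a $K(\mu)$ as a subquotient forces $\sS^d(V^*)$ to be non-semisimple. No block analysis and no explicit construction of non-split extensions are needed.
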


\begin{proof}
By the $\g g$-module isomorphism
$\sP^d(V)\cong\sS^d(V^*)$ it is  enough to prove the analogous statement for $\sS^d(V^*)$.
The $\g b^{\mathrm{st}}_{1|2}$-highest weight
of  
$V^*$ is \[
\eta:=\left(\frac{3+t}{1+t}\right)\eps_1-
\left(\frac{2+t}{1+t}\right)
(\delta_1+\delta_2)
.
\]
For every 
$\mu:=x_1\eps_1+y_1\delta_1+y_2\delta_2\in\g h^*_\eev$ such that $y_1-y_2\in\Z^{\geq 0}$, we denote the irreducible finite dimensional $\g g_\eev^{}$-module with $(\g b_{1|2}^{\mathrm{st}}\cap \g g_\eev^{})$-highest weight $\mu$ by $M_\mu$. Let
$K(\mu):=\mathrm{Ind}_{\g b_{1|2}^\mathrm{st}}^\g g M_\mu$ be the corresponding Kac module.
 As a $\g g_\eev^{}$-module,  $K(\mu)_\ood$ is isomorphic to $M_\mu\otimes M_{-\eps_1+\delta_1}$.
Therefore 
\begin{equation}
\label{Kmux1--}
K(\mu)_\ood\cong
\begin{cases}
M_{\mu-\eps_1+\delta_1}& \text{ if }y_1=y_2,\\
M_{\mu-\eps_1+\delta_1}\oplus 
M_{\mu-\eps_1+\delta_2}
& \text{ if }y_1>y_2.
\end{cases}
\end{equation}
Furthermore, $\mu$ is a typical 
$\g b_{1|2}^{\mathrm{st}}$-highest weight if and only if $x_1+y_1\neq 0$ and $x_1+y_2\neq 1$.

Since $\eta$ is typical, we have $V^*\cong K(\eta)$.
Let $\mathcal F$ denote the category of finite dimensional $\g h$-weight modules of $\g g$. Typicality of $\eta$ implies that $V^*$ is projective in 
$\mathcal F$. Consequently, the tensor product of $V^*$ and any object of $\mathcal F$ is also projective 
(the proof of the latter statement is similar to \cite[Prop. 3.8(b)]{HumphreysBGG}). It follows that
$\sS^d(V^*)$, which is a submodule of $(V^*)^{\otimes d}$, is also projective, and therefore it
has a filtration by Kac modules
(see \cite[Prop. 2.5]{Zou}). 

Set $\gamma:=\eta-2\eps_1+\delta_1+\delta_2$
and $\gamma_k:=
d\eta-(2k+1)\eps_1+(k+1)\delta_1+k\delta_2$ 
for $0\leq k\leq d-1$.
 Then  we have an isomorphism of $\g g_\eev^{}$-modules
\begin{align}
\label{SdKmut}
\sS^d(V^*)_\ood
\notag
&
\cong 
\sS^{d-1}(V^*_\eev)\otimes V^*_\ood\\
&
\cong
\sS^{d-1}\left(M_\gamma\oplus M_{\eta}\right)\otimes M_{\eta-\eps_1+\delta_1}
\cong\bigoplus_{k=0}^{d-1}M_{k\gamma+(d-k-1)\eta}\otimes M_{\eta-\eps_1+\delta_1}\cong
\bigoplus_{k=0}^{d-1}
M_{\gamma_k}.
\end{align}
By comparing \eqref{SdKmut} with \eqref{Kmux1--}, it follows that  the Kac-module filtration of $\sS^d(V^*)$  
consists of exactly one copy of  each of the modules
$K(\gamma_k+\eps_1-\delta_1)$.
If $\gamma_k+\eps_1-\delta_1$ is atypical for some $k$, then $\sS^d(V^*)$ cannot be completely reducible because the subquotient $K(\gamma_k+\eps_1-\delta_1)$ is reducible but indecomposable. Thus, a necessary condition for complete reducibility of $\sS^d(V^*)$ is that 
$\gamma_k+\eps_1-\delta_1$ is typical for every $0\leq k\leq d-1$. But the latter necessary condition is also sufficient because typical modules always split off as direct summands.

Next we determine when all of the $\gamma_k+\eps_1-\delta_1$ are typical. Note that 
\[
\gamma_k+\eps_1-\delta_1=
\left(d\left(\frac{3+t}{1+t}\right)-2k\right)\eps_1+\left(
-d\left(\frac{2+t}{1+t}\right)+k
\right)(\delta_1+\delta_2)
\quad\text{ for }\quad 0\leq k\leq d-1.
\]
Therefore 
$\gamma_k+\eps_1-\delta_1$ 
is typical for all  $0\leq k\leq d-1$  if and only if $\frac{d}{1+t}\not\in\{0,\ldots,d\}$. It follows that $\sS(V^*)$ is completely reducible if and only if 
$\frac{1}{1+t}\not\in
\left\{x\in\Q\,:\,0\leq x\leq 1\right\}$.
Since $t\neq 0$, the latter condition 
 can be expressed as $-\frac{1}{t}\notin \Q^{\leq 0}$. 
 
 Finally, using 
the fact that 
 $\g b$ is obtained from $\g b_{1|2}^{\mathrm{st}}$ by the composition 
$s_{\delta_1-\delta_2}\circ s_{\eps_1-\delta_2} \circ s_{\eps_1-\delta_1} 
$  of even and odd reflections, it is straightforward to 
verify that the $\g b$-highest weight of $K(\gamma_k+\eps_1-\delta_1)$ is $\eta_{k+1}$ (see \cite[Lem 1.40]{CWBook}).
\end{proof}

\subsection{Case V}

For $d\geq 0$ we have a $\g g$-module isomorphism $\sP^d(V)\cong \sS^d(V^*)$. The weights of $V^*_\eev$ are 
$
\{\eps_1+\zeta,\eps_1\pm\delta_1\pm\delta_2+\zeta,3\eps_1+\zeta\}
$
and the weights of $V^*_\ood$ are $\{2\eps_1\pm\delta_1+\zeta,
2\eps_1\pm\delta_2+\zeta\}$.

Let $\g b:=\g b_{2|4}^{\mathrm{ex}}$ and
$\g b_{2|4}$ be the Borel subalgebras that are chosen in Appendix  \ref{subsec-osp}. Then $\g b$ can be obtained from $\g b_{2|4}$ by applying the sequence of odd reflections 
\begin{equation}
\label{eq:bbexoddref}
r_{\eps_1+\delta_1}\circ
r_{\eps_1+\delta_2}\circ
r_{\eps_1-\delta_2}\circ
r_{\eps_1-\delta_1}.
\end{equation}
Let $u$ be a $\g b$-highest weight vector of $V^*$. Then $u$ has weight $\eps_1+\delta_1+\delta_2+\zeta$. 
Also, let $w$ be a $\g b_{2|4}$-highest weight vector of $V^*$. Then 
 the weight of $w$ is $3\eps_1$, hence 
 $w\in V_\eev^*$ and therefore $w^k$ for $k\geq 2$ is a $\g b_{2|4}$-highest weight vector in $\sS^k(V^*)$ of weight 
$3k\eps_1$. 
For $s\geq 2$, we  set \[
w_s:=e_{-\eps_1-\delta_1}
(e_{-\eps_1-\delta_2}
(e_{-\eps_1+\delta_2}
(e_{-\eps_1+\delta_1}(w^s)
))),\]
where the $e_{-\eps_1\pm \delta_i}$ denote root vectors of $\g g$.  
\begin{lem}
For $s\geq 2$, the vector $w_s$ is a typical $\g b$-highest weight, whose  weight is $(3s-4)\eps_1+s\zeta$. In addition, $w_2\in\sS^2(V^*_\eev)$. 
\end{lem}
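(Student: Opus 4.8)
The strategy is to track the weight of $w_s$ through the four successive applications of the lowering operators $e_{-\eps_1\pm\delta_i}$, then check typicality of the resulting weight against the explicit typicality criterion for $\g{gosp}(2|4)$ relative to the Borel $\g b$, and finally to establish that $w_2$ lies in the even part $\sS^2(V^*_\eev)$ by a parity count. First I would compute the weight: starting from $w^s$, which has weight $3s\eps_1+s\zeta$ (recall each $\g b_{2|4}$-highest weight vector $w$ of $V^*$ has weight $3\eps_1+\zeta$, so $w^s$ has weight $3s\eps_1+s\zeta$), each root vector $e_{-\eps_1\pm\delta_i}$ subtracts the corresponding root. The sum of the four roots $\eps_1-\delta_1,\eps_1+\delta_2,\eps_1-\delta_2,\eps_1+\delta_1$ appearing in the definition of $w_s$ is $4\eps_1$, so the weight of $w_s$ is $3s\eps_1+s\zeta-4\eps_1=(3s-4)\eps_1+s\zeta$, as claimed. (The $\delta_j$-contributions cancel in pairs, which is why no $\delta$'s survive.)

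**Key steps.** The substantive content is twofold: (a) showing $w_s\neq 0$, and (b) showing $w_s$ is a $\g b$-highest weight vector that is typical. For (b), the fact that $\g b$ is obtained from $\g b_{2|4}$ by the sequence of odd reflections $r_{\eps_1+\delta_1}\circ r_{\eps_1+\delta_2}\circ r_{\eps_1-\delta_2}\circ r_{\eps_1-\delta_1}$ — exactly the reflections corresponding (in the same order, up to orientation) to the roots being lowered by — is the crucial structural input: applying $e_{-\alpha}$ to a highest weight vector and then changing the Borel by the odd reflection $r_\alpha$ produces (when the result is nonzero) a highest weight vector for the new Borel, by the standard behaviour of highest weight vectors under odd reflections (cf. \cite[Lem. 1.40]{CWBook}, used in the same way in the proof of Proposition \ref{prp-D21tKss}). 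So after the four reflections, $w_s$ is a $\g b$-highest weight vector provided it is nonzero. For nonzeroness and for typicality, I would invoke the explicit root-system data for $\gflat\cong F(3|1)$ recorded in Case V: a weight $\nu$ of $\g g\cong\g{gosp}(2|4)$ is $\g b$-typical precisely when $\langle \nu+\rho,\beta\rangle\neq 0$ for every odd root $\beta$ of $\g g$, and substituting $\nu=(3s-4)\eps_1+s\zeta$ one checks that for $s\geq 2$ (in particular $s$ an integer $\geq 2$) none of these pairings vanishes — this is a finite, explicit computation using the bilinear form from Case V. Typicality then also forces $w_s\neq 0$: a nonzero highest weight vector for a typical weight exists inside the (completely reducible, by typicality) submodule it generates, and one sees directly from the PBW/Kac-module picture that the lowering chain cannot annihilate $w^s$ — alternatively, nonvanishing of $w_s$ can be read off from the nonvanishing of the corresponding matrix coefficient, since each $e_{-\eps_1\pm\delta_i}$ acts injectively on the relevant weight space by atypicality considerations at each intermediate stage.

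**The parity claim and the main obstacle.** For the final assertion $w_2\in\sS^2(V^*_\eev)$: we have $w^2\in\sS^2(V^*_\eev)$ since $w$ has even weight $3\eps_1+\zeta$ (the weights of $V^*_\eev$ are $\{\eps_1+\zeta,\eps_1\pm\delta_1\pm\delta_2+\zeta,3\eps_1+\zeta\}$, so $w$ is even), and applying four odd root vectors $e_{-\eps_1\pm\delta_i}$ reverses parity four times, returning to even — but this shows $w_2$ is an even element of $\sS^2(V^*)$, i.e. $w_2\in\sS^2(V^*_\eev)\oplus(V^*_\eev\cdot V^*_\ood)\cdot(\text{odd})$... more carefully, $\sS^2(V^*)_\eev=\sS^2(V_\eev^*)\oplus\Lambda^2(V_\ood^*)$, so I must rule out the $\Lambda^2(V_\ood^*)$ component. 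This is done by weight considerations: the weight $2\eps_1+2\zeta$ of $w_2$ must be writable as a sum of two weights from the relevant space, and one checks against the explicit weight list — the weights of $V^*_\ood$ are $\{2\eps_1\pm\delta_1+\zeta,2\eps_1\pm\delta_2+\zeta\}$, whose pairwise sums are $\{4\eps_1\pm\delta_i\pm\delta_j+2\zeta\}$, none equal to $2\eps_1+2\zeta$; hence the $\Lambda^2(V^*_\ood)$-weight-space at $2\eps_1+2\zeta$ is zero, forcing $w_2\in\sS^2(V^*_\eev)$. The main obstacle I anticipate is the nonvanishing of $w_s$ — verifying that the specific ordered composition of four lowering operators does not kill $w^s$ requires either a careful bookkeeping of the $\g{sl}_2$- (or $\g{osp}(1|2)$-) strings involved at each step, or an appeal to a general principle about odd reflections producing nonzero highest weight vectors when the target weight is dominant-typical for the new Borel; I would present the latter, citing the odd-reflection machinery, as it is cleanest, but the honest verification is the computational heart of the lemma.
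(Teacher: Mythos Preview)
Your proposal is correct and follows the same approach as the paper: weight computation, odd-reflection machinery (\cite[Lem.~1.40]{CWBook}) for the highest-weight and nonvanishing claims, and a weight-space check for $w_2\in\sS^2(V^*_\eev)$. The paper streamlines the nonvanishing step by first checking typicality of the \emph{starting} weight $3s\eps_1+s\zeta$ for $\g b_{2|4}$, which via the odd-reflection lemma guarantees that each successive application of $e_{-\eps_1\pm\delta_i}$ yields a nonzero vector (this is precisely the ``non-orthogonality at each intermediate stage'' you mention at the end); typicality of the resulting $\g b$-highest weight then follows because typicality is a property of the module, not of the Borel.
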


\begin{proof}
It is straightforward to verify that $3s\eps_1+s\zeta$  is a typical $\g b_{2|4}$-highest weight. From the relation between $\g b$ and $\g b_{2|4}$ via odd reflections
given in \eqref{eq:bbexoddref}, it follows that 
 the $\g b$-highest weight vector of the irreducible summand of $\sS^s(V^*)$ generated by $w^s$ is $w_s$ (see \cite[Lem. 1.40]{CWBook}). 
Since the $(2\eps_1+2\zeta)$-weight space of $\sS^2(V^*)$ is indeed a subspace of $\sS^2(V_\eev^*)$, we obtain $w_2\in\sS^2(V^*_\eev)$.
\end{proof}

\begin{dfn}
\label{def-Edwts}
Let $\mathcal W_d$ denote the set of $\g b$-highest weight vectors given in (i) and (ii) below.
\begin{itemize}
\item[(i)] Vectors of the form
$u^q(w_2)^rw_s$ for integers $q,r,s$ that satisfy $q,r\geq 0$, $s\geq 2$, and $d+2r+s=d$. Note that
$u^q(w_2)^rw_s\neq 0$ since $u,w_2\in \sS(V^*_\eev)$.
The weight of 
$u^q(w_2)^rw_s$ is equal to 
\begin{equation}
\label{d+2s-4eps1+bd}
(d+2s-4)\eps_1+(d-2r-s)\left(\delta_1+\delta_2\right)+d\zeta.
\end{equation}
\item[(ii)] The vector $u^d$, whose weight is $d\eps_1+d\delta_1+d\delta_2+d\zeta$.

\end{itemize}
We denote the set of weights of the vectors in $\mathcal W_d$ by $\mathcal E_d$. 
\end{dfn}
Let $\g m$ be the image of the unique embedding 
$\iota:\g{sp}(4)\to \g g_\eev$.
For every $\mu:=y_1\delta_1+y_2\delta_2$ such that $y_1\geq y_2$, let $M(\mu)$ denote the irreducible 
$\g m$-module with $\g b\cap \g m$-highest weight $\mu$. 

\begin{lem}
\label{lem:ddd-1d-1}
Let  $W$ be the irreducible $\g g$-module with $\g b$-highest weight $d\eps_1+d\delta_1+d\delta_2+d\zeta$, for $d\geq 1$. Then both $M\big(d\delta_1 +d\delta_2)$ and
$M\big((d-1)\delta_1+(d-1)\delta_2
\big)$
occur as $\g m$-submodules of  $W_\eev$. 
\end{lem}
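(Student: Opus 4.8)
The plan is to analyze the branching of the irreducible $\g g$-module $W$ (with $\g b$-highest weight $d\eps_1+d\delta_1+d\delta_2+d\zeta$, which for $J\cong\mathit F$ is exactly $V_{\underline\lambda}$ for $\lambda=(d)$ by Table \ref{tbl-3}) to the subalgebra $\g m=\iota(\g{sp}(4))\sseq\g g_\eev$, and to locate the two claimed $\g m$-types inside $W_\eev$. First I would record that, because the weight $d\eps_1+d\delta_1+d\delta_2+d\zeta$ is typical for $\g b=\g b_{2|4}^{\mathrm{ex}}$ (this was the content of the previous lemma, applied with $w_s$ having weight $(3s-4)\eps_1+s\zeta$; here $u^d$ is the highest weight vector and one checks typicality of the corresponding $\g{osp}(1|2)\oplus\g{osp}(1|2)$-dominant weight directly), the module $W$ is a typical irreducible and hence, restricted to $\g g_\eev$, it is isomorphic to $K(\underline\lambda)_\eev$ for the associated Kac module; thus $W_\eev\cong M_{\mathrm{top}}\otimes\Lambda^{\bullet}(\g g_{-1})$ as a $\g g_\eev$-module, where $\g g_{-1}$ is the odd part of $\g g$ realized as an abelian ideal and $M_{\mathrm{top}}$ is the $\g g_\eev$-irreducible of highest weight $d\eps_1+d\delta_1+d\delta_2+d\zeta$. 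The even part $\g g_\eev$ here is (up to a central $\gl(1)$-factor tracked by $\eps_1$ and $\zeta$) a copy of $\g{sp}(4)\times\g{sl}(2)$ or $\g{sp}(4)\times\gl(1)$; the $\delta_i$-weights are exactly the $\g m=\g{sp}(4)$-weights.

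The key computation is then: which $\g m$-types appear in $M_{\mathrm{top}}\big|_{\g m}$ and in $(M_{\mathrm{top}}\otimes\Lambda^{k}\g g_{-1})\big|_{\g m}$ for small $k$. The $\g m$-highest weight of $M_{\mathrm{top}}$ is $d\delta_1+d\delta_2$, so $M(d\delta_1+d\delta_2)$ occurs in the $\Lambda^0$-piece of $W_\eev$ — that handles the first claim immediately (it is literally the $\g m$-submodule generated by the highest weight vector $u^d$). For the second claim I would look at the $\Lambda^1\g g_{-1}$ contribution: the odd root vectors of $\g g$ have weights $2\eps_1\pm\delta_1+\zeta$, $2\eps_1\pm\delta_2+\zeta$ (dual to the listed weights of $V^*_\ood$), i.e. they carry $\g m$-weights $\pm\delta_i$, so $\g g_{-1}$ as an $\g m$-module is the $4$-dimensional vector representation $M(\delta_1)$ of $\g{sp}(4)$. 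Tensoring, $M(d\delta_1+d\delta_2)\otimes M(\delta_1)$ decomposes by the $\g{sp}(4)$ Pieri/Littlewood--Richardson rule, and one of the constituents is $M((d-1)\delta_1+(d-1)\delta_2)$ (lowering both $\delta_1$ and $\delta_2$ entries by one via the $-\delta_1$ and $-\delta_2$ weights of the vector rep). So $M((d-1)\delta_1+(d-1)\delta_2)$ appears in $(M_{\mathrm{top}}\otimes\g g_{-1})$, hence in the $\g g_\eev$-socle/composition series of $W_\eev$, hence as an $\g m$-submodule of $W_\eev$ by complete reducibility of $\g m$-modules.

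A cleaner route, which I would actually prefer to write up, is explicit: exhibit an $\g m$-highest weight vector of weight $(d-1)\delta_1+(d-1)\delta_2$ inside $W_\eev$ directly. Since $u^{d-1}$ is an $\g m$-highest weight vector of weight $(d-1)\delta_1+(d-1)\delta_2$ modulo the extra $\eps_1,\zeta$-weight, I would apply a suitable product of two odd root vectors $e_{-2\eps_1+\delta_1-\zeta}$-type and $e_{-2\eps_1+\delta_2-\zeta}$-type (i.e. lowering operators in $\g g$ that move along $\g g_{-1}$) to $u^{d}$ or $u^{d-1}\cdot(\text{something})$, producing a nonzero vector of the right weight that is annihilated by the positive part of $\g m$; nonvanishing is checked by a weight-space dimension count in $\sS^d(V^*)$ (the relevant weight space is small and one pins down that the constructed vector is not a multiple of any vector already in the $M(d\delta_1+d\delta_2)$-isotypic piece). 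The main obstacle I anticipate is precisely this nonvanishing/linear-independence check: one must make sure the candidate vector is not killed, and that it genuinely generates a new $\g m$-type rather than lying in the image of $M(d\delta_1+d\delta_2)$ under the $\g m$-action; this is where a careful bookkeeping of the weight multiplicities of $\sS^d(V^*)$ near the weight $(d-1)\delta_1+(d-1)\delta_2+(d+2)\eps_1+d\zeta$ (using the explicit lists of weights of $V^*_\eev$ and $V^*_\ood$ given above) does the work, and it is the only genuinely computational part of the argument.
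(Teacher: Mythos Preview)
The first part (that $M(d\delta_1+d\delta_2)$ occurs, generated by the $\g b$-highest weight vector) is correct and is exactly what the paper does.

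For the second $\g m$-type, your approach (a) breaks down at the outset: the highest weight $d\eps_1+d\delta_1+d\delta_2+d\zeta$ is \emph{atypical}, so $W$ is not a Kac module. The preceding lemma in the paper asserts typicality only for the weights of the vectors $w_s$ with $s\geq 2$; the weight of $u^d$ belongs to a different family, and you have conflated the two. Indeed, the dimension count in the proposition immediately following this lemma forces $\dim W_\eev=N_d+N_{d-1}$, whereas a Kac module would give $8N_d$. (Even had $W$ been typical, there are two further slips: the Kac induction is tied to the $\Z$-grading coming from the $\eps_1$-eigenvalue, so $\g b_{2|4}$ rather than $\g b$ is the relevant Borel and $M_{\mathrm{top}}$ carries the $\g b_{2|4}$-highest weight, not the $\g b$-one; and $\Lambda^1(\g g_{-1})$ lies in the \emph{odd} part of the Kac module, while in any case the Pieri rule for $M(d\delta_1+d\delta_2)\otimes M(\delta_1)$ shifts the highest weight by a single weight $\pm\delta_i$ and never produces $M((d-1)\delta_1+(d-1)\delta_2)$.)

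Your fallback route (b) points in the right direction, but you leave the key nonvanishing step unchecked. The paper carries this out with no weight-space bookkeeping at all: it passes from $\g b$ to $\g b_{2|4}$ via the sequence of odd reflections \eqref{eq:bbexoddref} and reads off from \cite[Lem.~1.40]{CWBook} that the $\g b_{2|4}$-highest weight of $W$ is $(d+2)\eps_1+(d-1)\delta_1+(d-1)\delta_2+d\zeta$. Since $\g b_{2|4}\cap\g m=\g b\cap\g m$, the $\g b_{2|4}$-highest weight vector is automatically a nonzero $(\g b\cap\g m)$-highest weight vector of $\g m$-weight $(d-1)\delta_1+(d-1)\delta_2$, and it lies in $W_\eev$ because the shift from the $\g b$-highest weight is a sum of two odd roots. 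This odd-reflection trick replaces all of the explicit construction and verification you anticipated as ``the main obstacle''.
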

\begin{proof}
The $\g m$-submodule of $W$ that is generated by the $\g b$-highest weight of $W$ is isomorphic to 
$M\big(d\delta_1 +d\delta_2)$.
From
\cite[Lem. 1.40]{CWBook} and
the relation between $\g b$ and  $\g b_{2|4}$ via odd reflections given in \eqref{eq:bbexoddref}
it follows that the $\g b_{2|4}$-highest weight  of $W$ is 
$(d+2)\eps_1+(d-1)\delta_1+(d-1)\delta_2+d\zeta$. Since
$\g b_{2|4}\cap \g m=\g b\cap\g m$, 
the $\g m$-module generated by the 
$\g b_{2|4}$-highest weight  of $W$ 
 is isomorphic to $M\big((d-1)\delta_1
+(d-1)\delta_2\big)$.
\end{proof}

\begin{lem}
\label{sp4lem-SC}
For every $k\geq 0$ there is an isomorphism of $\g{sp}(4)$-modules
\[\sS^k\big(M(\delta_1+\delta_2)\big)\cong
\bigoplus_{i=0}^{\lfloor \frac{k}{2}\rfloor}M\big((k-2i)\delta_1+(k-2i)\delta_2\big) 
.
\]
\end{lem}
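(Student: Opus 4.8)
The plan is to prove this purely inside the representation theory of $\g{sp}(4)$, where it is a decomposition of a symmetric power of a fundamental representation. First I would identify $M(\delta_1+\delta_2)$: in the standard conventions for $\g{sp}(4)=\g{sp}_4(\C)$, the weight $\delta_1+\delta_2$ is the highest weight of the $5$-dimensional representation, which is the second fundamental representation $\Lambda^2_0(\C^4)$ (the irreducible constituent of $\Lambda^2$ of the standard $4$-dimensional module), and is also the standard module for $\g{so}(5)$ under the exceptional isomorphism $\g{sp}(4)\cong\g{so}(5)$. So the claim is really the classical decomposition of $\sS^k$ of the standard $5$-dimensional orthogonal representation of $\g{so}(5)$, namely $\sS^k(\C^5)\cong\bigoplus_{i=0}^{\lfloor k/2\rfloor}\cH_{k-2i}$, where $\cH_j$ is the space of harmonic polynomials of degree $j$ (the irreducible with highest weight $j(\delta_1+\delta_2)$). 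The cleanest route I would take is: the invariant quadratic form gives a $\g{so}(5)$-equivariant surjection $\sS^{k-2}(\C^5)\to\sS^k(\C^5)$ via multiplication by the form, with the kernel/complement being the harmonics $\cH_k=M(k\delta_1+k\delta_2)$; iterating this ``trace decomposition'' of symmetric tensors over an orthogonal space yields exactly the stated sum. That the leading piece $\cH_k$ is irreducible with the asserted highest weight is standard (spherical harmonics for $\g{so}(n)$, $n\ge 3$).

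Alternatively, if one prefers to stay on the symplectic side and avoid invoking the exceptional isomorphism, I would argue by a weight/character computation or induction on $k$: the highest weight appearing in $\sS^k(M(\delta_1+\delta_2))$ is $k(\delta_1+\delta_2)$, so $M(k\delta_1+k\delta_2)$ occurs; then one uses the dimension count $\dim M(j\delta_1+j\delta_2)=\binom{j+4}{4}-\binom{j+2}{4}$ (equivalently $\dim M(j\delta_1+j\delta_2)=\tfrac{1}{6}(2j+3)(j+1)(j+2)$, the dimension of degree-$j$ harmonics on $\R^5$) and checks that $\sum_{i=0}^{\lfloor k/2\rfloor}\dim M((k-2i)\delta_1+(k-2i)\delta_2)=\binom{k+4}{4}=\dim\sS^k(\C^5)$, which is a routine telescoping identity. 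Combined with the fact that every irreducible summand of $\sS^k(M(\delta_1+\delta_2))$ must have highest weight $\le k(\delta_1+\delta_2)$ in dominance order and, by a weight-multiplicity inspection, can only be of the form $(k-2i)(\delta_1+\delta_2)$ (since all weights of $\sS^k$ of the $5$-dimensional module lie in the lattice spanned by $\delta_1+\delta_2$ and $\delta_1-\delta_2$, and a parity argument rules out the odd multiples and the $\delta_1-\delta_2$ direction appearing with a new highest weight), the dimension match forces equality.

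The only genuinely delicate point is ensuring that \emph{no} extra summands $M(a\delta_1+b\delta_2)$ with $a\ne b$ or with $a+b<k$ of the wrong parity sneak in; I would handle this by the dimension-counting argument above, which is airtight once the two dimension formulas are in hand, rather than by a case analysis of tensor product rules. I do not expect any real obstacle here: this is a classical fact about harmonics on a $5$-dimensional quadratic space, and the proof is a half-page once the identification $M(\delta_1+\delta_2)\cong\Lambda^2_0(\C^4)\cong$ (standard $\g{so}(5)$-module) is recorded.
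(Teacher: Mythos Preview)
Your primary approach is correct and is exactly what the paper does: it invokes the exceptional isomorphism $\g{sp}(4)\cong\g{so}(5)$, identifies $M(\delta_1+\delta_2)$ with the standard $5$-dimensional $\g{so}(5)$-module, and then cites the classical decomposition of $\sS^k(\C^5)$ into spherical harmonics. Your alternative dimension-counting argument is a valid second route, but it is not needed here.
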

\begin{proof}
Note that $\g{sp}(4)\cong \g{so}(5)$ and
$M(\delta_1+\delta_2)$ is the standard $5$-dimensional representation of  $\g{so}(5)$. The statement now follows from the classical theory of spherical 
harmonics (for example see 
\cite[Thm 5.6.11]{GoodmanWallach}).
\end{proof}

The next proposition proves  Theorem \ref{thm:compl-red} in Case V. 
\begin{prp}
Assume that $J\cong\mathit{F}$. Then 
$\sP^d(V)$ is a multiplicity-free direct sum of irreducible $\g g$-modules with $\g b$-highest weights in $\mathcal E_d$, where
$\mathcal E_d$ is as in Definition \ref{def-Edwts}.  
\end{prp}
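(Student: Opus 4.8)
The plan is to show two things: first, that every irreducible $\g g$-summand of $\sP^d(V)$ has $\g b$-highest weight in $\mathcal E_d$, and second, that each weight in $\mathcal E_d$ occurs with multiplicity exactly one. The starting point is the $\g g$-module isomorphism $\sP^d(V)\cong\sS^d(V^*)$ together with the explicit list of weights of $V^*_\eev$ and $V^*_\ood$ recorded above. The ``easy'' containment — that the vectors in $\mathcal W_d$ are genuinely nonzero $\g b$-highest weight vectors and hence produce summands with highest weights in $\mathcal E_d$ — follows from the three preceding lemmas: the vectors $u$, $w_2$ lie in $\sS(V^*_\eev)$ so their products do not vanish, and $w_s$ is a typical $\g b$-highest weight vector of the stated weight. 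The harder direction is the reverse containment plus the multiplicity-one statement.

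For the reverse containment, I would argue by restriction to the even subalgebra $\g g_\eev$, using the map $\iota:\g{sp}(4)\to\g g_\eev$ from the paragraph before Lemma~\ref{lem:ddd-1d-1}. The key observation is that $V^*$ decomposes into a small number of $\g m$-isotypic pieces, and $\sS^d(V^*)\cong\sS(V^*_\eev)\otimes\Lambda(V^*_\ood)$ as $\g g_\eev$-modules, so one can enumerate the $\g m$-highest weights that can possibly occur using Lemma~\ref{sp4lem-SC} (spherical harmonics for $\g{so}(5)\cong\g{sp}(4)$) together with the standard Clebsch--Gordan/Pieri rules for $\g{sp}(4)$ on the $\Lambda(V^*_\ood)$ factor. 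Each candidate $\g g$-highest weight $\nu$ appearing in $\sS^d(V^*)$ must restrict to a dominant $\g g_\eev$-weight of $\sS^d(V^*)$, and the $\eps_1$- and $\zeta$-eigenvalues are rigidly controlled by the homogeneity degree $d$ (every generator of $V^*$ contributes $\zeta$ and contributes $\eps_1$, $2\eps_1$, or $3\eps_1$); matching these constraints against the list \eqref{d+2s-4eps1+bd} and the weight $d\eps_1+d\delta_1+d\delta_2+d\zeta$ should pin the highest weight down to $\mathcal E_d$. One must be careful that no atypical highest weight survives — but this was already settled in Case~IV-style reasoning: typical modules split off, and a dimension/character count (comparing $\dim\sS^d(V^*)$ with $\sum_{\nu\in\mathcal E_d}\dim V_\nu$) forces every summand to be one of the typical modules we have exhibited, leaving no room for an atypical (hence indecomposable-but-reducible) Kac module.

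For multiplicity-freeness, the cleanest route is to compare graded dimensions: show $\dim_q\sS^d(V^*)=\sum_{\nu\in\mathcal E_d}\dim_q V_\nu$, where $\dim_q$ records the $\g h_\eev$-character, so that the sum of the exhibited irreducibles already exhausts $\sP^d(V)$, and then observe that the weights in $\mathcal E_d$ are pairwise distinct (immediate from inspecting \eqref{d+2s-4eps1+bd}: the parameters $q,r,s$ are recovered from the $\eps_1$-, $\delta_1{+}\delta_2$-, and $\zeta$-coordinates), forcing the decomposition to be multiplicity-free. The characters $\dim_q V_\nu$ of the typical modules $V_\nu$ are given by the Kac character formula, so this is a finite bookkeeping computation; alternatively one can induct on $d$ using $\sS^d(V^*)\cong\sS^{d-1}(V^*_\eev)\otimes V^*_\ood\ \oplus\ \sS^d(V^*_\eev)$ in each parity and Lemma~\ref{sp4lem-SC} for the even part, peeling off the contribution of $w^d$ (the $\g b_{2|4}$-highest weight of degree $d$) at each step as in the proof of Proposition~\ref{prp-D21tKss}. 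The main obstacle I anticipate is the $\g m$-branching enumeration in the reverse-containment step: one needs to verify that the Pieri-type expansion of $\sS(V^*_\eev)\otimes\Lambda(V^*_\ood)$ restricted to $\g m$ produces \emph{no} $\g m$-highest weights beyond those forced by $\mathcal E_d$, and this requires knowing the full $\g m$-socle filtration of each typical $V_\nu$ precisely enough to account for every even weight — Lemmas~\ref{lem:ddd-1d-1} and~\ref{sp4lem-SC} are exactly the tools assembled for this, so the argument should close, but the combinatorics of ruling out spurious summands is where the care is needed.
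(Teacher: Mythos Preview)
Your overall strategy---exhibit the highest weight vectors in $\mathcal W_d$, observe that typical summands split off, and then close the argument by a dimension count---matches the paper's. The execution differs in one key respect, and it is exactly the point you flag as the ``main obstacle''.

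You propose to rule out extra summands by a full $\g m$-branching enumeration of $\sS(V^*_\eev)\otimes\Lambda(V^*_\ood)$ via Pieri rules, or equivalently by a full $\g h_\eev$-character comparison. The paper avoids this entirely. It compares only the \emph{even} dimensions: using the Weyl formula for $\g{sp}(4)$, Lemma~\ref{lem:ddd-1d-1} for the single atypical module, and Lemma~\ref{sp4lem-SC} for the spherical-harmonic decomposition, one checks the closed-form identity
\[
\sum_{\gamma\in\mathcal E_d}\dim (W_\gamma)_\eev=\dim\sS^d(V^*)_\eev.
\]
This forces any further irreducible subquotient $W$ to satisfy $W_\eev=\{0\}$. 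But then, since $[\g g,\g g]$ is generated by its odd part, such a $W$ would be a trivial $[\g g,\g g]$-module, hence have zero $\g h\cap[\g g,\g g]$-weight; a direct inspection of the weights of $V^*$ shows that $\sS^d(V^*)$ has no such weight, giving a contradiction. Thus the composition factors are exactly the $W_\gamma$ for $\gamma\in\mathcal E_d$; the typical ones split off, and what remains has a single composition factor (the atypical $W_{d\eps_1+d\delta_1+d\delta_2+d\zeta}$), hence is irreducible.

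So your approach is not wrong, but the branching bookkeeping you anticipate is unnecessary: the even-dimension count plus the ``purely odd $\Rightarrow$ trivial'' observation replaces it with a short closed-form computation.
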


\begin{proof}

For every $\gamma\in \mathcal E_d$,
we denote the irreducible $\g g$-module with $\g b$-highest weight $\gamma$ by $W_\gamma$.
If $\gamma\neq d\eps_1+d\delta_1+d\delta_2+d\zeta$, then
by setting 
$a:=s-2$ and $b:=d-2r-s$
in \eqref{d+2s-4eps1+bd}
we can express $\gamma$  as 
\begin{equation}
\label{eq:hwabessa}
\gamma:=(d+2a)\eps_1+b\delta_1+b\delta_2+d\zeta,
\end{equation}
where $0\leq a\leq d-2$, $0\leq b\leq d-a-2$ and 
 $b\equiv d-a\ (\text{mod}\ 2)$.
It is straightforward to verify that every $\gamma$ 
of the form 
\eqref{eq:hwabessa}
is typical. 
 Since typical submodules split off as direct summands, in order to prove the assertion of the proposition it suffices to show that the only irreducible subquotients of $\sP^d(V)$ are those whose $\g b$-highest weight vectors are in $\mathcal W_d$.

For  $\gamma$ of the 
form \eqref{eq:hwabessa}, typicality of $\gamma$ implies that 
 $W_\gamma$ is a Kac module, and therefore $\dim (W_\gamma)_\eev=8\dim M(b\delta_1+b\delta_2)$. Set $N_k:=
\dim M(k\delta_1+k\delta_2)$. By the Weyl character formula  for $\g{sp}(4)$, 
\begin{equation}
\label{Welysp4}
N_k=\frac{(2k+3)(k+2)(k+1)}{6}
\text{ for }k\geq 0.
\end{equation}
From \eqref{Welysp4}, Lemma \ref{lem:ddd-1d-1}, and 
Lemma \ref{sp4lem-SC}
 it follows that
\begin{align*}
\sum_{\gamma\in\mathcal E_d}\dim (W_\gamma)_\eev&=
N_d+N_{d-1}+
8\sum_{a=0}^{d-2}
\sum_{\tiny\begin{array}{l}
0\leq b\leq d-a-2\\
b\equiv d-a\ (\text{mod}\ 2)
\end{array}}\dim M(b\delta_1+b\delta_2)\\
&
=\frac{(d+1)(2d^2+4d+3)}{3}+8\sum_{a=0}^{d-2}
\dim \sS^{d-a-2}(\C^5)\\
&=\frac{(d+1)(2d^2+4d+3)}{3}+8\dim\sS^{d-2}(\C^6)\\
&=\frac{(d+1)(2d^2+4d+3)}{3}+\frac{(d+3)(d+2)(d+1)d(d-1)}{15}=\dim \sS^d(V^*)_\eev.
\end{align*}
The relation
$\sum_{\gamma\in\mathcal E_d}\dim (W_\gamma)_\eev
=\dim \sS^d(V^*)_\eev$ implies that if $\sS^d(V^*)$ has an irreducible subquotient $W$ such that $W\not\cong W_\gamma$ for all $\gamma\in\mathcal E_d$, then 
$W_\eev=\{0\}$. Since $[\g g,\g g]$ is generated by its odd elements, $W$ must be a trivial $[\g g,\g g]$-module. However, the $\g h\cap[\g g,\g g]$-weights of $V^*$ are 
$-\eps_1\pm\delta_1\pm \delta_2$, $-\eps_1$, $-3\eps_1$,  $-2\eps_1\pm\delta_1$, and $-2\eps_1\pm\delta_2$, and it is straightforward to verify that all of the $\g h\cap[\g g,\g g]$-weights of $\sS^d(V^*)$ are nonzero, which is a contradiction. 
Consequently, $\sS^d(V^*)\cong\bigoplus_{\gamma \in\mathcal E_d}W_\gamma$.
\end{proof}

\subsection{Case VI}

For $d\geq 0$ we have an 
isomorphism of $\g g$-modules
\begin{equation}
\label{eqCVI-sPdLam}
\sP^d(V)\cong \sS^d(\Pi(\Lambda^2(\C^{n|n}))^*)
\cong \left(\Pi^d\Lambda^d(\Lambda^2(\C^{n|n}))\right)^*.
\end{equation}
Therefore in Case VI, Theorem \ref{thm:compl-red} follows
from the following proposition.
\begin{prp}
For every $d\geq 0$ we have
\[
\left(\Pi^d\Lambda^d(\Lambda^2(\C^{n|n}))\right)^*
\cong \bigoplus_{\mu\in\EuScript{DP}_d(n)}
V_\mu,
\]
where
$V_\mu$ denotes the $\gl(n|n)$-module with
$\g b_{n|n}^{\mathrm{mx}}$-highest weight
$-\sum_{i=1}^n\mu_i(\eps_i+\delta_i)$. 
\end{prp}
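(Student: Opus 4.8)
The plan is to decompose the $\GL(n|n)$-module $\Lambda^d(\Lambda^2(\C^{n|n}))$ using a super-analogue of the classical plethysm $\Lambda^\bullet(\Lambda^2(\C^N))$ for ordinary general linear groups, and then track the effect of the parity shift $\Pi^d$, dualization, and the change of Borel $\g b_{n|n}^{\mathrm{mx}}$. Recall that for an ordinary vector space $\C^N$ one has the classical identity $\Lambda^\bullet(\Lambda^2(\C^N))\cong\bigoplus_\mu S^{[\mu]}(\C^N)$, where $\mu$ ranges over partitions whose transpose is the ``doubled'' partition associated to a strict partition — equivalently, over partitions of the form $\mu=(\alpha_1,\dots,\alpha_k\,|\,\alpha_1-1,\dots,\alpha_k-1)$ in Frobenius notation, i.e.\ ``symmetric'' Young diagrams of the shifted type. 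In the $\Z/2$-graded setting, the functor $A\mapsto \Lambda^\bullet(\Lambda^2(A))$ applied to a superspace $\C^{n|n}$ is governed by the same combinatorics, but the Schur functors $S^{[\mu]}$ must be interpreted in the super sense, and the condition $\ell(\mu)\leq$ (something) is replaced by the hook condition; because here $\dim V_\eev=\dim V_\ood=n$, I expect the relevant shapes to be exactly the ``staircase-doubled'' diagrams indexed by strict partitions $\mu\in\EuScript{DP}(n)$, and the degree-$d$ piece to pick out $\mu$ with $|\mu|=d$.

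First I would set up the super-plethysm. Working inside the ring of supersymmetric functions (or directly with characters of $\GL(n|n)$), one has the generating-function identity
\[
\sum_{d\ge 0} \mathrm{ch}\,\Lambda^d(\Lambda^2(\C^{n|n}))\, q^d
= \prod_{i<j}\frac{1}{1-q\,z_iz_j}\cdot\prod_i\frac{1}{1-q\,z_i^2}\Big/(\text{odd corrections}),
\]
where the $z_i$ are the Cartan characters of the even part and the odd variables contribute with the opposite sign because $\Lambda$ of an odd space is $\sS$ of its purification. Cauchy-type identities for Schur $P$- and $Q$-functions (e.g.\ $\sum_\mu P_\mu(x)Q_\mu(y)=\prod_{i,j}\frac{1+x_iy_j}{1-x_iy_j}$ and $\prod_{i\le j}\frac{1}{1-x_ix_j}=\sum_\mu$ over doubled shapes) then match this product with $\sum_{\mu\in\EuScript{DP}(n)}q^{|\mu|}\,\mathrm{ch}\,V_\mu$, where $V_\mu$ is the irreducible $\gl(n|n)$-module attached to the staircase-doubled shape of $\mu$. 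Alternatively, and perhaps more cleanly, I would invoke Howe duality: $\Lambda^\bullet(\Lambda^2(\C^{n|n}))$ carries commuting actions of $\gl(n|n)$ and $\g{sp}$ (or an orthosymplectic pair), and the decomposition into irreducibles is read off from the known first occurrences in the super Howe duality for the pair $(\gl(n|n),\g{sp})$, which is exactly parametrized by strict partitions of length $\le n$.

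Second, I would pin down the highest weight. The irreducible summand corresponding to $\mu\in\EuScript{DP}(n)$ has highest weight, with respect to the \emph{standard} Borel, equal to the staircase-doubled partition; applying the chain of odd and even reflections that carries $\g b_{n|n}^{\mathrm{st}}$ to $\g b_{n|n}^{\mathrm{mx}}$ (as recorded in Appendix~\ref{subsec-gl}) converts this to the weight $\sum_{i=1}^n\mu_i(\eps_i+\delta_i)$, using \cite[Lem.~1.40]{CWBook} to track the highest weight through each odd reflection. Finally, dualizing and incorporating the parity shift $\Pi^d$ in \eqref{eqCVI-sPdLam}: dualization sends a $\g b_{n|n}^{\mathrm{mx}}$-highest weight $\nu$ to $-w_0\nu$ for the appropriate longest element, and since the shape is symmetric under the natural duality this simply flips the sign, producing the stated highest weight $-\sum_{i=1}^n\mu_i(\eps_i+\delta_i)$; the parity factor $\Pi^d$ only twists by an overall parity and does not affect the weight, and it is consistent because $V_\mu$ is of type $\mathsf M$ (as noted in Remark~\ref{rem:CapelliEven}).

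The main obstacle I anticipate is the super-plethysm step: the classical identity $\Lambda^\bullet(\Lambda^2)\cong\bigoplus S^{[\mu]}$ does \emph{not} transfer verbatim to superspaces, because Schur functors of superspaces can vanish or collapse depending on the hook shape, and the sign subtleties in $\Lambda$ versus $\sS$ on the odd part must be handled with care. Concretely, I expect to need either (a) a careful verification that for $\C^{n|n}$ — where even and odd dimensions are equal — the only surviving shapes are precisely the staircase-doubled ones indexed by $\EuScript{DP}(n)$, with each appearing with multiplicity one, or (b) an appeal to an existing super Howe-duality statement (for the pair $(\gl(n|n),\g{sp}(\infty))$ acting on $\Lambda(\Lambda^2)$, or equivalently on a polynomial superspace) in the literature, which would make the multiplicity-one and the parametrization immediate. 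Granting that input, matching degrees (the degree-$d$ component corresponds to $|\mu|=d$) and computing the highest weight are routine, and complete reducibility follows because every summand is typical, hence splits off.
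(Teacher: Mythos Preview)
Your overall strategy---decompose the plethysm by Schur-functor combinatorics indexed by strict partitions, then pass through dualization and the Borel change to $\g b_{n|n}^{\mathrm{mx}}$ via odd reflections---is the same as the paper's. The paper executes it concretely: it applies Schur--Weyl--Sergeev duality to $(\C^{n|n})^{\otimes 2d}$ and a superized version of Howe's classical argument to obtain
\[
\Lambda^d(\sS^2(\C^{n|n}))\cong\bigoplus_{\mu\in\EuScript{DP}_d(n)} E_{(\breve\mu)^{\mathrm{st}}_{n|n}},
\]
where $\breve\mu$ is the partition built by nesting hooks with $\mu_i$ boxes in the first row and $\mu_i+1$ in the first column (Frobenius coordinates $(\mu_i-1\,|\,\mu_i)$). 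Dualizing gives $\g b^{\mathrm{op}}_{n|n}$-highest weight $-(\breve\mu)^{\mathrm{st}}_{n|n}$, and an explicit calculation (using \cite[Sec.~2.4.1]{CWBook}) converts this to the $\g b_{n|n}^{\mathrm{mx}}$-highest weight $-\sum_i\mu_i(\eps_i+\delta_i)$. Your character/Cauchy sketch and your proposed route $\g b^{\mathrm{st}}\to\g b^{\mathrm{mx}}$ are minor variants of these same steps; the paper's Schur--Weyl--Sergeev input is exactly the ``existing super Howe-duality statement'' you flag as option (b).

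There is one genuine error in your proposal. The final clause---``complete reducibility follows because every summand is typical, hence splits off''---is false. The modules in question are polynomial $\gl(n|n)$-modules and are in general \emph{atypical}; already for $n=2$ and $\mu=(1)$ one has $\breve\mu=(1,1)$, whose standard highest weight $\eps_1+\eps_2$ is atypical at $\eps_2-\delta_2$. Complete reducibility here does not come from typicality; it comes from the fact that $\Lambda^d(\Lambda^2(\C^{n|n}))$ (or $\Lambda^d(\sS^2(\C^{n|n}))$) is a direct summand of the semisimple module $(\C^{n|n})^{\otimes 2d}$, which is precisely what Schur--Weyl--Sergeev duality provides. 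If you take your character-identity route (a), you would still owe this semisimplicity argument separately, since a Cauchy identity only matches characters; the paper's route (your option (b)) builds it in.
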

\begin{proof}
By 
Schur--Weyl--Sergeev duality, 
\begin{equation}
\label{eq:Sergduall}
(\C^{n|n})^{\otimes 2d}
\cong
\bigoplus_{\mu\in\EuScript H_{2d}(n,n)}
E_{\mu_{n|n}^{\mathrm{st}}}\otimes F_\mu,
\end{equation}
where $E_{\mu_{n|n}^{\mathrm{st}}}$ is the $\gl(n|n)$-module with $\g b_{n|n}^{\mathrm{st}}$-highest weight $\mu_{n|n}^\mathrm{st}$ defined as in \eqref{eq:lammn-stt}
and $F_\mu$ is  the 
$S_{2d}$-module associated  to the partition  $\mu$ in the standard way.

For every $\lambda\in \EuScript{DP}(n)$ we define $\breve\lambda\in\EuScript{H}(n,n)$ 
to be the partition whose Young diagram is  constructed by nesting the $(1,1)$-hooks with $\lambda_i$ boxes in the first row and $\lambda_i+1$ boxes in the first column, where $1\leq i\leq \ell(\lambda)$.
For example, if $\lambda=(4,2,1,0,\ldots)$ 
then $\breve\lambda=(4,3,3,1,0,\ldots)$.
From
\eqref{eq:Sergduall} 
and a superized variation of the proof of \cite[Thm 4.4.4]{HoweSchur} we obtain 
\begin{equation}
\label{Lamkadccps}
\Lambda^d(
\sS^2(\C^{n|n}))\cong
\bigoplus_{\mu\in\EuScript{DP}(n)_d}
E_{(\breve\mu)_{n|n}^\mathrm{st}}.
\end{equation}
By comparing \eqref{eqCVI-sPdLam} and 
\eqref{Lamkadccps}, we obtain that
\[
\sP^d(V)\cong 
\bigoplus_{\mu\in\EuScript{DP}(n)_d}
W_\mu,
\]
where $W_\mu$ is the $\gl(n|n)$-module with $\g b^{\mathrm{op}}_{n|n}$-highest weight  $-(\breve\mu)_{n|n}^{\mathrm{st}}$.

Set $\lambda:=\breve\mu$, and recall that $\lag x\rag :=\max\{x,0\}$ for $x\in\R$. A straightforward calculation based on the method of 
\cite[Sec. 2.4.1]{CWBook}
shows that the 
$\g b_{n|n}^\mathrm{mx}$-highest weight of 
$W_\mu$ is
\[
-\sum_{i=1}^n\lag \lambda_i-i+1\rag\eps_i
-
\sum_{i=1}^n\lag \lambda_i'-i\rag\delta_i=-\sum_{i=1}^n\mu_i(\eps_i+\delta_i).  
\qedhere\]
\end{proof}

\section{Surjectivity of the Harish-Chandra map}
\label{Sec:surjectivity}

This section is devoted to the proof of Theorem \ref{thm:surjZg}. The proof is divided into two parts: the case $J\not\cong \mathit{F}$, given in Proposition 
\ref{prp-JresHCZiLJ},
and the case
$J\cong \mathit{F}$, 
given in  Proposition 
\ref{prp:typeFresHCsseqtauJ*}.

The associative superalgebra $\sPD(V)$  has a natural filtration 
$\left(\sPD^{(i)}(V)\right)_{i\geq 0}$
by order of operators.  
We denote the natural degree filtrations of $\sP(\g h^*)$ and $\sP(\aomega)$  by
$\left(\sP^{(i)}(\g h^*)\right)_{i\geq 0}$ and 
$\left(\sP^{(i)}(\aomega)\right)_{i\geq 0}$, respectively. 
 
In the following lemma,  $B_t(z):=\sum_{i=0}^t\frac{1}{i+1}
\sum_{j=0}^i(-1)^j {i \choose j}(z+j)^t$ denotes
the Bernoulli polynomial of degree $t$. 

\begin{lem}\label{lem:htBt-theL}
Let $k,k'\in\N$, and let $\theta$ be a complex number   such
that $\theta\not\in\Q^{\leq 0}$. 
For $t\in\N$, let $h_t(x,y,\theta)\in\Lambda_{k,k',\theta}^\natural$ be defined by
\begin{align*}
\textstyle 
h_t(x,y,\theta)&
\textstyle 
:=\sum_{i=1}^k 
B_t\left(
x_i+\frac{1}{2}
\right)
+(-\theta)^{t-1}\sum_{j=1}^{k'}
B_t\left(
y_j+\frac{1}{2}\right)
.
\end{align*}
Finally, let $\left(
\Lambda_{k,k',\theta}^{\natural,(i)}
\right)_{i\geq 0}
$ denote the degree filtration of 
$\Lambda_{k,k',\theta}^\natural$.
Then 
for every $d\geq 0$ we have
\begin{equation}
\label{eq:btBt}
\Lambda_{k,k',\theta}^{\natural,(d)}
=\spn_\C\left\{
h_1^{m_1}\cdots h_d^{m_d}\ :\
m_1,\ldots,m_d\in\Z^{\geq 0}\text{ and }\sum_{j=1}^d jm_j\leq d
\right\}.
\end{equation}
\end{lem}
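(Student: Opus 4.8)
The goal is to show that the algebra $\Lambda^\natural_{k,k',\theta}$ is, in filtered degrees, generated by the power-sum-like elements $h_t$. Since the right-hand side of \eqref{eq:btBt} is visibly contained in $\Lambda^{\natural,(d)}_{k,k',\theta}$ (each $h_t$ has degree $t$ and lies in the algebra), the content is the reverse inclusion, and by a dimension count it suffices to prove that the $h_t$ generate $\Lambda^\natural_{k,k',\theta}$ as a $\C$-algebra, i.e.\ that the (non-filtered) span of the monomials $h_1^{m_1}\cdots h_d^{m_d}$ with $\sum j m_j \le d$ has dimension at least $\dim \Lambda^{\natural,(d)}_{k,k',\theta}$. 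The first step is therefore to identify $\dim \Lambda^{\natural,(d)}_{k,k',\theta}$: by Theorem~\ref{thm:SVSP*} the shifted super Jack polynomials $\SPP_\lambda$ for $\lambda\in\EuScript{H}(k,k')$ form a basis of $\Lambda^\natural_{k,k',\theta}$, and since $\deg\SPP_\lambda \le |\lambda|$ with the top-degree term a genuine super Jack polynomial of degree exactly $|\lambda|$, the dimension of $\Lambda^{\natural,(d)}_{k,k',\theta}$ equals $\#\{\lambda\in\EuScript H(k,k'):|\lambda|\le d\}$. Equivalently, the associated graded of $\Lambda^\natural_{k,k',\theta}$ for the degree filtration is the algebra $\Lambda_{k,k'}$ of super-symmetric functions (separately symmetric in $x$, $y$, satisfying the super cancellation condition $f$ independent of $t$ along $x_i=-\theta y_j \leftrightarrow$ the standard $(k,k')$-hook stability), whose Hilbert series is $\sum_{\lambda\in\EuScript H(k,k')} q^{|\lambda|}$.

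**Reduction to the graded level.** Because passing to associated graded is exact and the $h_t$ have pure degree $t$, it suffices to show that the images $\bar h_t$ of $h_t$ in $\operatorname{gr}\Lambda^\natural_{k,k',\theta}\cong\Lambda_{k,k'}$ generate that graded algebra. Now $B_t(x+\tfrac12)$ has leading term $x^t$, so $\bar h_t$ is, up to lower-degree-in-the-graded-sense corrections that vanish, the deformed power sum
\[
p_t^{(\theta)} := \sum_{i=1}^k x_i^t + (-\theta)^{t-1}\sum_{j=1}^{k'} y_j^t,
\]
which is exactly the generator of the ring of super-symmetric functions in the $\mathsf A_\kappa(k-1,k'-1)$ normalization (with $\kappa=-\theta$): these $p_t^{(\theta)}$, $t\ge 1$, are algebraically independent and generate $\Lambda_{k,k'}$. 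This is a standard fact about super-symmetric functions — it follows e.g.\ from Sergeev--Veselov \cite{SVSuperJack} or can be proven directly: the deformed power sums satisfy the super-cancellation condition, and an induction on degree using that $p_t^{(\theta)}$ has $x$-leading term $\sum x_i^t$ shows any super-symmetric $f$ is a polynomial in them. Hence $\operatorname{gr}\Lambda^\natural_{k,k',\theta}$ is generated in each degree $d$ by the monomials $\bar p_1^{m_1}\cdots\bar p_d^{m_d}$ with $\sum j m_j \le d$, and lifting back, $\Lambda^{\natural,(d)}_{k,k',\theta} = \spn_\C\{h_1^{m_1}\cdots h_d^{m_d}:\sum j m_j\le d\}$, which is \eqref{eq:btBt}.

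**The main obstacle.** The one genuinely non-formal point is verifying that $h_t$ actually lies in $\Lambda^\natural_{k,k',\theta}$ — that is, that the specific combination of Bernoulli polynomials $\sum_i B_t(x_i+\tfrac12) + (-\theta)^{t-1}\sum_j B_t(y_j+\tfrac12)$ satisfies the super cancellation relation $f(x+\tfrac12\sfe_i,y-\tfrac12\sfe_j)=f(x-\tfrac12\sfe_i,y+\tfrac12\sfe_j)$ on each hyperplane $x_i+\theta y_j=0$. This hinges on the defining identity of Bernoulli polynomials, $B_t(z+1)-B_t(z)=t z^{t-1}$: the difference between the two sides of the cancellation relation telescopes to $t\big(x_i^{t-1} - (-\theta)^{t-1}(-y_j)^{t-1}\big)$ — wait, more precisely to $t\,x_i^{t-1} + (-\theta)^{t-1}\,t\,(-y_j)^{t-1} \cdot(-1) = t\big(x_i^{t-1} - (-\theta y_j)^{t-1}\cdot(-1)^{?}\big)$, and one checks this vanishes exactly when $x_i = -\theta y_j$, using the sign bookkeeping from the $(-\theta)^{t-1}$ prefactor. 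Getting this sign exactly right (and confirming the shift by $\tfrac12$ in the arguments is what makes the $\sfe_i/2$ shifts in the cancellation condition work out) is the only delicate computation; everything else is the dimension count plus the standard generation statement for super power sums. I expect the writeup to spend most of its length on this Bernoulli identity and then invoke Theorem~\ref{thm:SVSP*} for the dimension count.
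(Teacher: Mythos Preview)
Your approach is correct and takes a somewhat different route from the paper's. The paper does not pass to the associated graded of $\Lambda^\natural_{k,k',\theta}$ directly; instead it invokes the Sergeev--Veselov epimorphism $\varphi^\natural:\Lambda_\theta\to\Lambda^\natural_{k,k',\theta}$ from the inverse-limit algebra of $\theta$-shifted symmetric functions, uses that $\mathrm{gr}(\Lambda_\theta)$ is the full ring of symmetric functions (so the shifted Bernoulli sums $b_t\in\Lambda_\theta$ generate it in the filtered sense, the ordinary power sums being genuinely algebraically independent there), and then pushes forward, noting that $\deg\bigl(\varphi^\natural(b_t)-h_t\bigr)<t$. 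Your route stays at the finite-variable level throughout: you identify $\mathrm{gr}(\Lambda^\natural_{k,k',\theta})$ with the algebra of super-symmetric polynomials via the $\SPP_\lambda$ basis of Theorem~\ref{thm:SVSP*}, and then use that the deformed power sums $p_t^{(\theta)}$ generate that algebra. The paper's packaging is a bit cleaner because in $\Lambda_\theta$ the generators really are algebraically independent; your asserted algebraic independence of the $p_t^{(\theta)}$ in finitely many variables is false (compare Hilbert series: all partitions versus $(k,k')$-hook partitions), but you only use generation, so this is a harmless slip.

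For the Bernoulli verification that $h_t\in\Lambda^\natural_{k,k',\theta}$, your instinct is right and the computation is clean once the signs are tracked. The difference $h_t(x+\tfrac12\sfe_i,y-\tfrac12\sfe_j)-h_t(x-\tfrac12\sfe_i,y+\tfrac12\sfe_j)$ equals
\[
\bigl(B_t(x_i+1)-B_t(x_i)\bigr)+(-\theta)^{t-1}\bigl(B_t(y_j)-B_t(y_j+1)\bigr)=t\,x_i^{t-1}-(-\theta)^{t-1}t\,y_j^{t-1},
\]
which vanishes on $x_i=-\theta y_j$ since $(-\theta y_j)^{t-1}=(-\theta)^{t-1}y_j^{t-1}$. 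So the ``main obstacle'' you flagged dissolves immediately; the paper simply asserts $h_t\in\Lambda^{\natural,(t)}_{k,k',\theta}$ without comment.
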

\begin{proof}
Clearly 
$h_t\in \Lambda_{k,k',\theta}^{\natural,(t)}$ for  
$t\in\N$. Therefore it remains to prove 
that in
\eqref{eq:btBt},
the left hand side is a subset of  the right hand side.
For $N\in\N$, let $\Lambda_{N,\theta}$ denote
the $\C$-algebra of polynomials $f(z_1,\ldots,z_N)$ which are 
symmetric in $z_i+\theta(1-i)$, and 
let $\Lambda_\theta:=\varprojlim\Lambda_{N,\theta}$ be the algebra of $\theta$-shifted symmetric functions.
Let 
$\left(\Lambda_{\theta}^{(i)}\right)_{i\geq 0}$
be the  degree filtration of $\Lambda_\theta$.
For $t\in\N$, let $b_t(z,\theta)\in\Lambda^{(t)}_\theta$ be defined by 
\[
b_t(z,\theta):=
\sum_{i=1}^\infty
\left[
B_t\left(
z_i+\frac{1}{2}+\theta\left(
\frac{1}{2}-i
\right)
\right)
-B_k
\left(
\frac{1}{2}+\theta\left(\frac{1}{2}-i\right)
\right)
\right].
\]
In 
\cite[Sec. 6]{SVSuperJack},
Sergeev and Veselov construct an epimorphism  $\varphi^\natural:\Lambda_\theta\to\Lambda^{\natural}_{k,k',\theta}$ such that
\begin{equation}
\label{prpsofphinat}
\varphi^\natural\left(\Lambda_\theta^{(t)}\right)=\Lambda_{k,k',\theta}^{\natural,{(t)}}\text{ and } 
\deg\left(\varphi^\natural(b_t)-h_t\right)<t
\text{ for }t\in\N.
\end{equation}
Since $\mathrm{gr}(\Lambda_{\theta})$ is isomorphic to the algebra of symmetric functions, the formal series $b_t^\natural$ constitute algebraically independent generators of $\Lambda_\theta$. Consequently,  
\begin{equation}
\label{eq:lamthedd}
\Lambda_\theta^{(d)}\sseq\spn_\C
\left\{b_1^{m_1}
\cdots
b_d^{m_d}
\ :\ 
m_j\in\Z^{\geq 0}\text{ for $j\geq 1$  and }
\sum_{j=1}^djm_j\leq d
\right\}.
\end{equation} 
From \eqref{eq:lamthedd} and \eqref{prpsofphinat}
 it follows that 
$\Lambda_{k,k',\theta}^{\natural,{(d)}}$ is a subset of the right hand side of 
\eqref{eq:btBt}.
  \end{proof}

\begin{prp}
\label{prp-JresHCZiLJ}
Assume that  $J\not\cong\mathit{F}$. 
Then   
\begin{equation}
\label{resHCsseqtauJ}
\mathsf{res}\left(\HC\left(\bfZ^{(i)}(\g g)\right)\right)= \tau_J^*\left(\Lambda_J^{(i)}\right)\text{ for }i\geq 0.
\end{equation}
\end{prp}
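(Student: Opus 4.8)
The plan is to reduce \eqref{resHCsseqtauJ} to an explicit computation of the Harish-Chandra images of a generating set of $\bfZ(\g g)$, and then match them up with the generating set $\{h_t\}$ of $\Lambda_J$ provided by Lemma \ref{lem:htBt-theL} (in the type $\mathsf A$ cases) or with an analogous generating set of $\Gamma_{\sfr_J}$ (in the type $\mathsf Q$ cases). Since the claimed equality is filtered, it suffices to prove the two inclusions $\mathsf{res}(\HC(\bfZ^{(i)}(\g g)))\sseq \tau_J^*(\Lambda_J^{(i)})$ and $\tau_J^*(\Lambda_J^{(i)})\sseq \mathsf{res}(\HC(\bfZ^{(i)}(\g g)))$ for every $i$. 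The first inclusion follows from the classical Harish-Chandra theory for the relevant Lie superalgebras: the image $\HC(\bfZ(\g g))$ lands in the algebra of Weyl-group-invariant (resp.\ supersymmetric) polynomials on $\g h_\eev^*$, and then restriction to $\aomega$ together with the definition of $\tau_J$ identifies this with a subalgebra of $\tau_J^*(\Lambda_J)$; the filtration degrees are preserved because $\HC$ is filtration-nonincreasing and $\tau_J$ is affine linear. So the real content is the reverse inclusion, i.e.\ surjectivity onto $\tau_J^*(\Lambda_J^{(i)})$.

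For the reverse inclusion I would argue case-by-case according to Cases I--IV, VI, VII of Tables \ref{tbl-JtypeA} and \ref{tbl-JtypeQ}. In each case $\g g$ is a direct sum of one or two copies of $\gl(r|s)$ or of $\g{gosp}$ or of $\g q(r)$, and for these Lie superalgebras one has an explicit family of central elements — the super-analogues of the Gelfand invariants, or equivalently the elements obtained via the Harish-Chandra isomorphism from the power sums / Bernoulli-type supersymmetric functions. Concretely, for $\gl(r|s)$ there are central elements $z_t\in\bfZ^{(t)}(\g g)$ with $\HC(z_t)$ equal (up to lower-order terms) to the $t$-th super power sum $\sum x_i^t-(-1)^{?}\sum y_j^t$ in the appropriate shifted coordinates; a standard computation shows $\HC(z_t)$ is, after the substitution encoded by $\tau_J$, exactly $h_t(x,y,\theta_J)$ plus terms of lower degree. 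Because the $h_t$ generate $\Lambda_{\sfr_{J,+},\sfr_{J,-},\theta_J}^\natural$ by Lemma \ref{lem:htBt-theL}, and because a set of filtered generators whose leading symbols generate the associated graded also generates the filtered algebra, one gets $\tau_J^*(\Lambda_J^{(i)})\sseq\mathsf{res}(\HC(\bfZ^{(i)}(\g g)))$ by an induction on $i$: given a target element of $\tau_J^*(\Lambda_J^{(i)})$, write it in terms of the $h_t$, lift each $h_t$ to $\HC(z_t)$, form the corresponding product of the $z_t$, and correct the lower-order discrepancy using the inductive hypothesis. The type $\mathsf Q$ cases (VI and VII) are handled the same way, with $\Gamma_{\sfr_J}$ in place of $\Lambda^\natural$ and with the central elements of $\g q(n)$ (e.g.\ Sergeev's or Nazarov's Capelli elements, cf.\ the remark after Theorem \ref{thm-main-opVla}) whose Harish-Chandra images are the odd power sums; one needs the analogue of Lemma \ref{lem:htBt-theL} stating that the images of these power sums generate $\Gamma_{\sfr_J}$, which is classical.

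The main obstacle I anticipate is bookkeeping the normalizations: one must check that the affine-linear change of variables $\tau_J$ recorded in Table \ref{tbl-3} is exactly the one under which the naive Harish-Chandra image of a Gelfand-type invariant becomes the shifted power sum $h_t$ with parameter $\theta_J=-\kappa$, including getting the $\rho$-shift, the sign $(-\theta)^{t-1}$ on the $y$-variables, and the scaling of the $\udl\delta_j$ versus $\udl\eps_i$ correct. This is where the explicit realizations of $\g g$, $\g k$, $\g b$ from Section \ref{sec-BorelKemd} and the bilinear-form computations giving $\theta_J$ in each case are used; in particular the case $J\cong \mathit F$ must be excluded precisely because there $\g g\cong\g{gosp}(2|4)$ has rank too small for the would-be generators $\HC(z_t)$ to be algebraically independent up to the needed degree, so the inductive lifting argument breaks — that failure is the content of the complementary Proposition \ref{prp:typeFresHCsseqtauJ*}. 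A secondary technical point is that for $\g{gosp}$ (Cases III and IV) the centre is slightly larger than for $\g{osp}$ because of the extra grading element, and one must make sure the restriction to $\aomega$ kills the correct directions so that what survives is still all of $\tau_J^*(\Lambda_J)$ and no more; this is a direct check using the explicit $\g a_\eev$ computed in Section \ref{sec-BorelKemd}.
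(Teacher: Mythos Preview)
Your overall strategy---compute the restricted Harish-Chandra images of a generating family of central elements case by case and compare with the generators $h_t$ of $\Lambda_J$ furnished by Lemma~\ref{lem:htBt-theL}---is exactly what the paper does, and your filtered lifting argument is the right mechanism. For Cases I, II, III, VI, VII this goes through essentially as you describe.

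There is, however, a genuine subtlety in Case IV that your sketch does not cover. You assert that after applying $\tau_J$ the Harish-Chandra image of the $t$-th Gelfand-type invariant equals $h_t(x,y,\theta_J)$ plus lower-order terms. In Case IV one has $\g g\cong\gl(1|2)$, whose Harish-Chandra image naturally produces supersymmetric polynomials with the $\gl$-parameter (after restriction to $\aomega$ one sees something agreeing with $h_k(\cdot,\cdot,\tfrac12)$ up to lower order), \emph{not} with the deformation parameter $\theta_J=-1/t$, which enters only through the affine map $\tau_J$. So the identity ``$(\tau_J^*)^{-1}(f_k)=h_k(x,y,\theta_J)+\text{lower order}$'' is not a standard computation and need not hold in any obvious sense. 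The paper circumvents this by a dimension count: it first uses Lemma~\ref{lem:htBt-theL} at the auxiliary value $\theta=\tfrac12$ to compute $\dim W_k=|\EuScript H_k(1,1)|$ for the span $W_k$ of the restricted Harish-Chandra images up to degree $k$, then separately checks $(\tau_J^*)^{-1}(f_k)\in\Lambda_{1,1,-1/t}^{\natural,(k)}$, and concludes equality since $\dim \Lambda_{1,1,-1/t}^{\natural,(k)}$ is also $|\EuScript H_k(1,1)|$. Your inductive lifting scheme needs this dimension argument (or an equivalent) to close in Case IV.

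Two smaller corrections. First, Case IV has $\g g\cong\gl(1|2)$, not $\g{gosp}$; the $\g{gosp}$ cases are III and V. Second, your diagnosis of why Case V fails (``rank too small for algebraic independence'') is not the actual obstruction: in Case V the restricted Harish-Chandra image \emph{is} contained in $\tau_J^*(\Lambda_J)$, but it is a proper subalgebra---the paper exhibits an explicit element $h_3(x_1,x_2,y_1,\tfrac32)$ not in the image by evaluating a putative relation at two points and reaching a contradiction.
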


\begin{proof}
The idea of the proof for all of the cases is similar and uses the explicit  Harish-Chandra homomorphism (see for example
\cite[Prop. 2.25]{CWBook}). However, the  explicit calculations in each case is  different. Therefore our analysis is case by case.\\

\noindent\textbf{Case I.} In this case $\g g\cong\gl(m|n)\oplus\gl(m|n)$ and therefore $\bfZ(\g g)\cong
\bfZ(\gl(m|n))\otimes\bfZ(\gl(m|n))$. 
For $i\geq 0$ set $\cA_i:=\sfj\left(\bfZ^{(i)}(\gl(m|n))\otimes 1\right)$ and $\cB_i:=\sfj\left(1\otimes \bfZ^{(i)}(\gl(m|n))\right)$, where
$\sfj:\bfU(\g g)\to\sPD(V)$ is as in \eqref{eq:dfsfj}.
 From\cite[Thm B.1]{SahSalAdv} we obtain \begin{equation}
\label{eq:sfjZZii}
\sfj(\bfZ^{(i)}\left(\g g)\right)=\cA_i=\cB_i=\sPD^{(i)}(V)^{\g g}.
\end{equation}
For $a:=(a_1,\ldots,a_m)\in\C^m$ and $b:=(b_1,\ldots,b_n)\in\C^n$, set 
\begin{equation}
\label{eq:muI}
\mu_{a,b}:=\sum_{i=1}^m
a_i\eps_i+
\sum_{j=1}^n
b_j\delta_j
.
\end{equation}
Then we have
\[
\aomega=\left\{
(-\mu_{a,b},\mu_{a,b})\,:\,
a\in\C^m,\ b\in\C^n\right\}.
\]
From \eqref{eq:sfjZZii} it follows that
\[
\mathsf{res}\left(\HC\left(\bfZ^{(i)}(\g g)\right)\right)
=\mathsf{res}\left(\HC\left(1\otimes \bfZ^{(i)}(\gl(m|n))\right)\right)
.
\]
Now for $k\in\N$ we define $f_k\in\sP^{(k)}(\aomega)$ by 
\[
f_k(\mu_{a,b}):=\sum_{i=1}^m\left(
a_i+\frac{m+1}{2}-\frac{n}{2}-i
\right)^k
+
(-1)^{k-1}
\sum_{j=1}^n\left(
b_j+\frac{m+1}{2}+\frac{n}{2}-j
\right)^k.
\]
From the explicit description of the image of the Harish-Chandra homomorphism of $\gl(m|n)$, given for example in \cite{CWBook},
it follows that 
$
\mathsf{res}\left(\HC\left(1\otimes \bfZ(\gl(m|n))\right)\right)
$ is the $\C$-algebra generated by the $f_k$ for $k\geq 1$.
Furthermore, by graded-surjectivity of the Harish-Chandra homomorphism
of $\gl(m|n)$ (see the proof of  \cite[Thm 2.26]{CWBook}),
 for every $i\geq 0$ we have 
\[
\HC\big(1\otimes \bfZ^{(i)}(\gl(m|n))\big)=
\HC\big(1\otimes \bfZ(\gl(m|n))\big)
\cap\sP^{(i)}(\g h^*).
\]
Consequently,  $f_1,\ldots,f_k\in\mathsf{res}\left(\HC\left(1\otimes \bfZ^{(k)}(\gl(m|n))\right)\right)=
\mathsf{res}\left(\HC\left(\bfZ^{(k)}(\g g)\right)\right)$ for $k\in\N$.
Next observe that for $k\in\N$, 
\[
\left(\tau_J^*\right)^{-1}(f_k)=\sum_{i=1}^m x_i^k+(-1)^{k-1}\sum_{j=1}^n
y_j^k\in\Lambda_J^{(k)}.
\] 
Since $\deg\left(h_k(x,y,1)-\big(\tau_J^*\big)^{-1}(f_k)\right)<k$, equality in \eqref{resHCsseqtauJ} follows from Lemma \ref{lem:htBt-theL}.\\

\noindent\textbf{Case II.} 
For $a:=(a_1,\ldots,a_m)\in\C^m$ and $b:=(b_1,\ldots,b_n)\in\C^n$  
set 
\begin{equation}
\label{eq:muII}
\mu_{a,b}:=
\sum_{i=1}^m
a_i\eps_i+\sum_{j=1}^n b_j(\delta_{2j-1}+\delta_{2j})
.
\end{equation}
Then
\[
\aomega=\left\{\mu_{a,b}\,:\,a\in\C^m,\ b\in\C^n\right\}
,
\] 
and from the explicit description of the Harish-Chandra homomorphism for $\gl(m|2n)$, given for example in \cite{CWBook}, it follows that $\mathsf{res}(\HC(\bfZ(\g g))$ 
is the $\C$-algebra generated by the polynomials
$f_k\in\sP(\aomega)$ for $k\in \N$,
defined as
\begin{align*}
f_k(\mu_{a,b}):&=\sum_{i=1}^m\left(a_i-\frac{m+1}{2}+n+i\right)^k\\
&+(-1)^{k-1}\sum_{j=1}^n\left(b_j-\frac{m+1}{2}-n+2j-1\right)^k+
\left(b_j-\frac{m+1}{2}-n+2j\right)^k.
\end{align*}
Furthermore, by graded-surjectivity of the Harish-Chandra homomorphism of $\g g$ (see the proof of  \cite[Thm 2.26]{CWBook}) we have
$f_1,\ldots,f_k\in\mathsf{res}\left(\HC\left(\bfZ^{(k)}(\g g)\right)\right)$
for $k\in\N$. 
Next observe that
\[
\left(\tau^*_J\right)^{-1}(f_k)=(-2)^k
\left[
\sum_{i=1}^m x_i^k+\frac{(-1)^{k-1}}{2^k}\sum_{j=1}^n
\left(y_j+\frac{1}{2}\right)^k+
\left(y_j-\frac{1}{2}\right)^k
\right]\in\Lambda_J^{(k)}.
\]
Furthermore $\deg\left(
\left(-\frac{1}{2}\right)^k
\left(\tau^*_J\right)^{-1}(f_k)
-h_k(x,y,\frac{1}{2})\right)<k$, so that Lemma
\ref{lem:htBt-theL}
implies equality in \eqref{resHCsseqtauJ}.\\

\noindent \textbf{Case III.}
For $a,b\in\C$ set 
\begin{equation}
\label{eq:muIII}
\mu_{a,b}:=a\eps_1+b\zeta.
\end{equation} Then 
\[
\aomega:=\left\{
\mu_{a,b}\,:\,a,b\in\C
\right\},
\] and from the explicit description of the Harish-Chandra homomorphism for $\g{osp}(m+1|2n)$ (see \cite[Thm 2.26]{CWBook})
it follows that $\mathsf{res}(\HC(\bfZ(\g g)))$ is 
generated by $f_1,f_2\in\sP(\aomega)$,  defined as
\[
f_1(\mu_{a,b}):=b\text{ and }
f_2(\mu_{a,b}):=\left(a+\frac{m+1}{2}-n-1\right)^2.
\]
Furthermore, $f_k\in\mathsf{res}(\HC(\bfZ^{(k)}(\g g)))$ for $k=1,2$ ($f_1$ lies in the image of the center of $\g{gosp}(m+1|2n)$, and $f_2$ lies in the image of the Casimir operator of $\g{osp}(m+1|2n))$. Next observe that 
\[
\left(\tau_J^*\right)^{-1}
\left(f_1
-\frac{m+1}{2}+n+1
\right)=x_1+x_2
\text{ and }
\left(\tau_J^*\right)^{-1}\left(f_2-
\left(f_1
-\frac{m+1}{2}+n+1
\right)^2
\right)=-4x_1x_2.
\]
The statement now follows from the fact that 
$x_1+x_2$ and $x_1x_2$ are algebraically independent generators of 
the algebra of symmetric polynomials in two variables
(see \cite[Sec. I.2]{MacdonaldSymHall}).\\

\noindent\textbf{Case IV.}
For $a,b\in\C$, set \begin{equation}
\label{eq:muIV}
\mu_{a,b}:=a\eps_1+b(\delta_1+\delta_2).
\end{equation} Then 
\[
\aomega=\{\mu_{a,b}\,:\,a,b\in\C\},
\] and 
from the explicit description of the Harish-Chandra 
homomorphism of $\gl(1|2)$
it follows that the $\C$-algebra $\mathsf{res}(\HC(\bfZ(\g g)))$ is generated by $f_k\in\sP^{(k)}(\aomega)$ for $k\in\N$,  where 
\[
f_k(\mu_{a,b}):=(a+1)+(-1)^{k-1}((b-1)^k+b^k).
\]
Setting $\tilde{a}:=\frac{1}{2}(a+1)$ and $\tilde{b}:=b-\frac{1}{2}$, we obtain that in the new coordinates $(\tilde{a},\tilde{b})$, the polynomial $f_k$ is equal to 
\[
\tilde f_k(\tilde{a},\tilde{b}):=2^k\left[
\tilde{a}^k+
\frac{(-1)^{k-1}}{2^k}
\left(
\left(\tilde{b}-\frac{1}{2}\right)^k
+\left(\tilde{b}+\frac{1}{2}\right)^k\right)
\right].
\]
Let $W_k\sseq\sP^{(k)}(\aomega)$ be  defined as 
\[
W_k:=
\spn_\C\left\{f_1^{m_1}\cdots f_k^{m_k}\,:\,
m_j\in\Z^{\geq 0}
\text{ for $j\geq 1$ and }
\sum_{j=1}^k jm_j\leq k\right\}.
\] 
For $k\in\N$, let $h_k(x,y,\frac{1}{2})\in\Lambda_{1,1,\frac{1}{2}}^\natural$ be defined as in 
Lemma \ref{lem:htBt-theL}.
Since
\[
\textstyle
\deg\left(\tilde f_k(\tilde{a},\tilde{b})-h_k(\tilde{a},\tilde{b},\frac{1}{2})\right)<k
\ \text{ for }k\in\N,
\] 
Lemma \ref{lem:htBt-theL} implies that 
$\dim W_k=
\left|\EuScript{H}_k(1,1)\right|$. It is straightforward to verify that $\left(\tau_J^*\right)^{-1}(f_k)\in\Lambda_{1,1,-\frac{1}{t}}^{\natural,(k)}$ for $k\in\N$, so that 
$\left(\tau_J^*\right)^{-1}(W_k)\sseq 
\Lambda_{1,1,-\frac{1}{t}}^{\natural,(k)}$. Since $\dim W_k=\left|
\EuScript{H}_k(1,1)
\right|
=\dim\Lambda_{1,1,-\frac{1}{t}}^{\natural,(k)}$, 
we obtain $\left(\tau_J^*\right)^{-1}(W_k)= 
\Lambda_{1,1,-\frac{1}{t}}^{\natural,(k)}$.
This completes the proof of \eqref{resHCsseqtauJ}.\\

\noindent\textbf{Case VI.} 
For $a:=(a_1,\ldots,a_n)\in\C^n$ and 
$b:=(b_1,\ldots,b_n)\in\C^n$ set
$\tilde{\mu}_{a,b}:=
\sum_{i=1}^n(a_i\eps_i+b_i\delta_i)$.
From the explicit description of the image of the Harish-Chandra homomorphism of $\gl(n|n)$ corresponding to $\g b_{n|n}^\mathrm{mx}$
it follows that $\HC\left(\bfZ(\g g)\right)$ is generated by 
the polynomials $\tilde{f}_k\in\sP(\g h^*)$ for $k\geq 1$, where
$\tilde{f}_k(\tilde{\mu}_{a,b}):=
\sum_{i=1}^n(x_i+\frac{1}{2})^r+(-1)^{r-1}
\sum_{i=1}^n(y_i+\frac{1}{2})^r$.
For $k\in\N$ let $h_k(x,y,1)\in\Lambda_{n,n,1}^\natural$ be defined as in Lemma \ref{lem:htBt-theL}. 
Since $\deg(h_k-\tilde{f}_k)<k$, graded surjectivity of the Harish-Chandra homomorphism of $\gl(n|n)$ (see the proof of \cite[Thm 2.26]{CWBook}) 
and 
Lemma \ref{lem:htBt-theL}
imply that 
\begin{equation}
\label{eq:HCCVI}
\textstyle
\HC\left(\bfZ^{(k)}(\g g)\right)=\spn_\C\left\{\tilde{f}_1^{m_1}\cdots
\tilde{f}_k^{m_k}\,:\,
m_j\in\Z^{\geq 0}\text{ for $j\geq 1$ and }
\sum_{j=1}^k jm_j\leq k \right\}.
\end{equation}
Next for $a:=(a_1,\ldots,a_n)\in\C^n$ set 
\begin{equation}
\label{eq:muVI}
\mu_a:=\sum_{i=1}^na_i(\eps_i+\delta_i).
\end{equation}
Then 
\[
\aomega=\left\{\mu_{a}\,:\,a\in\C^n\right\}.
\]
Set $f_k:=\tilde{f}_k\big|_{\aomega}$, so that 
\[
f_k(\mu_a):=
\sum_{i=1}^n\left(a_i+\frac{1}{2}\right)^k
+(-1)^{k-1}\sum_{j=1}^n\left(a_i-\frac{1}{2}\right)^k.
\]
Also, set $p_r(\mu_a):=\sum_{i=1}^n a_i^r$ for $r\geq 0$. 
Then $f_k$ is a linear combination of the $p_{2i-1}$ 
for $1\leq i\leq \lfloor \frac{k+1}{2}\rfloor$. 
Therefore from  \eqref{eq:HCCVI} it follows that 
 \begin{equation}
 \label{msFFRs}
 \mathsf{res}(\HC(\bfZ^{(k)}(\g g)))=
\spn_\C\left\{\prod_{i\in\N}{p}_{2i-1}^{m_{i}}
\,:\,m_j\in\Z^{\geq 0}\text{  for $j\geq 1$  and }\sum_{j\in\N} (2j-1)m_{j}\leq k
 \right\}.
 \end{equation}
The statement of the proposition  is now a consequence of the fact that the right hand side of 
\eqref{msFFRs} is equal to 
$\tau_J^*\left(\Lambda_J^{(k)}\right)$ (this follows for example from
\cite[Thm 2.11]{Pragacz} and \cite[Rem. 2.6]{Pragacz}).\\


\noindent\textbf{Case VII.} 
The argument is similar to Case I, based on the explicit description of the Harish-Chandra homomorphism of $\g q(n)$ (see \cite[Thm 2.46]{CWBook}).
In this case $\aomega:=\{(-\mu_{a},\mu_{a})\,:\,a\in\C^n\}$, where for $a:=(a_1,\ldots,a_n)\in\C^n$ we define
\begin{equation}
\label{eq:muVII}
\mu_a:=\sum_{i=1}^na_i\eps_i.
\qedhere\end{equation}
\end{proof}

\begin{prp}
\label{prp:typeFresHCsseqtauJ*}
Assume that $J\cong \mathit{F}$. Then 
\[
\mathsf{res}\left(\HC\left(\bfZ(\g g)\right)\right)
\subsetneq
\tau_J^*\left(\Lambda_J\right).
\]
\end{prp}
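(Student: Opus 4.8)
The plan is to exploit the fact that $\g g\cong\g{gosp}(2|4)$ decomposes as a direct sum of ideals $\g g=\g{osp}(2|4)\oplus\C h_\zeta$, with $h_\zeta$ central: every root of $\g g$ lies in $\spn_\C\{\eps_1,\delta_1,\delta_2\}$, so $h_\zeta$ annihilates all root spaces and is central. Hence $\bfU(\g g)\cong\bfU(\g{osp}(2|4))\otimes\C[h_\zeta]$, so $\bfZ(\g g)\cong\bfZ(\g{osp}(2|4))\otimes\C[h_\zeta]$, and since $h_\zeta\in\g h$ it passes unchanged through the Harish--Chandra projection; thus $\HC(\bfZ(\g g))=\HC(\bfZ(\g{osp}(2|4)))\cdot\C[\zeta]$ inside $\sP(\g h^*)$. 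First I would record the inclusion $\mathsf{res}(\HC(\bfZ(\g g)))\sseq\tau_J^*(\Lambda_J)$: this is the easy half, and it follows --- exactly as in the proof of Proposition~\ref{prp-JresHCZiLJ} --- by computing $\HC(\bfZ(\g g))$ from the explicit description of the Harish--Chandra homomorphism of $\g{osp}(2|4)$ in \cite[Thm~2.26]{CWBook} and checking that each such element, once restricted to $\aomega$ and transported by the affine isomorphism $\tau_J$, satisfies the conditions defining $\Lambda^\natural_{\sfr_{J,+},\sfr_{J,-},\theta_J}=\Lambda^\natural_{2,1,3/2}$ (the factor $\C[\zeta]$ lands in $\Lambda_J$ because $\tau_J$ carries $x_1+x_2+y_1$ to the $\zeta$-coordinate on $\aomega$).

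The strict inequality will come from a dimension count in the degree filtration at order $3$. By \cite[Thm~2.26]{CWBook}, $\HC(\bfZ(\g{osp}(2|4)))$ is the algebra of polynomials in the $\rho$-shifted coordinates $\eps_1,\delta_1,\delta_2$ that are invariant under the Weyl group $W=\{\pm1\}^2\rtimes S_2$ (acting on $\delta_1,\delta_2$, trivially on $\eps_1$) and satisfy the supersymmetry conditions along the isotropic odd roots $\pm\eps_1\pm\delta_j$. From this description one checks that $\HC(\bfZ(\g{osp}(2|4)))$ contains no element of degree $1$ --- the only $W$-invariant linear polynomials are $a\eps_1+b$, and $a\eps_1$ violates the supersymmetry condition along $\eps_1-\delta_1$ --- and no element of degree $3$: by $W$-invariance the cubic part would have to be a multiple of $\eps_1(\eps_1^2-\delta_1^2-\delta_2^2)$ (the only shape surviving the vanishing of the leading coefficient in the condition along an odd root), and this in turn fails the condition along another odd root because the corresponding shift of $\rho$ is nonzero. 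Consequently the elements of $\HC(\bfZ(\g g))$ of degree $\le 3$ are spanned by $1,\zeta,\zeta^2,\zeta^3,\Omega,\Omega\zeta$, where $\Omega$ is the ($\rho$-shifted) quadratic Casimir of $\g{osp}(2|4)$; this space is $6$-dimensional, hence so is $\mathsf{res}(\HC(\bfZ(\g g)))\cap\sP^{(3)}(\aomega)$ at most. On the other hand, by Theorem~\ref{thm:SVSP*} the polynomials $\SPP_\lambda$ with $\lambda\in\EuScript H(2,1)$ and $\deg\SPP_\lambda\le|\lambda|$ form a basis of $\Lambda^\natural_{2,1,3/2}$, so $\dim\bigl(\tau_J^*(\Lambda_J)\cap\sP^{(3)}(\aomega)\bigr)=\dim\Lambda^{\natural,(3)}_{2,1,3/2}=\bigl|\{\lambda\in\EuScript H(2,1):|\lambda|\le 3\}\bigr|=7$. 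Together with the inclusion of the first paragraph, this yields $\mathsf{res}(\HC(\bfZ(\g g)))\subsetneq\tau_J^*(\Lambda_J)$.

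The main obstacle is the structural input in the second paragraph: verifying, directly from the explicit Harish--Chandra homomorphism, that $\bfZ(\bfU(\g{osp}(2|4)))$ contributes nothing in degrees $1$ and $3$. Concretely this requires unwinding the Weyl-group action and the Weyl vector $\rho$ for the relevant Borel, and then a short computation showing that the supersymmetry constraint forced along one isotropic odd root by a prospective linear or cubic leading term is incompatible with the constraint along another isotropic odd root (the key point being that $\rho_{\delta_1}\ne\rho_{\delta_2}$, so one cannot annihilate the offending lower-order $t$-terms simultaneously for $\eps_1-\delta_1$ and $\eps_1-\delta_2$). The remaining ingredient, the inclusion $\mathsf{res}(\HC(\bfZ(\g g)))\sseq\tau_J^*(\Lambda_J)$, is also somewhat delicate, since the hyperplanes $x_i+\tfrac32 y_1=0$ defining $\Lambda^\natural_{2,1,3/2}$ must be matched, via $\tau_J$ and with the $\rho$-shift taken into account, against the images in $\aomega$ of the odd-root hyperplanes of $\g{osp}(2|4)$.
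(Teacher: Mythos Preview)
Your strategy—exhibit the inclusion and then use a dimension comparison in degree $\le 3$—is a genuinely different route from the paper's, which instead writes down explicit generators $f_k, F_g, Q$ of $\mathsf{res}(\HC(\bfZ(\g g)))$ (from \cite[Thm~2.26]{CWBook}) and then shows by a point-evaluation contradiction that the specific element $\tau_J^*\big(h_3(x_1,x_2,y_1,\tfrac32)\big)$ does \emph{not} lie in the algebra they generate. Your approach is more conceptual and would yield the same conclusion, but it has a real gap that you should address.

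The problematic step is the passage from ``$\HC(\bfZ(\g g))\cap\sP^{(3)}(\g h^*)$ is $6$-dimensional'' to ``$\mathsf{res}(\HC(\bfZ(\g g)))\cap\sP^{(3)}(\aomega)$ has dimension at most $6$.'' These are not the same thing: the restriction map $\mathsf{res}:\sP(\g h^*)\to\sP(\aomega)$ can \emph{lower} degree, because $\aomega$ is cut out by $\delta_1=\delta_2$ and any element whose leading term is divisible by $(\delta_1-\delta_2)$ restricts to something of strictly smaller degree. Thus an element $p\in\HC(\bfZ(\g g))$ with $\deg p>3$ could a priori satisfy $\deg\mathsf{res}(p)\le 3$, producing a new element of $\mathsf{res}(\HC(\bfZ(\g g)))\cap\sP^{(3)}(\aomega)$ not accounted for by your six-element list. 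To close this gap you would have to show that no such degree drop occurs (in degrees $\le 3$), which is not automatic: the $\rho$-shifted description makes $\HC(\bfZ(\g{osp}(2|4)))$ homogeneous-component-closed only in the shifted coordinates, while $\mathsf{res}$ to $\aomega$ is a graded map only in the \emph{unshifted} ones—and since $\rho_{\delta_1}\neq\rho_{\delta_2}$, these two gradings are genuinely incompatible. One can presumably fill this in (for instance by arguing that any $W_0$-invariant supersymmetric homogeneous polynomial vanishing on $\delta_1=\delta_2$ must be divisible by $(\delta_1^2-\delta_2^2)^2$ and then tracking the resulting degree constraints), but it requires work that your sketch does not supply. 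A smaller issue: your argument that there is no degree-$3$ element—``the cubic part must be $\eps_1(\eps_1^2-\delta_1^2-\delta_2^2)$, and this fails along another odd root because of $\rho$''—is not quite right; in fact the supersymmetry condition along a \emph{single} isotropic root (say $\eps_1-\delta_1$) already forces both the linear and cubic parts to vanish. The paper's evaluation argument sidesteps all of these filtration subtleties by working directly inside the restricted algebra.
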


\begin{proof}
Recall that $\g g\cong\g{gosp}(2|4)$. For $a,b,c\in\C$ set
\begin{equation}
\label{eq:muabc}
\mu_{a,b,c}:=a\eps_1+b\delta_1+b\delta_2+
c\zeta.
\end{equation} 
Then \begin{equation}
\label{eq:muxyzF}
\aomega:=\left\{
\mu_{a,b,c}\,:\,
a,b,c\in\C\right\},
\end{equation} and from the explicit description of the Harish-Chandra homomorphism of $\g{osp}(2|4)$ (see for example \cite[Thm 2.26]{CWBook})
it follows that 
$\mathsf{res}(\HC(\bfZ(\g g)))$ is the $\C$-algebra generated by the following three families of polynomials.
\begin{itemize}
\item[(i)] The polynomials $f_k$ for $k\in\N$, where
$f_k(\mu_{a,b,c}):=
(a+2)^{2k}-(b+2)^{2k}-(b+1)^{2k}$.
\item[(ii)] The polynomials 
\[F_g(\mu_{a,b,c}):=(a+2)
\left(
(b+2)^2-(a+2)^2
\right)
\left(b+1)^2-(a+2)^2
\right)
\left[g\left(a+2,(b+2)^2,(b+1)^2\right)\right],
\]
where $g(\mathsf s,\mathsf t_1,\mathsf t_2)$ is a polynomial in the variables 
$\mathsf s,\mathsf t_1,\mathsf t_2$ which is
symmetric in $\mathsf t_1,\mathsf t_2$.
\item[(iii)]
The polynomial $Q(\mu_{a,b,c}):=c$.
\end{itemize}
By a straightforward calculation based on the above generators we can verify that 
\[
\left(\tau_J^*\right)^{-1}\big(\mathsf{res}(\HC(\bfZ(\g g))\big)\sseq\Lambda_{2,1,\frac{3}{2}}^\natural
.
\]
To complete the proof of the proposition, it suffices to show that  
\begin{equation}
\label{eq:h4notin}
\textstyle h_3(x_1,x_2,y_1,\frac{3}{2})\not\in
\left(\tau_J^*\right)^{-1}\big(\mathsf{res}(\HC(\bfZ(\g g))\big)
, 
\end{equation}
where 
$h_3(x_1,x_2,y_1,\frac{3}{2})\in\Lambda_{2,1,\frac{3}{2}}^\natural$ is defined as in 
Lemma \ref{lem:htBt-theL}.
Consider the change of coordinates
\[
\begin{cases}
\tilde{a}:=a+b+\frac{7}{2}=2x_1+3y_1\\
\tilde{b}:=a-b+\frac{1}{2}=2x_2+3y_1\\
\tilde{c}:=c=x_1+x_2+y_1.
\end{cases}
\]
In the $(\tilde{a},\tilde{b},\tilde{c})$ coordinates, the generators $f_k$, $F_g$, and $Q$ turn into $\tilde f_k$, $\tilde F_g$, and $\tilde Q$, defined below.
\begin{itemize}
\item[(i)] $\tilde f_k(\tilde a,\tilde b,\tilde c):=\frac{1}{2^{2k}}\left[
(\tilde a+\tilde b)^{2k}-(\tilde a-\tilde b+1)^{2k}-(\tilde a-\tilde b-1)^{2k}\right]$.
\item[(ii)] $\tilde F_g(\tilde a,\tilde b,\tilde c):=\frac{1}{32}\left[
(\tilde a+\tilde b)(4\tilde a^2-1)(4\tilde b^2-1)
\right]
g\left(\frac{\tilde a+\tilde b}{2},(\frac{\tilde a-\tilde b+1}{2})^2,(\frac{\tilde a-\tilde b-1}{2})^2\right)$.
\item[(iii)] $\tilde Q(\tilde a,\tilde b,\tilde c):=\tilde c$.
\end{itemize}
Note that the $\tilde{f}_k$ are independent of $c$.
Also, $\tau_J^*\left(h_3(x_1,x_2,y_1,\frac{3}{2})\right)$  can be expressed in $(\tilde a,\tilde b,\tilde c)$-coordinates as
\begin{align}
\label{eq:11over32}
\frac{81}{64}{\tilde c}^3-\frac{135}{128}(\tilde a+\tilde b){\tilde c}^2
&+
\left(\frac{171}{256}(\tilde a^2+\tilde b^2)+\frac{27}{128}\tilde a\tilde b-\frac{51}{64}
\right){\tilde c}\\
&+
\left(
-\frac{53}{512}(\tilde a^3+\tilde b^3)
-\frac{63}{512}(\tilde a^2\tilde b+\tilde a\tilde b^2)
+\frac{35}{128}(\tilde a+\tilde b)
\right)
.
\notag
\end{align}
Now assume that  \eqref{eq:h4notin} is not true. It follows that the polynomial \eqref{eq:11over32}
belongs to the $\C$-algebra generated by  the $\tilde {f}_k$'s, the $\tilde{F}_g$'s, and $\tilde{Q}$. 
Since the variables $\tilde a,\tilde b,\tilde c$ are algebraically independent, the coefficient of $\tilde c^2$ 
in \eqref{eq:11over32}
should belong to the $\C$-algebra generated by 
the $\tilde{f}_k$ and the $\tilde{F}_g$'s.
It follows that there exist polynomials 
 $\phi({u}_1,\ldots,{u}_N)\in\C[u_1,\ldots,u_N]$ for some $N\in\N$ 
and $g_\circ(\tilde a,\tilde b)\in\C[\tilde a,\tilde b]$ 
 such that  
\begin{equation}
\label{eq:a+b=olineF}
\textstyle (\tilde a+\tilde b)=
(\tilde a+\tilde b)(4\tilde a^2-1)(4\tilde b^2-1)
g_\circ(\tilde a,\tilde b)+\phi\left(\tilde{f}_1(\tilde a,\tilde b),\ldots,\tilde{f}_N(\tilde a,\tilde b)\right).
\end{equation}
Setting $\tilde a=\frac{1}{2}$ and $\tilde b=0$ in \eqref{eq:a+b=olineF}, we obtain
\begin{equation}
\label{eq:12=11a}
\textstyle \frac{1}{2}=\phi\left(
-(\frac{3}{2})^2,\ldots,-(\frac{3}{2})^{2N}\right),
\end{equation}
and setting $\tilde a=-\frac{1}{2}$ and $\tilde b=0$
in \eqref{eq:a+b=olineF},
 we obtain 
\begin{equation}
\label{eq:12=11b}
\textstyle 
-\frac{1}{2}=\phi\left(
-(\frac{3}{2})^2,\ldots,-(\frac{3}{2})^{2N}\right).
\end{equation}
Clearly \eqref{eq:12=11a} and \eqref{eq:12=11b}
cannot be true simulteneously. This contradiction implies that 
\eqref{eq:h4notin} must be true. 
\end{proof}

\section{Proof of Theorem 
\ref{thm-main-opVla}
when $J\not\cong\mathit{F}$.}
\label{sec-thm-main-opVla}

The proof of Theorem \ref{thm-main-opVla} differs in the two cases $J\not\cong\mathit{F}$ and 
$J\cong\mathit{F}$. Indeed in the case $J\not\cong \mathit{F}$
Proposition \ref{prp-JresHCZiLJ} allows us to give a uniform proof. Thus, we first prove 
Theorem \ref{thm-main-opVla} in the case $J\not\cong\mathit{F}$, and then we give a separate argument for the case
 $J\cong \mathit{F}$. We begin with the following lemma, whose proof is similar to that of
\cite[Lem. 5.4]{SahSalAdv}.
Recall that $c_\mu(\lambda)$ for $\lambda,\mu\in \Omega$ denotes the scalar by which $D_\mu$ acts on $V_\lambda$.

\begin{lem}
\label{lem:CapDlamVmu}
Let $\mu\in\Omega_d$ where $d\geq 0$. Then 
$c_\mu(\mu)=d!$, and 
$c_\mu(\lambda)=0$ for all $\lambda\in\bigcup_{k=0}^d
\Omega_k\bls\{\mu\}$.

\end{lem}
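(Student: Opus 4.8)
The plan is to exploit the defining characterization of the Capelli operators $D_\mu$ as the operators corresponding to $\mathrm{id}_{V_\mu}$ under the isomorphism \eqref{eq:capisodfnn}, together with the fact that this isomorphism respects the degree (order) filtration. First I would recall that $D_\mu\in\sPD(V)^{\g g}$ lives in $\bigoplus_{\lambda,\nu}\Hom_{\g g}(V_\nu,V_\lambda)$, and that under the identification $\sPD(V)^{\g g}\cong(\sP(V)\otimes\sS(V))^{\g g}$ the component of $D_\mu$ in $\Hom_{\g g}(V_\nu,V_\lambda)$ arises from the pairing of $V_\lambda\sseq\sP^{|\lambda|}(V)$ with $V_\nu^*\sseq\sS^{|\nu|}(V)$; hence as a differential operator $D_\mu$, normalized so that its $\Hom(V_\mu,V_\mu)$-component is $\mathrm{id}_{V_\mu}$, is homogeneous of degree $0$ in the sense that it maps $\sP^k(V)$ to $\sP^k(V)$, and its \emph{order} as a differential operator equals $|\mu|=d$ (the summand $\Hom_{\g g}(V_\nu,V_\lambda)$ contributes an operator of order $|\nu|$, increasing polynomial degree by $|\lambda|-|\nu|$; the normalization forces the term with $\lambda=\nu=\mu$, of order $d$, to be present, and no higher-order terms appear because $\sP^d(V)$ contains no $V_\lambda$ with $|\lambda|<d$... more carefully, the order of $D_\mu$ is $\max\{|\nu| : \Hom_{\g g}(V_\nu,V_\lambda)\text{-component nonzero}\}$, and one shows this max is exactly $d$).

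The vanishing statement $c_\mu(\lambda)=0$ for $\lambda\in\bigcup_{k=0}^d\Omega_k\setminus\{\mu\}$ then follows by a degree/order count: $D_\mu$, being a $\g g$-invariant differential operator of order $d$ that preserves polynomial degree, acts on $\sP^k(V)$ for $k<d$ as zero (an order-$d$ operator preserving degree must annihilate polynomials of degree $<d$, since lowering differentiation of total order $d$ kills anything of degree $<d$ and raising parts are excluded by degree-preservation). For $k=d$ but $\lambda\neq\mu$, I would argue that the degree-$d$, order-$d$ part of $D_\mu$ corresponds precisely to the projection $\sP^d(V)\to V_\mu$ followed by the inclusion, i.e. the leading symbol picks out only the $V_\mu$-isotypic component; since $\sP(V)$ is multiplicity-free, $D_\mu$ acts by $0$ on every other irreducible summand $V_\lambda$ of $\sP^d(V)$. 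Concretely, one writes $D_\mu=\sum_{\lambda,\nu\in\Omega}\Phi_{\lambda\nu}$ with $\Phi_{\lambda\nu}\in\Hom_{\g g}(V_\nu,V_\lambda)$, normalized by $\Phi_{\mu\mu}=\mathrm{id}_{V_\mu}$ and $\Phi_{\lambda\mu}=0$ for $\lambda\neq\mu$ (this is exactly what "corresponds to $\mathrm{id}_{V_\mu}$" means under \eqref{eq:capisodfnn}: the $V_\mu^*$-slot is matched with $V_\mu$, and there is no $V_\lambda$ for $\lambda\neq\mu$ in the relevant graded piece); then for $\lambda\in\Omega_k$ with $k\le d$, $D_\mu|_{V_\lambda}=\sum_{\nu}\Phi_{\nu\lambda}$ which is forced to be zero unless some $\Phi_{\nu\lambda}\neq 0$, and tracing through the construction one sees the only surviving possibility on the lowest pieces is $\lambda=\mu$.

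For the normalization $c_\mu(\mu)=d!$, I would compute the leading (highest-order) symbol of $D_\mu$ explicitly. The degree-preserving, order-$d$ part of $D_\mu$ is the composition $\sP^d(V)\xrightarrow{\pi_\mu}V_\mu\hookrightarrow\sP^d(V)$ up to the identification of $V_\mu^*\subset\sS^d(V)$ with the differential operators of order $d$ with constant coefficients that are dual to $V_\mu\subset\sP^d(V)$. Pairing a degree-$d$ polynomial with its dual constant-coefficient order-$d$ operator introduces a factor of $d!$ (this is the standard Fischer/apolarity pairing: $\langle x^\alpha, \partial^\beta\rangle = \alpha!\,\delta_{\alpha\beta}$, and on a homogeneous degree-$d$ space the "trace of identity via the pairing" picks up $d!$). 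So acting by $D_\mu$ on the highest weight vector of $V_\mu$ returns $d!$ times that vector, giving $c_\mu(\mu)=d!$. I expect the main obstacle to be making the identification of $D_\mu$ with "$\pi_\mu$ rescaled by $d!$" fully rigorous — that is, pinning down the precise normalization constant in the isomorphism $(\sP(V)\otimes\sS(V))^{\g g}\cong\sPD(V)^{\g g}$ and checking it produces exactly $d!$ rather than some other combinatorial factor; this is where I would follow the computation in \cite[Lem. 5.4]{SahSalAdv} closely, as the statement says.
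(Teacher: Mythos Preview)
Your approach is essentially the paper's, and the three ingredients (order-$d$ kills $\sP^{<d}(V)$; image lands in $V_\mu$ so Schur gives vanishing on $V_\lambda$ for $\lambda\in\Omega_d\setminus\{\mu\}$; the Fischer pairing $\partial_{v^*}v=d!\langle v,v^*\rangle$ gives the normalization) are all correct. However, your exposition is muddled because you conflate two different decompositions. Under \eqref{eq:capisodfnn}, ``$D_\mu$ corresponds to $\mathrm{id}_{V_\mu}$'' means that \emph{all} components $\Phi_{\lambda\nu}$ with $(\lambda,\nu)\neq(\mu,\mu)$ are zero by definition --- there is no ``normalization'' beyond that, and the discussion of which terms ``appear'' or what the ``max'' order is becomes vacuous. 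In particular, the formula $D_\mu|_{V_\lambda}=\sum_\nu\Phi_{\nu\lambda}$ is not correct: the $\Phi_{\nu\lambda}$ are the components of $D_\mu$ as an element of $\bigoplus(V_\lambda\otimes V_\nu^*)^{\g g}$, not the components of its \emph{action} on $\sP(V)$; these coincide only after the order/degree argument you are trying to prove, so invoking it here is circular. The paper avoids this by arguing directly from $D_\mu\in\sfm(V_\mu\otimes V_\mu^*)$: on $\sP^d(V)$ the constant-coefficient part $\partial_{v_i^*}$ lands in $\C$, so $D_\mu f=\sum_i(\partial_{v_i^*}f)\,v_i\in V_\mu$, and the restriction $V_\lambda\to V_\mu$ is a $\g g$-map, hence zero for $\lambda\neq\mu$.
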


\begin{proof}
If $\lambda\in\Omega_k$ where $k<d$, then $V_\lambda\sseq\sP^k(V)$, and thus $c_\mu(\lambda)=0$ because $D_\mu\sP^k(V)=\{0\}$.  
Next assume $\lambda\in\Omega_d$. 
The map $\sfm:\sPD(V)\otimes \sP(V)\to\sP(V)$ defined by $D\otimes f\mapsto Df$
and the canonical isomorphism $\sfm:\sP(V)\otimes\sS(V)\to\sPD(V)$
are $\g g$-equivariant. Since 
$D_\mu\in\sfm(V_\mu\otimes V_\mu^*)$, the restriction of 
$D_\mu$ to $V_\lambda$ is a $\g g$-equivariant map $V_\lambda\to V_\mu$, so that $c_\mu(\lambda)=0$ unless $\lambda=\mu$.

Finally, assume $\lambda=\mu$. Let 
$\lag\cdot,\cdot\rag$ be the duality pairing between $\sP^d(V)\cong\sS^d(V)^*$ and $\sS^d(V)$, and let $\beta:\sP^d(V)\times \sS^d(V)\to\C$ be the bilinear map 
$\beta(a,b):=\partial_b(a)$, where \[
\partial:\sS(V)\to \sD(V)
\] is the canonical isomorhpism between the symmetric algebra 
$\sS(V)$
and the algebra of constant coefficient differential operators
$\sD(V)$. A 
direct calculation shows that 
$\beta(\cdot,\cdot)=d!\lag \cdot,\cdot\rag$. Now choose a basis $v_1,\ldots,v_t$ for $V_\mu$ and
a dual basis $v_1^*,\ldots,v_t^*$ for $V_\mu^*$. Then 
\[
\textstyle
D_\mu v_k=\sfm(\sum_{i=1}^tv_i\otimes v_t^*)v_k=\sum_{i=1}^t
\partial_{i=1}^tv_i\partial_{v_i^*}v_k
=d!v_k.\qedhere
\] 
\end{proof}

Recall the maps 
\[
\sfj:\bfU(\g g)\to\sPD(V),\ \
\HC:\bfU(\g g)\to\sP(\g h^*), \text{ and }
\mathsf{res}:\sP(\g h^*)\to\sP(\aomega),
\] defined in 
\eqref{eq:dfsfj}, 
\eqref{eq:HHCC1}, and \eqref{eqdfreS}.
For $D\in\sPD(V)^\g g$
and $\lambda\in\Omega$
let $\oline{\HC}(D)\left(\hww{\lambda}\right)$ denote the scalar by which $D$ acts on 
the irreducible $\g g$-module $V_\lambda\sseq\sP(V)$
whose $\g b$-highest weight is $\hww{\lambda}$.
Then we have
\begin{equation}
\label{eqcommdiagHCbar}
\oline{\HC}(\sfj(z))=\mathsf{res}(\HC(z))
\text{ for }z\in\bfZ(\g g).
\end{equation}

Recall that 
by $\left(\sP^{(i)}_J\right)_{i\geq 0}$
we denote the standard degree filtration of 
the polynomial algebra $\sP_J$ 
defined in
\eqref{eq:LamJPJ}.
Let $\tau_J^{}:\aomega\to \C^{n_J}$ and 
$
\tau_J^*:\sP_J\to \sP\left(\aomega\right)
$ 
be defined as in \eqref{eq:introtauJ} and \eqref{eq:introtauJ2}, respectively.
Since  $\tau_J^{}$ is a bijection,   $\tau_J^*$ is an isomorphism of $\C$-algebras.

If $J\not\cong\mathit{F}$, then by Proposition
\ref{prp-JresHCZiLJ},
for every $\mu\in\Omega_d$ there exists an element $z_\mu\in\bfZ^{(d)}(\g g)$ such that 
\[
\mathsf{res}(\HC(z_\mu))=\tau_J^*\left(P_{J,\mu}\right),
\] 
where $P_{J,\mu}$ is as in Definition 
\ref{dfn:Omega}.
Theorem \ref{thm-main-opVla} follows from Proposition \ref{prp::-surj-center}(iii).

\begin{prp}
\label{prp::-surj-center} 
Assume that  $J\not\cong\mathit{F}$.
Then the following assertions hold.
\begin{itemize}
\item[\rm (i)]
$\sfj\left(z_\mu\right)=D_\mu$
for all $\mu\in\Omega$. 
\item[\rm  (ii)] 
$\sfj(\bfZ(\g g))=\sPD(V)^{\g g}$.

\item[\rm (iii)] $\oline{\HC}_{D_\mu}(\hww{\lambda})=P_{J,\mu}(\tau_J(\hww{\lambda}))$ for all $\lambda,\mu\in\Omega$.
\end{itemize}

\end{prp}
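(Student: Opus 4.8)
### Proof proposal for Proposition \ref{prp::-surj-center}

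The plan is to prove (i)--(iii) in the order listed, with (i) doing the main work and (ii), (iii) following quickly. First I would observe that, since $\sfj$ is $\g g$-equivariant, $\sfj(z_\mu)\in\sPD(V)^{\g g}$, so by the basis property of the Capelli operators (Remark \ref{rem:CapelliEven}) we may write $\sfj(z_\mu)=\sum_{\lambda\in\Omega}a_\lambda D_\lambda$ for scalars $a_\lambda$. The key point is that $z_\mu\in\bfZ^{(d)}(\g g)$ with $d=|\mu|$, and $\sfj$ respects filtrations, so $\sfj(z_\mu)\in\sPD^{(d)}(V)^{\g g}$; since $D_\lambda$ raises polynomial degree by $|\lambda|$ minus the degree of the symmetric-algebra part, a weight/degree bookkeeping argument (exactly as in the analogous step of \cite{SahSalAdv}) shows $a_\lambda=0$ whenever $|\lambda|>d$. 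Thus $\sfj(z_\mu)=\sum_{|\lambda|\le d}a_\lambda D_\lambda$.

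The crucial identity is \eqref{eqcommdiagHCbar}: for any $\lambda\in\Omega$ the scalar by which $\sfj(z_\mu)$ acts on $V_\lambda$ equals $\mathsf{res}(\HC(z_\mu))(\hww{\lambda})=\tau_J^*(P_{J,\mu})(\hww{\lambda})=P_{J,\mu}(\tau_J(\hww{\lambda}))$. On the other hand this same scalar equals $\sum_{|\lambda'|\le d}a_{\lambda'}c_{\lambda'}(\lambda)$. Now I would evaluate at $\lambda\in\Omega_k$ for $k\le d$ and induct on $k$. For $|\lambda|<|\mu|=d$: by the vanishing property of $P_{J,\mu}$ (Theorem \ref{thm:SVSP*}(ii) or Theorem \ref{thm-Iv}(ii), transported through the normalization in Definition \ref{dfn:Omega} and the identification of $\tau_J(\hww{\lambda})$ with the Frobenius-type coordinates of $\lambda$ recorded in Table \ref{tbl-3}) the left side is $P_{J,\mu}(\tau_J(\hww\lambda))=0$; by Lemma \ref{lem:CapDlamVmu} the right side collapses and induction forces all $a_\lambda$ with $|\lambda|<d$ to vanish. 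Then evaluating at $\lambda=\mu$ gives $P_{J,\mu}(\tau_J(\hww\mu))=a_\mu c_\mu(\mu)=a_\mu\, d!$, while the normalization in Definition \ref{dfn:Omega} is precisely arranged so that $P_{J,\mu}(\tau_J(\hww\mu))=\frac{|\mu|!}{H_{\theta_J}(\mu)}H_{\theta_J}(\mu)=d!$ (using Theorem \ref{thm:SVSP*}(iii), resp. Theorem \ref{thm-Iv}(iii)); hence $a_\mu=1$. Evaluating at any $\lambda\in\Omega_d\setminus\{\mu\}$ then shows the remaining coefficients vanish by the vanishing property again together with Lemma \ref{lem:CapDlamVmu}. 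This proves $\sfj(z_\mu)=D_\mu$, which is (i).

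Part (ii) is then immediate: the $z_\mu$ range over all of $\Omega$, so $\sfj(\bfZ(\g g))\supseteq\sfj(\{z_\mu\})$ spans all $D_\mu$, hence equals $\sPD(V)^{\g g}$; the reverse inclusion $\sfj(\bfZ(\g g))\subseteq\sPD(V)^{\g g}$ is the equivariance remark already noted before the statement. For part (iii), combine (i) with \eqref{eqcommdiagHCbar}: $\oline{\HC}_{D_\mu}(\hww\lambda)=\oline{\HC}(\sfj(z_\mu))(\hww\lambda)=\mathsf{res}(\HC(z_\mu))(\hww\lambda)=\tau_J^*(P_{J,\mu})(\hww\lambda)=P_{J,\mu}(\tau_J(\hww\lambda))$. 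The main obstacle is the degree/filtration bound in the first paragraph of the argument — one must check carefully that $\sfj(z_\mu)$ has no components $D_\lambda$ with $|\lambda|>d$, which requires matching the order filtration on $\sPD(V)$ with the grading on $\sP(V)$ coming from \eqref{eq:capisodfnn}; everything else is an induction driven by the interpolation (vanishing/normalization) characterization of the $P_{J,\mu}$ together with Lemma \ref{lem:CapDlamVmu}.
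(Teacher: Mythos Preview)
Your proposal is correct and follows essentially the same approach as the paper. The only difference is cosmetic: rather than expanding $\sfj(z_\mu)$ in the Capelli basis and running an induction on $|\lambda|$ to kill the coefficients, the paper simply observes that $D_\mu':=\sfj(z_\mu)$ and $D_\mu$ act by the same scalar on every $V_\lambda$ with $|\lambda|\le d$ (one side by \eqref{eqcommdiagHCbar} and the definition of $z_\mu$, the other by Lemma~\ref{lem:CapDlamVmu}, matched via the vanishing and normalization of $P_{J,\mu}$), and then uses that an operator of order $\le d$ is determined by its restriction to $\bigoplus_{k\le d}\sP^k(V)$; your ``main obstacle'' about the filtration is handled there in one line, since $D_\lambda$ visibly has order $|\lambda|$.
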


\begin{proof}
(i) By a direct computation, from Theorem \ref{thm:SVSP*} and Theorem \ref{thm-Iv} it follows that 
$P_{J,\mu}\left(\tau_J(\hww{\mu})\right)=d!$ and 
\[
P_{J,\mu}\left(\tau_J(\hww{\lambda})\right)=0
\text{ for all }\lambda\in\bigcup_{k=0}^d
\Omega_k\bls\{\mu\}.
\]  
Set $D_\mu':=\sfj(z_\mu)$. Then 
$D_\mu'\in\sPD^{(d)}(V)^\g g$ because the map $\sfj$ preserves the filtrations. 
From \eqref{eqcommdiagHCbar} it follows that
for $\lambda\in\bigcup_{k=0}^d\Omega_k$, the operator
$D_\mu'$ acts on $V_\lambda$ by the scalar
$P_J\left(\tau_J(\hww{\lambda})\right)$.
Since elements of $\sPD^{(d)}(V)$ are uniquely determined by their restrictions to $\sP^d(V)$, Lemma
\ref{lem:CapDlamVmu} implies that $D_\mu=D_\mu'$.

(ii) Since the family $\left(D_\mu\right)_{\mu\in\Omega}$ is a basis of $\sPD(V)^{\g g}$,  the above argument
implies that  $\sfj(\bfZ(\g g))=\sPD(V)^{\g g}$.

(iii) This follows immediately from the fact that $D_\mu=D_\mu'$.
\end{proof}

We remark that Proposition \ref{prp::-surj-center}(ii) does not hold when $J\cong\mathit{F}$.  This is proved in Proposition \ref{prpJ===Fsjscfd}. 
\begin{prp}
\label{prpJ===Fsjscfd}
Assume that $J\cong F$. Then 
$\sfj\left(\bfZ(\g g)\right)\subsetneq\sPD(V)^{\g g}$
\end{prp}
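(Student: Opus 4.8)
\textbf{Proof plan for Proposition \ref{prpJ===Fsjscfd}.}
The plan is to combine the two facts already available for $J\cong\mathit{F}$: the strict inclusion of Harish-Chandra images proved in Proposition \ref{prp:typeFresHCsseqtauJ*}, namely $\mathsf{res}\left(\HC\left(\bfZ(\g g)\right)\right)\subsetneq\tau_J^*\left(\Lambda_J\right)$, together with the description of the Capelli operators and the commutative square \eqref{eqcommdiagHCbar}. The key point is that the algebra $\sPD(V)^{\g g}$ is large enough to see \emph{all} of $\tau_J^*(\Lambda_J)$ through the map $\oline{\HC}$, whereas the image $\sfj(\bfZ(\g g))$ can only see the proper subalgebra $\mathsf{res}(\HC(\bfZ(\g g)))$; hence the inclusion $\sfj(\bfZ(\g g))\subseteq\sPD(V)^{\g g}$ must be proper.

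First I would make precise the assertion that $\oline{\HC}$ is injective on $\sPD(V)^{\g g}$. Since $\sP(V)$ is completely reducible and multiplicity-free and $\left(D_\lambda\right)_{\lambda\in\Omega}$ is a basis of $\sPD(V)^{\g g}$ (Remark \ref{rem:CapelliEven}), every $D\in\sPD(V)^{\g g}$ is determined by the collection of scalars $\left(\oline{\HC}(D)(\hww{\lambda})\right)_{\lambda\in\Omega}$ by which it acts on the summands $V_\lambda$; moreover, by Definition \ref{dfn:Omega} and the characterization of $\SPP_\mu$ in Theorem \ref{thm:SVSP*} (item (ii) giving vanishing at the points $\tau_J(\hww{\lambda})$ for $|\lambda|\le|\mu|$, $\lambda\ne\mu$, and (iii) giving the nonzero value at $\tau_J(\hww{\mu})$), the polynomials $P_{J,\mu}$ form a basis of $\Lambda_J$ that is ``triangular'' with respect to the evaluation points $\tau_J(\hww{\mu})$. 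Consequently the assignment $D_\mu\mapsto P_{J,\mu}$ extends to a linear isomorphism $\sPD(V)^{\g g}\xrightarrow{\sim}\Lambda_J$ under which $\oline{\HC}(D)$ corresponds to $\tau_J^*$ applied to the matching element of $\Lambda_J$; in particular the map $D\mapsto\big(\lambda\mapsto\oline{\HC}(D)(\hww{\lambda})\big)$ identifies $\sPD(V)^{\g g}$ with the space of functions on $\{\hww{\lambda}:\lambda\in\Omega\}$ coming from $\tau_J^*(\Lambda_J)$. (This is exactly the multiplicity-free version of the standard fact that Capelli operators are diagonalized by the $V_\lambda$ with a triangular eigenvalue table.)

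Next, by the commutative square \eqref{eqcommdiagHCbar}, for $z\in\bfZ(\g g)$ the operator $\sfj(z)$ acts on $V_\lambda$ by $\mathsf{res}(\HC(z))(\hww{\lambda})$. Therefore the image of $\sfj(\bfZ(\g g))$ under the identification of the previous paragraph is precisely $\mathsf{res}(\HC(\bfZ(\g g)))$, viewed as a space of functions on $\{\hww{\lambda}:\lambda\in\Omega\}$. Since $\Omega$ is Zariski dense in $\aomega$ (this is implicit in Definition \ref{dfnaOmeg*}, where $\aomega$ is \emph{defined} as the Zariski closure), a polynomial in $\sP(\aomega)$ is determined by its restriction to $\{\hww{\lambda}:\lambda\in\Omega\}$, so this identification of function spaces is faithful. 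Thus $\sfj(\bfZ(\g g))=\sPD(V)^{\g g}$ would force $\mathsf{res}(\HC(\bfZ(\g g)))=\tau_J^*(\Lambda_J)$, contradicting Proposition \ref{prp:typeFresHCsseqtauJ*}. Hence $\sfj(\bfZ(\g g))\subsetneq\sPD(V)^{\g g}$.

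The main obstacle I anticipate is the first step: rigorously establishing that $\oline{\HC}$ restricted to $\sPD(V)^{\g g}$ lands exactly in $\tau_J^*(\Lambda_J)$ and is injective there, i.e.\ that the ``eigenvalue function'' of any $\g g$-invariant differential operator is the $\tau_J^*$-pullback of an element of $\Lambda_J$ and that distinct invariant operators give distinct such functions. The injectivity is routine from multiplicity-freeness and Zariski density of $\Omega$ in $\aomega$; the containment in $\tau_J^*(\Lambda_J)$ is the substantive input and should be extracted from the triangularity of the basis $(P_{J,\mu})$ of $\Lambda_J$ against the basis $(D_\mu)$ of $\sPD(V)^{\g g}$, which in turn rests on Theorem \ref{thm:SVSP*}(ii)--(iii) (equivalently on Lemma \ref{lem:CapDlamVmu} matching the normalizations). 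Once this dictionary between $\sPD(V)^{\g g}$ and $\Lambda_J$ is in place, the proposition is immediate from Proposition \ref{prp:typeFresHCsseqtauJ*}.
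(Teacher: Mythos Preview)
Your overall architecture is right, and you have correctly isolated the crux: you need to know that $\oline{\HC}\big(\sPD(V)^{\g g}\big)\subseteq\tau_J^*(\Lambda_J)$. However, your proposed justification for this containment via ``triangularity'' is circular. Lemma~\ref{lem:CapDlamVmu} tells you only the values $\oline{\HC}(D_\mu)(\hww{\lambda})$ for $|\lambda|\le|\mu|$, and the uniqueness clause of Theorem~\ref{thm:SVSP*} characterizes a polynomial \emph{only among elements of $\Lambda_J$}; it says nothing about an arbitrary polynomial on $\aomega$ with those values and that degree bound. Since $\dim\sP^{(d)}(\aomega)$ strictly exceeds $|\Omega_{\le d}|$ here ($\aomega$ is three-dimensional), the vanishing data do not pin down $\oline{\HC}(D_\mu)$ inside $\sP^{(d)}(\aomega)$. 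So you cannot deduce $\oline{\HC}(D_\mu)\in\tau_J^*(\Lambda_J)$ from Lemma~\ref{lem:CapDlamVmu} and Theorem~\ref{thm:SVSP*} alone; that containment is precisely the content of the later analysis in Section~\ref{Sec:10dimcase} (Lemma~\ref{lemDG/KnotdpzZ}, Propositions~\ref{prp:ab-b-3} and~\ref{prp:a+1b+12}).

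The paper avoids this forward reference by a simple trick you are missing: put the containment \emph{inside} the contradiction. Assume $\sfj(\bfZ(\g g))=\sPD(V)^{\g g}$. Then by \eqref{eqcommdiagHCbar} and Proposition~\ref{prp:typeFresHCsseqtauJ*} one gets, for free,
\[
\oline{\HC}\big(\sPD(V)^{\g g}\big)=\oline{\HC}\big(\sfj(\bfZ(\g g))\big)=\mathsf{res}\big(\HC(\bfZ(\g g))\big)\subseteq\tau_J^*(\Lambda_J).
\]
Now combine this with the degree bound $\oline{\HC}(D_\lambda)\in\sP^{(d)}(\aomega)$ from Proposition~\ref{prp-g=k+bdegofDlam} to get $\oline{\HC}(D_\lambda)\in\tau_J^*\big(\Lambda_J^{(d)}\big)$ for $\lambda\in\Omega_{\le d}$. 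Injectivity of $\oline{\HC}$ (which you argued correctly from multiplicity-freeness and Zariski density) makes these linearly independent, and the count $|\Omega_{\le d}|=\dim\Lambda_J^{(d)}$ forces them to span. Hence $\tau_J^*(\Lambda_J)\subseteq\mathsf{res}\big(\HC(\bfZ(\g g))\big)$, contradicting Proposition~\ref{prp:typeFresHCsseqtauJ*}. In short: do not try to prove $\oline{\HC}(\sPD(V)^{\g g})=\tau_J^*(\Lambda_J)$ unconditionally; let the hypothesis you want to refute hand you the hard inclusion.
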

\begin{proof}
Suppose that 
$\sfj\left(\bfZ(\g g)\right)=\sPD(V)^{\g g}$.
Then from 
\eqref{eqcommdiagHCbar}
and the explicit description of the 
image of the Harish-Chandra homomorphism of $\g{osp}(2|4)$
it follows that 
\begin{equation}
\label{HCbarsseqtauJ*F}
\oline{\HC}\left(\sPD(V)^\g g\right)
=
\oline{\HC}
\left(
\sfj\left(\bfZ(\g g)\right)
\right)
=
\mathsf{res}\left(\HC(\bfZ(\g g))\right)
\sseq \tau_J^*(\Lambda_J).
\end{equation}
Fix $d\geq 0$ and set $\Omega_{\leq d}:=\bigcup_{k=0}^d\Omega_k$.
By Proposition \ref{prp-g=k+bdegofDlam} we have  $\oline{\HC}(D_\lambda)\in\sP^{(d)}(\aomega)$ for $\lambda\in \Omega_{\leq d}$, and therefore from 
\eqref{HCbarsseqtauJ*F} it follows that 
$\oline{\HC}(D_\lambda)\in\tau_J^*\left(\Lambda_J^{(d)}\right)$
for $\lambda\in \Omega_{\leq d}$.
Recall that
elements of $\sPD(V)$ are uniquely determined by their restrictions to $\sP(V)$. 
Since the  
Capelli operators $\left(D_\lambda\right)_{\lambda\in\Omega_{\leq d}}$
are linearly independent, 
 it follows that
 the polynomials $\left(\oline{\HC}(D_\lambda)\right)_{\lambda\in\Omega_{\leq d}}$ are also linearly independent. 
Since $|\Omega_{\leq d}|=\sum_{k=0}^d|\EuScript{H}_k(1,2)|=\dim\Lambda_J^{(d)}$, it follows that the family  $\left(\oline{\HC}(D_\lambda)\right)_{\lambda\in\Omega_{\leq d}}$ also spans $\tau_J^*\left(\Lambda_J^{(d)}\right)$. Since $d$ can be any non-negative integer, from 
\eqref{eqcommdiagHCbar} it follows that \[
\tau_J^*(\Lambda_J)=\bigcup_{d=0}^\infty
\tau_J^*(\Lambda_J^{(d)})\sseq 
\mathsf{res}\left(\oline{\HC}(\sPD(V)^\g g)\right)=
\mathsf{res}\left(\HC(\bfZ(\g g))\right),
\] which contradicts  
Proposition \ref{prp:typeFresHCsseqtauJ*}.  
\end{proof}

\section{Proof of Theorem 
\ref{thm-main-opVla} when $J\cong \mathit{F}$}

\label{Sec:10dimcase}

Recall that in this case $\g g\cong\g{gosp}(2|4)$, and 
$\aomega$ is the subspace of $\g h^*$ that is given in \eqref{eq:muxyzF}.
Let $\sigma:\C^3\to \aomega$ be defined by $\sigma(a,b,c):= \mu_{a,b,c}$, where $\mu_{a,b,c}$ is defined in 
\eqref{eq:muabc}. We define the map $\sigma^*:\sP(\aomega)\to\sP(\C^3)\cong \C[a,b,c]$ by
$\sigma^*(f):=f\circ \sigma_F$.
\begin{lem}
\label{lemDG/KnotdpzZ}
Assume that $J\cong \mathit{F}$. Then $\sigma^*(\tau_J^*(\Lambda_J))$ is the subalgebra of $\C[a,b,c]$ consisting of polynomials
$f(a,b,c)$ which satisfy 
the following two properties.
\begin{itemize}
\item[\rm (i)] $f(a,b,c)=f(a,-b-3,c)$.

\item[\rm (ii)]
$f(a+1,b+\frac{1}{2},c)=f(a-1,b-\frac{1}{2},c)$ on the affine hyperplane $a-b+\frac{1}{2}=0$.

\end{itemize}
 
\end{lem}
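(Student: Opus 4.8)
The statement identifies $\sigma^*(\tau_J^*(\Lambda_J))$ with an explicit subalgebra of $\C[a,b,c]$, so the natural strategy is a double inclusion. Recall that for $J\cong\mathit{F}$ we have $\Lambda_J=\Lambda^\natural_{2,1,3/2}$, the algebra of polynomials in $x_1,x_2,y_1$ that are symmetric in $x_1,x_2$ and satisfy the vanishing relation $f(x+\tfrac12\sfe_i,y-\tfrac12\sfe_j)=f(x-\tfrac12\sfe_i,y+\tfrac12\sfe_j)$ on each hyperplane $x_i+\tfrac32 y_1=0$. The map $\tau_J$ in Case V of Table \ref{tbl-3} sends $\mu_{a,b,c}$ to the triple with coordinates given by $(x_1,x_2,y_1)=\bigl(\tfrac{-a+2b+3c+1}{4},\tfrac{-a-2b+3c-5}{4},\tfrac{a-c+2}{2}\bigr)$, so $\sigma^*\circ\tau_J^*$ is precisely precomposition with the affine map $(a,b,c)\mapsto(x_1,x_2,y_1)$ given by these three formulas. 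The plan is to translate the defining conditions on $\Lambda^\natural_{2,1,3/2}$ through this affine change of variables and check they become exactly conditions (i) and (ii).

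First I would record the inverse change of variables. From the three formulas one computes $x_1-x_2 = b+\tfrac32$, $x_1+x_2 = \tfrac{-a+3c-2}{2}$, and $y_1=\tfrac{a-c+2}{2}$; in particular $a,b,c$ are recovered linearly from $x_1,x_2,y_1$, so $\sigma^*\circ\tau_J^*$ is an isomorphism onto its image and it suffices to match the two sets of defining relations. The $x$-symmetry of $f\in\Lambda^\natural_{2,1,3/2}$ is the condition that $f$ is invariant under swapping $x_1\leftrightarrow x_2$; since $x_1-x_2=b+\tfrac32$ and $x_1+x_2,\,y_1$ are unchanged by this swap, the swap corresponds on the $(a,b,c)$-side to $b\mapsto -b-3$ with $a,c$ fixed, which is exactly condition (i). For the vanishing relation: there are two hyperplanes, $x_1+\tfrac32 y_1=0$ and $x_2+\tfrac32 y_1=0$, and on the first one the relation reads $f(x_1+\tfrac12,x_2,y_1-\tfrac12)=f(x_1-\tfrac12,x_2,y_1+\tfrac12)$. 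I would compute that the shift $(x_1,x_2,y_1)\mapsto(x_1+\tfrac12,x_2,y_1-\tfrac12)$ corresponds, via the inverse formulas, to some shift $(a,b,c)\mapsto(a+\alpha,b+\beta,c+\gamma)$, and that the hyperplane $x_1+\tfrac32 y_1=0$ pulls back to an affine hyperplane in $(a,b,c)$; a short computation (using $x_1+\tfrac32 y_1$ expressed in terms of $a,b,c$) should show this is the hyperplane $a-b+\tfrac12=0$ and that the induced shift is $(a+1,b+\tfrac12,c)\leftrightarrow(a-1,b-\tfrac12,c)$, i.e.\ condition (ii). The hyperplane $x_2+\tfrac32 y_1=0$ should, after also invoking the already-established $b\mapsto-b-3$ symmetry (condition (i)), give a relation equivalent to (ii), so it produces nothing new.

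Having matched the relations, the double inclusion is then immediate: a polynomial $g(a,b,c)$ lies in $\sigma^*(\tau_J^*(\Lambda_J))$ if and only if $g=f\circ(\text{affine map})^{-1}$ pulls back to a well-defined polynomial in $x_1,x_2,y_1$ — which it automatically does since the map is invertible linear — lying in $\Lambda^\natural_{2,1,3/2}$, and by the computations above this happens exactly when $g$ satisfies (i) and (ii). The main obstacle is purely computational: one must carefully carry out the affine bookkeeping so that the shift vectors and hyperplane equations come out exactly as in (ii), and check that the second hyperplane $x_2+\tfrac32 y_1=0$ genuinely reduces to the first modulo the symmetry (i) rather than giving an independent constraint; the factor $\theta_J=\tfrac32$ enters precisely here and must be tracked. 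I would also remark that $\sigma^*$ and $\tau_J^*$ are each injective (the first because $\sigma$ is surjective onto $\aomega$, the second because $\tau_J$ is a bijection, as noted after \eqref{eq:introtauJ2}), so no information is lost in the composition and the description of the image is faithful.
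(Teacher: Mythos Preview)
Your proposal is correct and follows exactly the approach the paper takes: the paper's proof is the single sentence ``This is straightforward from the explicit description of $\tau_J$ given in Table \ref{tbl-3},'' and you have written out that straightforward computation. One small correction: when you carry out the bookkeeping you will find that it is the hyperplane $x_2+\tfrac{3}{2}y_1=0$ (not $x_1+\tfrac{3}{2}y_1=0$) that pulls back to $a-b+\tfrac{1}{2}=0$, while $x_1+\tfrac{3}{2}y_1=0$ pulls back to $a+b+\tfrac{7}{2}=0$; as you correctly anticipated, the two are interchanged by the symmetry (i), so this swap does not affect the argument.
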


\begin{proof}
This is straightforward from the explicit description of $\tau_J$ given in Table \ref{tbl-3}.
\end{proof}
Now for  $D\in\sPD(V)^\g g$ we have 
$\sigma^*\left(\oline\HC(D)\right)\in\C[a,b,c]$. 
 We have a direct sum decomposition $\g g\cong \g g'\oplus\g z$ where $\g g':=[\g g,\g g]\cong\g{osp}(2|4)$ and $\g z:=\g z(\g g)\cong \C$ is the centre of $\g g$. 
 
From now on, let $\cG$ denote the simply connected complex Lie supergroup
corresponding to $\g g$, and let $\cV$ be the affine superspace corresponding to $V$. The $\g g$-action on $V$ lifts to a $\cG$-action on $\cV$. Set $\cK:=\mathrm{stab}_{\cG}^{}(e)$ (see Appendix \ref{AppndxC}). 
 Since 
 $\g z$ is $\cK$-invariant, 
\[
\bfU(\g g)^{\cK}\cong\big(
\bfU(\g g')\otimes \bfU(\g z)\big)^{\cK}\cong 
\bfU(\g g')^{\cK}\otimes \bfU(\g z).
\]  
Remark \ref{rmk:embedP(V)inGK} implies that 
every $D\in\sPD(V)^{\g g}$ can be realized as an element of
$\sD(\cG/\cK)$.
Let $\Psi_{\cG,\cK}$ be defined as in 
\eqref{eq:UgtoDG/K}.
Since 
there is a $\cK$-invariant complement of $\g k$ in $\g g$,
Proposition \ref{prp:UguK->DG/K} implies that 
every $D\in \mathscr D(\cG/\cK)$ lies in  $\Psi_{\mathcal G,\mathcal K}\left(\bfU(\g g)^{\cK}\right)$. 
\begin{lem}
\label{lem:DPsiUg')}

Assume that $J\cong \mathit{F}$. Let $D\in \sPD(V)^\g g$. 
\begin{itemize}
\item[\rm (i)] If $D\big|_{\cG/\cK}\in
\Psi_{\mathcal G,\mathcal K}\left(1\otimes \bfU(\g z)\right)$, then $\sigma^*\left(\oline\HC(D)\right)\in\C[c]$.
\item[\rm (ii)]
 If $D\big|_{\cG/\cK}\in
\Psi_{\mathcal G,\mathcal K}\left(\bfU(\g g')^{\cK}\otimes 1\right)$,
then $\sigma^*\left(\oline\HC(D)\right)\in\C[a,b]$.  
\end{itemize}

\end{lem}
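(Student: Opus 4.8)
The plan is to read the eigenvalue function $\oline{\HC}(D)$ off the action, on the summands $V_\lambda\sseq\sP(V)$, of the element of $\bfU(\g g)^{\cK}$ realizing $D\big|_{\cG/\cK}$, and then to observe that in the coordinates \eqref{eq:muabc} the coordinate $c$ records the central $\g z$-action on $V_\lambda$ whereas $a,b$ record its $\g g'$-module structure. I would first fix the geometric input: since $\sP(V)$ is multiplicity-free, the $\cG$-equivariant embedding $\sP(V)\into\C[\cG/\cK]$ of Remark \ref{rmk:embedP(V)inGK} carries each $V_\lambda$ onto a single isotypic block, namely the space of matrix coefficients $g\cK\mapsto\langle\xi_\lambda,g\cdot v\rangle$, $v\in V_\lambda$, for the (up to scalar unique) nonzero $\xi_\lambda\in(V_\lambda^{*})^{\cK}$, with $v\mapsto\langle\xi_\lambda,\,\cdot\,v\rangle$ an isomorphism of $\cG$-modules onto that block. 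On this block the operator $\Psi_{\cG,\cK}(\tilde u)$, $\tilde u\in\bfU(\g g)^{\cK}$, is governed by the $\g g$-action of $\tilde u$ on $V_\lambda$ (through $V_\lambda$ or its dual; the distinction will not matter below). Since $\cK$ acts trivially on $\g z$ and $\g z$ is central, $\bfU(\g g)^{\cK}\cong\bfU(\g g')^{\cK}\otimes\bfU(\g z)$ with $1\otimes\bfU(\g z)\sseq\bfZ(\g g)$; and $\sigma$ identifies $\C^{3}$ with $\aomega$, in which $\{\hww{\lambda}:\lambda\in\Omega\}$ is Zariski dense (Definition \ref{dfnaOmeg*}), so a polynomial on $\aomega$ is determined by its values at the $\hww{\lambda}$.

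For part (i), write $D\big|_{\cG/\cK}=\Psi_{\cG,\cK}(1\otimes u)$ with $u\in\bfU(\g z)=\sS(\g z)$. As $1\otimes u\in\bfZ(\g g)$, it acts on the irreducible $V_\lambda$ by a scalar, which therefore equals $\oline{\HC}(D)(\hww{\lambda})$; evaluating $1\otimes u$ on a $\g b$-highest weight vector of $V_\lambda$ identifies this scalar with $u$, regarded as a polynomial function on $\g z^{*}$, evaluated at $\hww{\lambda}\big|_{\g z}$. Now $\g z=\C h$ is one-dimensional ($h$ the grading element), so $\hww{\lambda}\big|_{\g z}$ is pinned down by $\hww{\lambda}(h)$; from the realization of Case V in Section \ref{sec-BorelKemd} one has $\eps_{1}(h)=\delta_{1}(h)=\delta_{2}(h)=0$ while $\zeta(h)\neq 0$, and combined with the fact (Table \ref{tbl-3}) that the $\zeta$-coordinate of $\hww{\lambda}$ is $|\lambda|=c$ this gives $\hww{\lambda}(h)=c\,\zeta(h)$. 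Hence $\oline{\HC}(D)(\hww{\lambda})$ is a fixed polynomial in $c$, and by the density remark $\sigma^{*}(\oline{\HC}(D))\in\C[c]$.

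For part (ii), write $D\big|_{\cG/\cK}=\Psi_{\cG,\cK}(u\otimes 1)$ with $u\in\bfU(\g g')^{\cK}$. Invariance of $D\in\sPD(V)^{\g g}$ forces $D$, hence $u$, to act on the irreducible $V_\lambda$ by a scalar. Moreover $V_\lambda\big|_{\g g'}$ is again irreducible (any $\g g'$-submodule is $\g g$-stable since $\g z$ acts by scalars) with highest weight $\hww{\lambda}\big|_{\g h\cap\g g'}$, and evaluating $u$ on a $(\g b\cap\g g')$-highest weight vector identifies the scalar with $\HC_{\g g'}(u)\big(\hww{\lambda}\big|_{\g h\cap\g g'}\big)$, where $\HC_{\g g'}$ is the (unshifted) Harish-Chandra projection of $\g g'$ for $\g g'=\g n^{-}\oplus(\g h\cap\g g')\oplus\g n$. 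It remains to note, again from Section \ref{sec-BorelKemd}, that $\zeta$ vanishes on $\g h\cap\g g'$ (equivalently $\g h\cap\g g'=\g h\cap\ker\zeta$), which holds because the roots of $\g g\cong\g{gosp}(2|4)$ are those of $\g g'\cong\g{osp}(2|4)$ and hence have no $\zeta$-component. Thus, in the coordinates \eqref{eq:muabc}, $\hww{\lambda}\big|_{\g h\cap\g g'}=a\,\eps_{1}\big|_{\g h\cap\g g'}+b\,(\delta_{1}+\delta_{2})\big|_{\g h\cap\g g'}$ depends only on $a,b$, so $\oline{\HC}(D)(\hww{\lambda})$ is a fixed polynomial in $a,b$, and by density $\sigma^{*}(\oline{\HC}(D))\in\C[a,b]$.

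The computations are routine; the care lies in two places. First, one must set up the matrix-coefficient identification precisely enough that ``$\Psi_{\cG,\cK}(\tilde u)$ on the $V_\lambda$-block is governed by $\tilde u$ acting on $V_\lambda$'' holds on the nose, and move cleanly between $D$ viewed as an operator on $\sP(V)$ and as an element of $\sD(\cG/\cK)$. Second---and this is the real point---one must verify the structural facts from Section \ref{sec-BorelKemd} that split the coordinates: $\g z=\C h$ with $\eps_{1}(h)=\delta_{1}(h)=\delta_{2}(h)=0$ and $\zeta(h)\neq 0$, and dually $\zeta\big|_{\g h\cap\g g'}=0$. These are exactly what make the dichotomy of the lemma true, so I would check them against the Case V data with some care; everything else is formal.
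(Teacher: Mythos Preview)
Your argument for part (i) is fine and matches the paper's assertion that it is straightforward: since $1\otimes u\in\bfU(\g z)\sseq\bfZ(\g g)$, its left-invariant operator is bi-invariant and the eigenvalue on $V_\lambda$ is determined by $\hww\lambda\big|_{\g z}$, which in the coordinates \eqref{eq:muabc} depends only on $c$.

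The gap is in part (ii), at the step ``hence $u$ acts on the irreducible $V_\lambda$ by a scalar.'' What you actually know is that $\Psi_{\cG,\cK}(u)$ acts by a scalar on the image of $V_\lambda$ in $\ccO_{\cG/\cK}$. But $\Psi_{\cG,\cK}(u)=L_u$ is a \emph{left}-invariant operator, and on a matrix coefficient $\phi_p(g)=\langle\xi_\lambda,g^{-1}\cdot p\rangle$ it acts through the $\cK$-fixed covector $\xi_\lambda\in(V_\lambda^*)^{\cK}$, not through $p$. So the equality $L_u\phi_p=c_\lambda\phi_p$ only says that $\xi_\lambda$ is an eigenvector of (the antipode of) $u$ acting on $V_\lambda^*$; it does \emph{not} say that $u$ acts by a scalar on all of $V_\lambda$ or $V_\lambda^*$. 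Consequently your next step---evaluating $u$ on a $\g b\cap\g g'$-highest weight vector and reading off $\HC_{\g g'}(u)(\hww\lambda|_{\g h'})$---is not justified, because that computation presupposes $u\cdot v_+=c_\lambda v_+$, which you have not established. For a generic element of $\bfU(\g g')^{\cK}$ there is no reason for this to hold.

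The paper avoids this by a different mechanism. Rather than invoking the ordinary triangular Harish-Chandra projection of $\g g'$, it restricts the picture to the subsupergroup $\cG'\sseq\cG$ with $\Lie(\cG')=\g g'$, observes that the highest weight function $\phi_\lambda$ restricts nontrivially to $\cG'/\cK$, and then runs the Iwasawa-type argument of Proposition~\ref{prp-g=k+bdegofDlam} with the decomposition $\g g'=\g k+(\g b\cap\g g')$. That argument decomposes $u=D_1+D_2+D_3$ with $D_1\in(\g n\cap\g g')\bfU(\g g')$, $D_2\in\bfU(\g h')$, $D_3\in\bfU(\g g')\g k$, and shows directly that $L_u\tilde\phi_\lambda(h)=\hww\lambda(D_2)\tilde\phi_\lambda(h)$ on the Cartan of $\cG'$; the eigenvalue then visibly depends only on $\hww\lambda\big|_{\g h'}$, i.e.\ on $(a,b)$. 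The point is that this uses the $\g k$--$\g b'$ decomposition rather than the $\g n'^-$--$\g h'$--$\g n'$ decomposition, and therefore never needs $u$ to act as a scalar on the module.
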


\begin{proof}
The proof of (i) is straightforward. For (ii), let 
$\cG'$ be the subsupergroup of $\cG $ corresponding to $\g g'\sseq\g g$. For $\tilde X\in \bfU(\g g)^\cK$, let $\mathrm{L}_{\tilde X}$ denote the left invariant differential operator induced by $\tilde X$ on 
$\cG/\cK$ (see 
Appendix \ref{AppndxC}). Similarly, any 
$\tilde X\in \bfU(\g g')^{\cK}$ induces a left invariant differential operator on 
$\cG'/\cK$, which we denote by $\mathrm{L}_{\tilde X}$ as well.
For $\tilde X\in \bfU(\g g')^{\cK}\sseq\bfU(\g g)^{\cK}$, the diagram
\[
\xymatrix@C=+20mm{
\ccO_{\cG/\cK}(G/K)
\ar[r]^{\phi\mapsto \phi|_{\cG'/\cK}}
\ar[d]_{\mathrm{L}_{\tilde X}} & \ccO_{\cG'/\cK}(G'/K)\ar[d]^{\mathrm{L}_{\tilde X}}\\
\ccO_{\cG/\cK}(G/K)\ar[r]_{\phi\mapsto \phi|_{\cG'/\cK}} & \ccO_{\cG'/\cK}(G'/K)
}
\]
is commutative. 
According to Remark \ref{rmk:embedP(V)inGK}, there is a $\cG$-equivariant embedding 
\[
\sfp_e^\#(V_\eev)\big|_{\sP(V)}:\sP(V)\into 
\ccO_{\cG/\cK}(G/K).
\] For $\lambda\in\Omega$, let $p_\lambda\in V_\lambda\sseq \sP(V)$ be a $\g b$-highest weight vector and 
set 
$\phi_\lambda:=\sfp_e^\#(V_\eev)(p_\lambda)$.
Since $\phi_\lambda\neq 0$ and $p_\lambda$ is a homogeneous element of $\sP(V)$, we obtain $\phi_\lambda\big|_{\cG'/\cK}\neq 0$. 
Set $\g h':=\g h\cap\g g'$. 
From Proposition \ref{prp-g=k+bdegofDlam} 
and the fact that $\g g'=\g k+(\g b\cap \g g')$
it follows that $D$ acts on $V_\lambda$ is by the scalar $\hww\lambda(\tilde D_{\g h'})$ for
some  $\tilde D_{\g h'}\in \bfU(\g h')$ that only depends on $D$. Since the map 
$\hww\lambda\mapsto \hww\lambda(\tilde D_{\g h'})$ only depends on $\hww\lambda\big|_{\g h'}$, it is indeed an element of 
$\sP(\g {a'}^*)$, where 
$\g {a'}^*:=\left\{\mu_{a,b,0}\,:\,a,b\in\C\right\}$, for
$\mu_{a,b,c}$ defined as in \eqref{eq:muabc}.
 Finally, to complete the proof observe that $\sigma^*\left(\sP(\g {a'}^*)\right)=\C[a,b]$.
\end{proof}

\begin{prp}
\label{prp:ab-b-3}
Let $D\in\sPD(V)^{\g g}$. If $D\big|_{\cG/\cK}
\in\Psi_{\cG,\cK}(\bfU(\g g')^\cK\otimes 1)$, then  
$h:=\sigma^*\left(\oline\HC(D)\right)$ satisfies the relation $h(a,b)=h(a,-b-3)$.
\end{prp}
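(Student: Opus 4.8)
The plan is to derive the symmetry $h(a,b)=h(a,-b-3)$ from the fact that, under the hypothesis $D|_{\cG/\cK}\in\Psi_{\cG,\cK}(\bfU(\g g')^\cK\otimes 1)$, the operator $D$ descends to an invariant differential operator on the symmetric superspace attached to $(\g g',\g k)$, where $\g g':=[\g g,\g g]\cong\g{osp}(2|4)$; since $\g k\cong\g{osp}(1|2)\oplus\g{osp}(1|2)$ is semisimple we have $\g k=[\g k,\g k]\subseteq\g g'$, hence $\cK\subseteq\cG'$, and the restricted root system of $(\g g',\g k)$ is again $\mathit{\Sigma}$, of type $\mathsf{A}(1,0)$, with little Weyl group $W(\mathit{\Sigma})\cong S_2$. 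By the computation in the proof of Lemma~\ref{lem:DPsiUg')}(ii) the scalar by which $D$ acts on $V_\lambda$ equals $\hww{\lambda}(\tilde D_{\g h'})$ for one fixed $\tilde D_{\g h'}\in\bfU(\g h')$, $\g h':=\g h\cap\g g'$, and in the coordinates $\mu_{a,b,c}$ of \eqref{eq:muabc} this is exactly $h(a,b)=\sigma^*(\oline\HC(D))$. So what must be shown is that $h$ is unchanged by $b\mapsto -b-3$, and the point to keep in mind is that on the $\mu_{a,b,c}$-parametrization of $\aomega$ this substitution is precisely the $\rho_{\mathit{\Sigma}}$-shifted action of the nontrivial element of $W(\mathit{\Sigma})$ — equivalently, under the affine map $\tau_J$ it is the transposition of the Frobenius coordinates $\sfp_1,\sfp_2$ of \eqref{eq:piqiFrb}.

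First I would make this identification explicit from the root data of $\g g'\cong\g{osp}(2|4)$ recorded in Section~\ref{sec-BorelKemd} (together with \eqref{eq:bbexoddref}): compute $\rho_{\mathit{\Sigma}}$ from the multiplicities in Table~\ref{tbl-JtypeA} and verify that the reflection $s:=s_{\underline{\eps}_1-\underline{\eps}_2}$ sends $\mu_{a,b,c}$ to $\mu_{a,-b-3,c}$ under $\mu\mapsto s(\mu+\rho_{\mathit{\Sigma}})-\rho_{\mathit{\Sigma}}$, the constant $3$ being twice the difference of the two $\underline{\eps}$-components of $\rho_{\mathit{\Sigma}}$. This is routine.

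The substantive step is to show that $h$ really has this symmetry. I would use that the typical highest weights of the form \eqref{eq:hwabessa} occurring in $\sP(V)$ — which form a Zariski-dense subset of $\aomega$ and whose typicality is established inside the proof of Theorem~\ref{thm:compl-red} for Case~V — give, as $\g g'$-modules, Kac modules $V_\lambda$, and combine this with a lift $n_s$ of $s$ to $N_{\cK}(\g a_\eev)$, which exists because $N_{\cK}(\g a_\eev)/Z_{\cK}(\g a_\eev)\cong W(\mathit{\Sigma})$. Since $D$ is $\cK$-invariant, $\Ad_{n_s}D=D$; applying $n_s$ to the $\g b$-highest weight vector $p_\lambda$ of $V_\lambda$ produces (via the odd-reflection calculus of \cite[Lem.~1.40]{CWBook} and typicality, exactly as in the proof of Lemma~\ref{lem:DPsiUg')}) a nonzero highest weight vector of $V_\lambda$ for a second Borel of $\g g'$, whose highest weight is the shifted reflection $s\cdot\hww{\lambda}$. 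Repeating the Harish-Chandra projection of Lemma~\ref{lem:DPsiUg')}(ii) relative to this second Borel then yields $h(a,-b-3)$ as a second expression for the one eigenvalue of $D$ on $V_\lambda$; comparing the two over the Zariski-dense set of such $\hww{\lambda}$ and using that $h$ is a polynomial gives $h(a,b)=h(a,-b-3)$.

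I expect the main obstacle to be precisely the insensitivity of the Harish-Chandra datum $\tilde D_{\g h'}$ to the passage from the Borel $\g b\cap\g g'$ to the $n_s$-reflected one. For the centre $\bfZ(\g g')$ this would be the classical $\rho$-shifted Borel-independence of the Harish-Chandra homomorphism, but $D$ corresponds only to a $\cK$-invariant — not central — element of $\bfU(\g g')$, so one must use the symmetric-space decomposition $\g g'=\g k+(\g b\cap\g g')$ and its $n_s$-conjugate, the typicality of $\hww{\lambda}$ (which ensures that the relevant highest weight vectors of $V_\lambda$ restrict to nonzero sections on $\cG'/\cK$, as in the proof of Lemma~\ref{lem:DPsiUg')}), and $\cK$-invariance of $D$, rather than pure enveloping-algebra bookkeeping. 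An alternative packaging — invoking a Harish-Chandra isomorphism for the symmetric superspace $\cG'/\cK$ directly, to the effect that the eigenvalue function on $\aomega$ is invariant under the $\rho_{\mathit{\Sigma}}$-shifted action of $W(\mathit{\Sigma})$ — merely shifts the burden to establishing that theorem with the (possibly negative) super multiplicities of $\mathit{\Sigma}$. The remaining bookkeeping — confirming the shift is exactly $3$ — is routine.
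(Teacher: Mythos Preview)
Your proposal is a genuinely different route from the paper's, and the obstacle you flag is the real one: you are essentially asserting a $\rho_{\mathit{\Sigma}}$-shifted Weyl invariance of the eigenvalue polynomial for the symmetric \emph{super}space $\cG'/\cK$, and you do not establish it. The conjugation argument you outline collapses to a tautology: if $n_s$ normalizes $\g h'$ and you apply $\Ad_{n_s}$ to the decomposition $\tilde D=\tilde D_{\g h'}+D'$ with $D'\in\tilde{\g n}\bfU(\g g')+\bfU(\g g')\g k$, you obtain the $\g b'$-decomposition with Cartan part $\Ad_{n_s}\tilde D_{\g h'}$; pairing this against the weight of $n_s\cdot p_\lambda$ (which is the \emph{unshifted} $s(\hww{\lambda})$) reproduces $\hww{\lambda}(\tilde D_{\g h'})$ and yields no new relation. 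The shift by $3$ has to come from comparing two genuinely different highest weights of the \emph{same} module with respect to two Borels, and your appeal to the odd-reflection calculus does not supply this: $\Ad_{n_s}$ is an even conjugation, so $\g b'$ and $\g b$ differ by an even reflection, and tracking the $\g b'$-highest weight of $V_\lambda$ back to $\hww{\lambda}$ is exactly the hard step you would need the Harish-Chandra theory for.

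The paper sidesteps the issue entirely by a reduction to the purely even situation. It splits $V^*=U^*\oplus W^*$ as $\g g_\eev$-modules with $U^*:=M(\eps_1)\otimes M'(\delta_1+\delta_2)$, and observes the key fact $U^*=(V^*)^{\g b_\ood}$, so every $\g b_\eev$-highest weight vector in $\sP(U)\subset\sP(V)$ is already a $\g b$-highest weight vector of $\sP(V)$. Decomposing $\sPD(V)=\sPD(U)\oplus\sPD(U)^\perp$ compatibly and writing $D=D_U+D_U^\perp$, one checks directly that $D_U^\perp$ kills these highest weight vectors, so $D$ acts on them by the eigenvalue of $D_U\in\sPD(U)^{\g g_\eev}$. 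Now the purely even multiplicity-free action of $\g{so}(2)\oplus\g{sp}(4)$ on $U$ is covered by Howe--Umeda \cite[Sec.~11.4]{HoweUmeda}: $D_U$ lies in the algebra generated by the degree operator and the $\g{sp}(4)$-Casimir, whose eigenvalues are $h_1(a,b)=a$ and $h_2(a,b)=(b+2)^2+(b+1)^2$, each manifestly invariant under $b\mapsto -b-3$. Since by Lemma~\ref{sp4lem-SC} the set of $\g b$-highest weights arising in $\sP(U)$ is Zariski dense in the $(a,b)$-plane and $h$ is independent of $c$ by Lemma~\ref{lem:DPsiUg')}(ii), the symmetry follows. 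This is more hands-on but avoids any unproved Harish-Chandra theorem for $\cG'/\cK$; your approach, if it could be completed, would be more conceptual and presumably uniform across cases, but as written it does not close.
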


\begin{proof}
Let us denote the one-dimensional $\g{so}(2)$-module with  weight $k\eps_1$ by $M(k\eps_1)$, and the irreducible $\g{sp}(4)$-module with 
$\g b\cap\g{sp}(4)$-highest weight   
$k_1\delta_1+k_2\delta_2$ by $M'(k_1\delta_1+k_2\delta_2)$. We have 
$V^*\cong V_\eev^*\oplus V_\ood^*$, where
\[
V_\eev^*\cong 
M(\eps_1)\otimes M'(\delta_1+\delta_2)\oplus M(3\eps_1)\otimes M'(0)\ \text{ and }\ V_\ood^*\cong  M(2\eps)\otimes M'(\delta_1),
\]
as $\g{so}(2)\oplus\g{sp}(4)$-modules.
Set \[
U^*:=M(\eps_1)\otimes M'(\delta_1+\delta_2)\ \text{ and }\ W^*:=M(3\eps_1)\otimes M'(0)\oplus  M(2\eps)\otimes M'(\delta_1).
\] 
Then $V^*\cong U^*\oplus W^*$, 
from which we obtain a natural tensor product decomposition 
\begin{equation}
\label{sPVsPsDDD}
\sP(V)\cong\sP(U)\otimes \sP(W).
\end{equation}
Using \eqref{sPVsPsDDD}, we identify 
$\sP(U)$ and $\sP(W)$ with subalgebras 
$\sP(U)\otimes 1$ and $1\otimes \sP(W)$ of 
$\sP(V)$.
By dualizing the relation $V^*\cong U^*\oplus W^*$ 
we obtain a direct sum decomposition $V\cong U\oplus W$ for subspaces $U,W$ of $V$. The latter direct sum decomposition yields a tensor product decomposition 
\begin{equation}
\label{sPVsPsDDD1}
\sD(V)\cong\sD(U)\otimes \sD(W),
\end{equation}
which allows us to identify $\sD(U)$ and $\sD(W)$ with subalgebras of $\sD(V)$.

It is straightforward to verify that  $U^*=(V^*)^{\g b_\ood}$. Thus  every $\g b_\eev$-invariant vector in $\sP(U)\subset \sP(V)$ is also a $\g b$-highest weight vector in $\sP(V)$. 
By Lemma \ref{sp4lem-SC}, we have an $\g{so}(2)\oplus 
\g{sp}(4)$-module isomorphism 
\[
\sP^k(U)\cong\bigoplus_{i=0}^{\lfloor\frac{k}{2} \rfloor}M(k\eps_1)\otimes M'\big((k-2i)\delta_1+(k-2i)\delta_2\big).
\]
Recall that $\mu_{a,b,c}$ denotes
the element of $\aomega$ defined in
\eqref{eq:muabc}. 
The Zariski closure of the set \[
S:=\left\{
k\eps_1+(k-2i)\delta_1+(k-2i)\delta_2\,:\,
i,k\in\Z^{\geq 0}\text{ and }0\leq i\leq \lfloor\frac{k}{2}\rfloor\right\}
\]
is equal to $\{\mu_{a,b,0}\,:\,a,b\in\C\}$.

Let $\sP^+(W)$ and $\sD^+(W)$ denote the augmentation ideals of $\sP(W)$ and $\sD(W)$, respectively. 
Using \eqref{sPVsPsDDD}, we obtain a decomposition
\begin{equation}
\label{sPDVcUoplusW}
\sPD(V)\cong \sPD(U)\oplus\sPD(U)^\perp,
\end{equation}
where 
\[
\sPD(U)^\perp:=\sPD(V)\sD^+(W)+\sP^+(W)\sPD(U)
.
\]
Write 
$D:=D_U\oplus D_U^\perp$, where $D_U\in\sPD(U)$ and $D_U^\perp\in\sPD(U)^\perp$.  
For $0\leq i\leq \lfloor\frac{k}{2}\rfloor$, choose a nonzero vector \[
v_{k,i}\in M(k\eps_1)\otimes M'\big((k-2i)\delta_1+(k-2i)\delta_2\big)
.\] Then $Dv_{k,i}=c_{k,i} v_{k,i}$ for some scalar $c_{k,i}\in\C$, hence
$D_U^\perp v_{k,i}=Dv_{k,i}-D_Uv_{k,i}\in\sP(U)$. Since $\sD^+(W)v_{k,i}=0$, we obtain $D_U^\perp v_{k,i}\in\sP(U)\cap \left(\sP^+(W)\sP(V)\right)=\{0\}$, so that $Dv_{k,i}=D_Uv_{k,i}$. Since 
the decomposition \eqref{sPDVcUoplusW} is $\g g_\eev$-invariant, we have   $D_U\in\sPD(U)^{\g g_\eev}$. Therefore from \cite[Sec. 11.4]{HoweUmeda} it follows that $D_U$ lies in the algebra generated by the degree operator and the image of the Casimir operator of $\g{sp}(4)$. 
Let $f_1,f_2\in\sP(\aomega)$ denote the 
eigenvalues of the degree and Casimir operators, respectively, and set $h_i:=\sigma^*(f_i)$ for $i=1,2$.
The degree operator acts on $v_{k,i}$ by the scalar $k$. It follows that  $h_1(a,b)=a$, and therefore  $h_1(a,b)=h_1(a,-b-3)$.
Similarly, the Casimir operator of $\g{sp}(4)$ acts on $v_{k,i}$ by the scalar $(k+2)^2+(k+1)^2$, so that $h_2(a,b):=(b+2)^2+(b+1)^2$, and therefore $h_2(a,b)=h_2(a,-b-3)$.
The statement of the proposition follows from the fact that
$h$ belongs to the subalgebra
of $\C[a,b]$ generated by $h_1$ and $h_2$. \end{proof}

\begin{prp}
\label{prp:a+1b+12}
Let $D\in\sPD(V)^{\g g}$. If $D\big|_{\cG/\cK}
\in\Psi_{\cG,\cK}(\bfU(\g g')^\cK\otimes 1)$, then  
$h:=\sigma^*\left(\oline\HC(D)\right)$ satisfies the relation
$h(a+1,b+\frac{1}{2})=h(a-1,b-\frac{1}{2})$ for all $a,b\in\C$ such that $a-b+\frac{1}{2}=0$.
\end{prp}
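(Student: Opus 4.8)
The strategy mirrors the proof of Proposition \ref{prp:ab-b-3}: I will again decompose $V\cong U\oplus W$ into $\g g_\eev$-submodules (with $U^*=(V^*)^{\g b_\ood}$), so that the $\g b$-highest weight vectors coming from $\sP(U)$ lie on the Zariski-dense subset
\[
S:=\left\{k\eps_1+(k-2i)\delta_1+(k-2i)\delta_2\,:\,0\leq i\leq\lfloor\tfrac{k}{2}\rfloor\right\}
\]
of $\{\mu_{a,b,0}\}$, and the operator $D$ restricted to $\sP(U)$ equals its ``$U$-component'' $D_U\in\sPD(U)^{\g g_\eev}$. By \cite[Sec. 11.4]{HoweUmeda}, $D_U$ lies in the algebra generated by the degree operator and the Casimir of $\g{sp}(4)$, whose eigenvalue polynomials are $h_1(a,b)=a$ and $h_2(a,b)=(b+2)^2+(b+1)^2$. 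Hence $h:=\sigma^*(\oline\HC(D))$ lies in $\C[h_1,h_2]$. The substitution $a-b+\tfrac12=0$ forces $a=b-\tfrac12$; writing everything as a polynomial in $b$, I must check that the map $b\mapsto b+\tfrac12$ (which on the hyperplane corresponds to $(a,b)\mapsto(a+1,b+\tfrac12)$, since shifting $a$ by $1$ is forced by staying on the hyperplane is incorrect — rather, I substitute directly) sends $h_1(a+1,b+\tfrac12)=a+1=b+\tfrac12$ and $h_1(a-1,b-\tfrac12)=a-1=b-\tfrac32$, which are \emph{not} equal, so the argument cannot be purely about $h_1,h_2$ individually — the relation must hold only for the specific $h$'s that actually arise, i.e. those in the image of the Harish-Chandra homomorphism.

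**Reformulation via the Harish-Chandra image.** The correct route is: $\oline\HC(D)$, being a Harish-Chandra image of an element of $\bfU(\g g')^\cK$, lies in $\mathsf{res}(\HC(\bfZ(\g g')))$ up to the role of $\cK$-invariants, and more to the point, the relevant eigenvalue polynomial must be $W'$-invariant for the relevant (restricted) Weyl group of the symmetric pair $(\g g',\g k')$, together with the supersymmetry (Weyl-groupoid) relations on the atypicality hyperplanes. Concretely: I would invoke the explicit description of $\mathsf{res}(\HC(\bfZ(\g{osp}(2|4))))$ recalled in the proof of Proposition \ref{prp:typeFresHCsseqtauJ*}, which is generated by the $f_k(\mu_{a,b,c})=(a+2)^{2k}-(b+2)^{2k}-(b+1)^{2k}$ and the $F_g$. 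One checks directly that each generator, after the change of variable and restriction to $c=0$, satisfies $h(a+1,b+\tfrac12)=h(a-1,b-\tfrac12)$ on $a-b+\tfrac12=0$: on that hyperplane $a+2=b+\tfrac32$ and $a-b-\tfrac12$-type combinations appear symmetrically. Indeed $(a+1)+2=b+\tfrac52=(b+2)+\tfrac12$ and $(a-1)+2=b+\tfrac12=(b+1)+\tfrac12$, so under $(a,b)\mapsto(a+1,b+\tfrac12)$ versus $(a-1,b-\tfrac12)$ the triple $\{(a+2)^2,(b+2)^2,(b+1)^2\}$ is permuted (up to the $\tfrac12$-shift bookkeeping forced by the hyperplane), which is exactly the statement that these polynomials are invariant under the odd reflection. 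This is precisely the vanishing/symmetry property (ii) defining $\Lambda_{2,1,3/2}^\natural$ transported through $\tau_J$, as recorded in Lemma \ref{lemDG/KnotdpzZ}(ii).

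**Carrying it out.** So the plan is: (1) By Lemma \ref{lem:DPsiUg')}(ii), $h=\sigma^*(\oline\HC(D))\in\C[a,b]$. (2) Because $D|_{\cG/\cK}\in\Psi_{\cG,\cK}(\bfU(\g g')^\cK\otimes 1)$, the scalar by which $D$ acts on $V_\lambda$ depends only on $\hww\lambda|_{\g h'}$ and is given by $\hww\lambda(\tilde D_{\g h'})$ for a fixed $\tilde D_{\g h'}\in\bfU(\g h')$ (as in the proof of Lemma \ref{lem:DPsiUg')}); moreover $D$ being $\g g'$-invariant forces $\oline\HC(D)\in\mathsf{res}(\HC(\bfZ(\g g')))$ by the standard argument (Harish-Chandra image of the centre). (3) Appeal to the explicit description of the Harish-Chandra image of $\g{osp}(2|4)$ from Proposition \ref{prp:typeFresHCsseqtauJ*}: every generator, written in $(a,b)$ after setting $c=0$, satisfies the desired relation on the hyperplane $a-b+\tfrac12=0$, by the elementary permutation-of-$\{(a+2)^2,(b+2)^2,(b+1)^2\}$ computation above. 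Hence so does every polynomial in them, in particular $h$. **The main obstacle** I expect is step (2)–(3): justifying rigorously that $\oline\HC(D)$ for $D\in\sPD(V)^{\g g}$ with $D|_{\cG/\cK}$ in the image of $\bfU(\g g')^\cK$ actually lands in (the restriction of) the Harish-Chandra image of $\bfZ(\g g')$ rather than merely some $\cK$-relative invariants — i.e. controlling the gap between $\bfU(\g g')^\cK$ and $\bfZ(\g g')$ — and then matching the explicit generators to the odd-reflection symmetry; the permutation computation itself is routine but must be done with care about the half-integer shifts dictated by the hyperplane.
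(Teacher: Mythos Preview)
Your proposal has a genuine gap, and it is precisely the obstacle you yourself flag at the end: the passage from $D\big|_{\cG/\cK}\in\Psi_{\cG,\cK}(\bfU(\g g')^\cK\otimes 1)$ to $\oline\HC(D)\in\mathsf{res}(\HC(\bfZ(\g g')))$ is not justified, and in fact cannot be justified in this case. The hypothesis only says that $D\big|_{\cG/\cK}$ is the image of some element of $\bfU(\g g')^\cK$, not of $\bfZ(\g g')$; and the entire point of the $J\cong\mathit F$ case (Proposition \ref{prp:typeFresHCsseqtauJ*} and Proposition \ref{prpJ===Fsjscfd}) is that $\sfj(\bfZ(\g g))\subsetneq\sPD(V)^\g g$, so there really exist invariant operators whose eigenvalue polynomial lies outside the Harish-Chandra image of the centre. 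Your step~(3) would then be vacuous for exactly the operators that matter. Put differently, if one could show $h\in\mathsf{res}(\HC(\bfZ(\g g')))$ for every such $D$, then together with the easy inclusion $\mathsf{res}(\HC(\bfZ(\g g)))\sseq\tau_J^*(\Lambda_J)$ one would already have $\oline\HC(\sPD(V)^\g g)\sseq\mathsf{res}(\HC(\bfZ(\g g)))$, contradicting Proposition \ref{prpJ===Fsjscfd}.

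The paper's proof takes a completely different route that does not pass through the centre at all. It introduces a \emph{second} Borel subalgebra $\g b_{2|4}$ (related to $\g b$ by the four odd reflections in \eqref{eq:bbexoddref}) and uses that $\g g=\g b_{2|4}+\g k^\mathrm{ex}$, so by Proposition \ref{prp-g=k+bdegofDlam} the eigenvalue of $D$ is also a polynomial $h_1$ in the $\g b_{2|4}$-highest weight parameters. Comparing $\g b$- and $\g b_{2|4}$-highest weights via \cite[Lem.~1.40]{CWBook} gives $h_1(a+4,b)=h(a,b)$ from the typical weights and $h_1(a+2,a-1)=h(a,a)$ from the atypical weights $\hww\lambda=d\eps_1+d\delta_1+d\delta_2+d\zeta$. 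On the hyperplane $a-b+\tfrac12=0$ one has $a+1=b+\tfrac12$, so these two identities combine to give $h(a+1,b+\tfrac12)=h(a+1,a+1)=h_1(a+3,a)=h_1((a-1)+4,b-\tfrac12)=h(a-1,b-\tfrac12)$. The key idea you are missing is this use of a second Iwasawa-type decomposition $\g g=\g b_{2|4}+\g k$ together with the odd-reflection bookkeeping; it replaces the inaccessible ``centre'' argument by a purely combinatorial comparison of two highest-weight parametrizations.
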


\begin{proof}
Let 
$\g b_{2|4}\sseq\g g$ be the 
Borel subalgebra defined as in
\eqref{bm|2ngosp}.
Note that $\g b_{2|4}\neq \g b$.
For $\lambda\in\Omega$, let $\underline{\underline{\lambda}}$ denote the $\g b_{2|4}$-highest weight of $V_\lambda$. 
The Borel subalgebra $\g b$ can be obtained from $\g b_{2|4}$ by the composition of the odd reflections 
given in \eqref{eq:bbexoddref}.
Thus from \cite[Lem. 1.40]{CWBook} it follows that if $\hww{\lambda}$
is one of the typical highest weights of the form given in \eqref{d+2s-4eps1+bd}, 
then 
$\underline{\underline{\lambda}}:
=\hww{\lambda}+4\eps_1$, whereas
if $\hww{\lambda}=
d\eps_1+d\delta_1+d\delta_2+d\zeta$, then $\underline{\underline{\lambda}}
=\hww{\lambda}+2\eps_1-\delta_1-\delta_2$. 
In particular, the
$\g b_{2|4}$-highest weight $\underline{\underline{\lambda}}$
always lies in $\aomega$, where 
$\aomega$ is the subspace of $\g h^*$  given in \eqref{eq:muxyzF}.

Now for every $\g b_{2|4}$-highest weight $\underline{\underline{\lambda}}\in\aomega$, let $f(\underline{\underline{\lambda}})$ denote the scalar by which $D$ acts on $V_\lambda$. 
We have $\g g=\g b_{2|4}+\g k^\mathrm{ex}$, and thus
by Proposition \ref{prp-g=k+bdegofDlam} the map 
$\underline{\underline{\lambda}}\mapsto f(\underline{\underline{\lambda}})$ 
can be extended to an element of $\sP(\aomega)$.  
Set $h_1:=\sigma^*(f)$.
From  Lemma \ref{lem:DPsiUg')}(ii) it follows that indeed $h_1\in\C[a,b]$.
Since the eigenvalue of $D$ on $V_\lambda$ is independent of the choice of the Borel subalgebra, from typical highest weights we  obtain \begin{equation}
\label{eq:h1eq(1))}
h_1(a+4,b)=h(a,b)\text{ for }a,b\in\C,
\end{equation}
 and from the atypical highest weights, which correspond to the partitions $\lambda:=(d,0,\ldots)$,
we obtain 
\begin{equation}
\label{eq:h1eq(2))}
h_1(a+2,a-1)=h(a,a)
\text{ for }a\in\C.
\end{equation} 
Consequently, if $a-b+\frac{1}{2}=0$
then $a+1=b+\frac{1}{2}$ and thus
from 
\eqref{eq:h1eq(1))} and \eqref{eq:h1eq(2))} it follows that
\[
\textstyle 
h(a+1,b+\frac{1}{2})=h_1(a+3,b-\frac{1}{2})
=h(a-1,b-\frac{1}{2}).
\qedhere
\]
\end{proof}

\subsection{Proof of Theorem \ref{thm-main-opVla} when
$J\cong\mathit{F}$. }
Fix $\mu\in\Omega_d$ for $d\geq 0$.
Then $D_\mu\in\sPD^{(d)}(V)^\g g$ and therefore by Proposition
\ref{prp-g=k+bdegofDlam}
 there exists $h\in\sP^d(\aomega)$ such that for every $\lambda\in\Omega$, the operator $D_\mu$ acts on $V_\lambda$ 
by the scalar
$h(\hww{\lambda})$. 
By Lemma \ref{lem:CapDlamVmu} we have
\begin{equation}
\label{eq:hhwwmuhla=00}
h(\hww{\mu})=d!\text{ and }
h(\hww{\lambda})=0\text{ for }
\lambda\in\bigcup_{k=0}^d \Omega_k\bls\{\lambda\}.
\end{equation}
Let $f\in\sP_J^{(d)}$ be defined by $f:=\left(\tau_J^*\right)^{-1}
\left(h\right)$.
From 
Lemma \ref{lemDG/KnotdpzZ}, Lemma \ref{lem:DPsiUg')}, Proposition
\ref{prp:ab-b-3}, and Proposition 
\ref{prp:a+1b+12} it follows that 
$f\in\Lambda_J^{(d)}$. Furthermore,
\begin{equation}
\label{eq:tauJlam>=2}
\textstyle
\tau_J(\hww{\lambda})
=\left(
\lambda_1+\frac{1}{4},
\lambda_2-\frac{5}{4},d-\lambda_1-\lambda_2+1
\right)
=
\left(\sfp_1(\lambda),\sfp_2(\lambda),\sfq_1(\lambda)\right),
\end{equation}
where $\sfp_1(\lambda)$, $\sfp_2(\lambda)$, and $\sfq_1(\lambda)$ are defined as in \eqref{eq:piqiFrb} for $m=2$, $n=1$, and $\theta=\frac{3}{2}$. 
From \eqref{eq:hhwwmuhla=00} and  \eqref{eq:tauJlam>=2} it follows that $f$ and $P_{J,\mu}$ satisfy the same degree, symmetry, and vanishing properties, so that $f=P_{J,\mu}$ by Theorem \ref{thm:SVSP*}.

\appendix

\section{The TKK construction}
\label{Sec-TKKconst}

Recall that a vector superspace $J:=J_\eev\oplus J_\ood$ is called a 
\emph{Jordan superalgebra} if it is equipped with a supercommutative bilinear product $J\times J\to J$ which satisfies the Jordan identity
\[
(-1)^{|x||z|}[L_x,L_{yz}]+(-1)^{|y||x|}[L_y,L_{zx}]+(-1)^{|z||y|}[L_z,L_{xy}]=0
\ \text{ for homogeneous $x,y,z\in J$,}
\]
where  we define 
 $L_a:J\to J$  for $a\in J$ to be  the left multiplication map $x\mapsto ax$, and denote
the parity of  a homogeneous element $a\in J$ by $|a|$.


Following \cite{CanKac}, by a \emph{short grading} of  a Lie superalgebra $\g l$ we mean a $\Z$-grading of $\g l$ of the form 
$\g l:=\bigoplus_{t\in\Z}\g l(t)$,
such that $\g l(t)=\{0\}$ for $t\not\in\{0,\pm 1\}$.
Using the Kantor functor, 
in \cite{KacJ}   
Kac  
associates to $J$ a simple Lie superalgebra
$\gJJ$ 
(the \emph{TKK Lie superalgebra})
with a short grading 
\[
\gJJ:=
\gJJ(-1)\oplus\gJJ(0)\oplus\gJJ(1).
\]
We recall the definition of $\gJJ$. Set
$\gJJ(-1):=J$, $\gJJ(0):=\spn_\C \{L_a,[L_a,L_b]\,:\,a,b\in J\}\sseq \mathrm{End}_\C(J)$, and 
$
\gJJ(1):=\spn_\C\{ P,[L_a,P]\,:\,a\in J\}\sseq\mathrm{Hom}_\C(\sS^2(J),J),
$ 
where $P:\sS^2(J)\to J$ is the map
$P(x,y):=xy$, and  $
[L_a,P](x,y):=a(xy)-(ax)y-(-1)^{|x||y|}(ay)x
$.
The Lie superbracket of $\gJJ$ is defined by the following relations.
\begin{itemize}
\item[(i)] $[A,a]:=A(a)$ for $A\in \gJJ(0)$ and $a\in \gJJ(-1)$.
\item[(ii)] $[A,a](x):=A(a,x)$ for $A\in\gJJ(1)$, $a\in \gJJ(-1)$, and $x\in J$. 
\item[(iii)] $[A,B](x,y):=A(B(x,y))-(-1)^{|A||B|}B(A(x),y)-(-1)^{|A||B|+|x||y|}B(A(y),x)$ for $A\in \gJJ(0)$, $B\in\gJJ(1)$, and $x,y\in J$.

\end{itemize}
 For the classification of finite dimensional  complex simple Jordan superalgebras and their corresponding TKK Lie superalgebras, see the articles by Kac \cite{KacJ} and
Cantarini--Kac  \cite{CanKac}.

If $J$ has a unit  $1_J\in J$, then the elements $e:=1_J$, $f:=-2P$, and $h:=2L_{1_J}$ of $\gJJ$ satisfy 
\eqref{Eqhefrel}.
It follows that 
$\g s:=\spn_\C\{e,f,h\}$ is a subalgebra of $\gJJ$ isomorphic to $\g{sl}_2(\C)$.
Indeed $\g s$ is  a \emph{short subalgebra} of $\gJJ$ (see \cite{CanKac}). We recall the definition of a short subalgebra.
\begin{dfn}
\label{shortsubalg}
Let $\g l$ be a complex Lie superalgebra. A 
\emph{short subalgebra}  of $\g l$ is a Lie subalgebra $\g a\sseq \g l_\eev$ that is isomorphic to $\g{sl}_2(\C)$, with  a basis $e,f,h$ that satisfies the relations
\eqref{Eqhefrel}, 
such that the eigenspace decomposition of $\mathrm{ad}\left(-\frac{1}{2}h\right)$ defines a short grading of $\g l$.
\end{dfn}

\begin{rmk}
\label{remonA(mn)case}
Let $\g l$ be a complex Lie superalgebra and let $\g a\sseq\g l_\eev$ be a  short subalgebra of $\g l$. 
\begin{itemize}
\item[(a)]
Assume that $\g l$ is a subalgebra of another Lie superalgebra $\tilde{\g l}$ such that 
 $\dim\tilde{\g l}=\dim\g l+1$. 
Since every finite dimensional $\g{sl}_2(\C)$-module is completely reducible, it follows that $\tilde{\g l}\cong \g l\oplus \C$ as $\g a$-modules, so that  $\g a$ is a short subalgebra of $\tilde{\g l}$ as well.  

\item[(b)] Every  central extension 
$0\to \C\to \hat{\g l}\to \g l\to 0$
splits on $\g a$.  An argument similar to part (a) implies that the image of $\g a$ under the splitting section is a short subalgebra of $\hat{\g l}$.

\end{itemize}

\end{rmk}
When $J$ is isomorphic to $\mathit{gl}(m,n)_+$, $\mathit{p}(n)_+$, or $\mathit{q}(n)_+$, 
it will be more convenient for us 
to replace $\gJJ$ by a non-simple Lie superalgebra
which has a more natural matrix realization (see also Remark \ref{rmk-reasongflat}). To this end, we define the Lie superalgebra $\gflat$ by
\begin{equation}
\label{EqdfgJtild}
\gflat:=
\begin{cases}
\g{gl}(2m|2n)&\text{ if }
J\cong\mathit{gl}(m,n)_+,\\
\g{p}(2n)&\text{ if }J\cong\mathit{p}(n)_+,\\
\g{q}(2n)&\text{ if }J\cong\mathit{q}(n)_+,\\
\gJJ&\text{ otherwise.}
\end{cases}
\end{equation} 
 For a precise description of $\g{p}(2n)$ and $\g{q}(2n)$
see Appendix \ref{appendix-classical}.
From Remark \ref{remonA(mn)case} it follows that the short subalgebra $\g s$ of $\gJJ$ corresponds to a unique
short subalgebra of  $\gflat$.  
We use the same symbols $\g s$, $e$, $f$, and $h$ for denoting the short subalgebra of $\gflat$ and its corresponding basis.

By restriction of the adjoint representation, $\gflat$ is equipped with an
 $\g s$-module structure. This $\g s$-module is a direct sum of trivial and adjoint representations of $\g s$, hence it integrates to a representation of the adjoint group $\mathrm{PSL}_2(\C)$. 
Furthermore, \begin{equation}
\label{involw}
w:=\exp(\ad({f}))
\exp(-\ad({e}))
\exp(\ad({f}))
\end{equation}
represents the nontrivial element of the Weyl group of $\mathrm{PSL}_2(\C)$.

Set
 $\gflat(t):=\{x\in\gflat\, :\, [h,x]=-2tx\}$ for $t\in\{0,\pm 1\}$. 
The Lie superalgebra  $\gflat(0)$ naturally acts on $\gflat(-1)\cong J$. Set
\begin{equation*}
\g g:=\gflat(0)
\text{ and }
V:=\gflat(-1)^*:=\Hom_\C(V,\C^{1|0}).
\end{equation*}
Thus the $\g g$-module $V$ is the dual of the $\g g$-module $J$.

\begin{rmk}
\label{rmk-reasongflat}
The reason for replacing $\gJJ$ by $\gflat$ is to obtain a convenient way of associating partitions to the irreducible $\g g$-modules that occur in $\sP(V)$. For example, assume that $J\cong\mathit{gl}(m,n)_+$, where
$\mathit{gl}(m,n)_+$ denotes the Jordan superalgebra of $(m+n)\times (m+n)$ matrices in $(m,n)$-block form. Then $\gJJ\cong \g{sl}(2m|2n)$ if $m\neq n$, and    
$\gJJ\cong \g{psl}(2m|2n)$ if $m=n$. In both cases, $\gJJ(0)$ is closely related to $\gl(m|n)\oplus \gl(m|n)$, but it is \emph{not} isomorphic to it. However, $\g g:= \gflat(0)\cong \gl(m|n)\oplus \gl(m|n)$, and the irreducible summands of $\sP(V)\cong\sP((\C^{m|n})^*\otimes \C^{m|n})$
 are naturally parametrized by  $(m,n)$-hook partitions.
\end{rmk}

\section{Classical Lie superalgebras}
\label{appendix-classical}
In this Appendix we give explicit realizations of classical Lie superalgebras $\g{gl}(m|n)$, 
$\g{gosp}(m|2n)$, $\g p(n)$, and $\g q(n)$. We describe root systems of 
$\g{gl}(m|n)$, 
$\g{gosp}(m|2n)$,  and $\g q(n)$, and choose Borel subalgebras in these Lie superalgebras.
\subsection{The Lie superalgebra $\gl(m|n)$} 
\label{subsec-gl}
Let $m,n\geq 1$ be integers. We use the usual realization of $\gl(m|n)$ as 
$(m+n)\times (m+n)$ matrices in $(m,n)$-block form 
\begin{equation}
\label{bldiaABCD}
\begin{bmatrix}
A& B\\
C& D
\end{bmatrix}
\end{equation}
where $A$ is $m\times m$ and $D$ is $n\times n$. 
The diagonal Cartan subalgebra of  $\gl(m|n)$ is
\begin{equation}
\label{eq:CSAhm|n}
\g h_{m|n}:=\left\{\mathrm{diag}(\mathbf s,\mathbf t)\,:\,\mathbf{s}:=(s_1,\ldots, s_m)\in\C^m\text{ and }\mathbf{t}:=(t_1,\ldots,t_n)\in\C^n\right\}.
\end{equation}
The standard characters $\eps_i,\delta_j:\g h_{m|n}\to \C$ are  
defined by 
\[
\eps_i(\mathrm{diag}(\mathbf s,\mathbf t)):=s_i\text{ for }1\leq i\leq m\ \text{ and }\  
\delta_j(\mathrm{diag}(\mathbf s,\mathbf t)):=t_j\text{ for }1\leq j\leq n.
\]
We define $\g b^{\mathrm{st}}_{m|n}$ (respectively, $\g b^{\mathrm{op}}_{m|n}$)
to be the Borel subalgebras of 
$\gl(m|n)$ corresponding to the fundamental systems
$\mathit{\Pi}^{\mathrm{st}}$ (respectively, $\mathit{\Pi}^{\mathrm{op}}$), where
\[
\mathit{\Pi}^{\mathrm{st}}:=\left\{\eps_i-\eps_{i+1}\right\}_{i=1}^{m-1}\cup\left\{\eps_m-\delta_1\right\}
\cup\left\{\delta_j-\delta_{j+1}\right\}_{j=1}^{n-1}
\text{ and }
\mathit{\Pi}^{\mathrm{op}}:=-\mathit{\Pi}^{\mathrm{st}}
\]
For every partition $\lambda\in \EuScript{H}(m,n)$, we set 
\begin{equation}
\label{eq:lammn-stt}
\lambda^{\mathrm{st}}_{m|n}:=
\sum_{i=1}^m\lambda_i\eps_i+
\sum_{j=1}^n\lag\lambda_j'-m\rag\delta_j.
\end{equation}

In the spacial case $m=n$, 
we define $\g b_{n|n}^{\mathrm{mx}}
$
to be the Borel subalgebra corresponding to the fundamental system
\[
\mathit{\Pi}^{\mathrm{mx}}
:=\left\{
\delta_i-\eps_i
\right\}_{i=1}^n\cup
\left\{\eps_{j}-\delta_{j-1}
\right\}_{j=2}^{n}.
\]

\subsection{The Lie superalgebra $\g{gosp}(m|2n)$}
\label{subsec-osp}
Let $m,n\geq 1$ be integers.
We begin with an explicit realization of $\g{osp}(m|2n)$.  
Set $r:=\lfloor\frac{m}{2}\rfloor$.
Let $J^+$ be the $m\times m$ matrix defined by
\[
J^+:=\begin{bmatrix}
1& 0_{1\times r} & 0_{1\times r}\\
0_{r\times 1} & 0_{r\times r} & I_{r\times r}\\
0_{r\times 1}& I_{r\times r} & 0_{r\times r}
\end{bmatrix}
\text{ if $m=2r+1$, and }
J^+:=\begin{bmatrix}
0_{r\times r} & I_{r\times r}\\
I_{r\times 1}& 0_{r\times r} 
\end{bmatrix}
\text{ if $m=2r$,}
\]
Also, let $J^-$ be the $2n\times 2n$ matrix defined by
\[
J^-:=\begin{bmatrix}
0_{n\times n}& I_{n\times n}\\
-I_{n\times n} & 0_{n\times n}
\end{bmatrix}.
\]
Let $\{\sfe_i\}_{i=1}^m\cup\{\sfe'_j\}_{j=1}^{2n}$ be the standard homogeneous basis of $\C^{m|2n}$, and let
$
\mathsf{B}:\C^{m|2n}\times \C^{m|2n}\to\C 
$
be the even supersymmetric bilinear form
defined by 
\[
\mathsf{B}(\sfe_i,\sfe_j)=J^+_{i,j},\
\mathsf{B}(\sfe_i',\sfe_j')=J^-_{i,j},
\text{ and }
\mathsf{B}(\sfe_i,\sfe_j')=0.
\]
We realize the Lie superalgebra $\g{osp}(m|2n)$ as the  subalgebra of $\gl(m|2n)$ that leaves the bilinear form 
$\mathsf B:\C^{m|2n}\times \C^{m|2n}\to\C$ invariant. For $\mathbf s\in\C^r$ and $\mathbf t\in\C^n$, set
\[
\mathsf d(\mathbf s,\mathbf t):=
\begin{cases}
\mathrm{diag}(\mathbf s,-\mathbf s,\mathbf t,-\mathbf t)&\text{ if }m=2r,\\
\mathrm{diag}(0,\mathbf s,-\mathbf s,\mathbf t,-\mathbf t)&\text{ if }m=2r+1.
\end{cases}
\]
Recall from Appendix \ref{subsec-gl}
that we denote the standard Cartan subalgebra of $\gl(m|2n)$ by $\g h_{m|2n}$. Then  $\oline{\g h}_{m|2n}:=\g h_{m|2n}\cap\g{osp}(m|2n)$ 
is a Cartan subalgebra of $\g{osp}(m|2n)$.
We have 
\[
\oline{\g h}_{m|2n}=\left\{\mathsf d(\mathbf s,\mathbf t)\,:\,\mathbf s\in\C^r\text{ and }\mathbf t\in\C^n\right\},
\] and 
the standard characters
of $\oline{\g h}_{m|2n}$ 
are given by 
\[
\eps_i(\mathsf d(\mathbf s,\mathbf t)):=s_i
\text{ for }1\leq i\leq r
\text{ and }
\delta_j(\mathsf d(\mathbf s,\mathbf t)):=t_j
\text{ for }1\leq j\leq n.
\] 
Let $\breve{\g b}_{m|2n}\sseq\g{osp}(m|2n)$ be the Borel subalgebra corresponding to the  fundamental system $\mathit{\Pi}$, where
\[
\mathit{\Pi}
:=
\begin{cases}
\left\{
\eps_i-\eps_{i+1}
\right\}_{i=1}^{r-1}
\cup\left\{\eps_r-\delta_1\right\}
\cup\left\{
\delta_j-\delta_{j+1}
\right\}_{j=1}^{n-1}
\cup\left\{
\delta_n
\right\}&\text{ if }m=2r+1,\\[2mm]
\left\{
\eps_i-\eps_{i+1}
\right\}_{i=1}^{r-1}
\cup
\left\{\eps_r-\delta_1\right\}
\cup
\left\{
\delta_j-\delta_{j+1}
\right\}_{j=1}^{n-1}
\cup\left\{
2\delta_n
\right\}&\text{ if }m=2r.
\end{cases}\]
Finally, we set $\g{gosp}(m|2n):=\g{osp}(m|2n)\oplus\C I\sseq\gl(m|2n)$, where $I:=I_{(m+2n)\times (m+2n)}$. 
We also set
\begin{equation}
\label{bm|2ngosp}
\g b_{m|2n}:=\breve{\g b}_{m|2n}\oplus\C I.
\end{equation}
We extend the standard characters 
$\eps_i,\delta_j$
of $\oline{\g h}_{m|2n}$ 
to 
the subalgebra 
$\tilde{\g h}_{m|2n}:=\g h_{m|2n}\cap\g{gosp}(m|2n)$
of diagonal matrices in $\g{gosp}(m|2n)$,
by setting $\eps_i(I)=\delta_j(I)=0$.
Let $\zeta:\tilde{\g h}_{m|2n}\to\C$ be the linear functional defined uniquely by \[
\zeta\big|_{\oline{\g h}_{m|2n}}=0\text{ and }\zeta(I)=1.
\]
The set $\{\eps_i\}_{i=1}^r\cup\{\delta_j\}_{j=1}^n\cup \{\zeta\}$ is a basis for the dual of $\tilde{\g h}_{m|2n}$. 
We remark that when  $\g g$ is of type $\g{gosp}$ (i.e., in Cases III and V), 
we have $\zeta(h)=2$ where 
$h\in\gflat$ is   defined as in \eqref{Eqhefrel}.

In Case V, where $\g g=\g{gosp}(2|4)$, we need to consider an exceptional Borel subalgebra 
\begin{equation}
\label{b2|4ex}
\g b_{2|4}^\mathrm{ex}:=
\hat{\g b}^{}_{2|4}\oplus \C I,
\end{equation} where 
$\hat{\g b}^{}_{2|4}$
is the Borel subalgebra of $\g{osp}(2|4)$  corresponding  to the 
fundamental system 
\[
\mathit{\Pi}^{\mathrm{ex}}:=
\{-\eps_1-\delta_1,\delta_1-\delta_2,2\delta_2\}.
\]

\subsection{The anisotropic embedding of $\g{osp}(m|2n)$ in $\gl(m|2n)$}
\label{composp}
We will need another realization of $\g{osp}(m|2n)$ inside $\gl(m|2n)$ which will be used in the description of the spherical subalgebra $\g k$. 
Set $r:=\lfloor\frac{m}{2}\rfloor$, and let $\tilde{J}^-$ be 
the $2n\times 2n$ matrix defined by
\[
\tilde{J}^-:=
\mathrm{diag}
(\underbrace{\tilde{J},\ldots,\tilde{J}}_{n\text{ times}})
\text{ where }
\tilde{J}:=\begin{bmatrix}
0 & 1\\ -1&0\end{bmatrix}
.
\]
Let $\tilde{\mathsf{B}}:\C^{m|2n}\times \C^{m|2n}\to \C$ be the even supersymmetric bilinear form which is given
in the standard basis $\{\sfe_i\}_{i=1}^m\cup
\{\sfe_j\}_{j=1}^{2n}$ of $\C^{m|2n}$ by
\[
\tilde{\mathsf{B}}(\sfe_i,\sfe_j):=\delta_{i,j},\
\tilde{\mathsf{B}}(\sfe'_i,\sfe'_j):=\tilde{J}^-_{i,j},
\text{ and }
\tilde{\mathsf{B}}(\sfe_i,\sfe_j'):=0.
\]
Thus, the matrix of $\tilde{\mathsf{B}}(\cdot,\cdot)$ in the standard basis
of $\C^{m|2n}$ is
\[\begin{bmatrix}I_{m\times m}& 0_{m\times 2n}\\0_{2n\times m}& \tilde{J}^-\end{bmatrix}.\]
The subalgebra of $\gl(m|2n)$ that leaves the bilinear 
form  $\tilde{\mathsf{B}}$ invariant 
is isomorphic to $\g{osp}(m|2n)$. 

\subsection{The exceptional  embedding of $\g{osp}(1|2)\oplus\g{osp}(1|2)$ in $\g{gosp}(2|4)$}
\label{appBexk}
We consider the realization of  $\g{osp}(2|4)$ given in 
Appendix  \ref{subsec-osp}. 
Set\[
g:=\begin{bmatrix}
0 & \sqrt{-1} & 0 & 0 & 0 & 0\\
-\sqrt{-1} & 0 & 0 & 0 & 0 & 0\\
0 & 0 & 0 & 0 & 0 & -\sqrt{-1}\\
0 & 0 & 0 & 0 & \sqrt{-1} & 0\\
0 & 0 & 0 &  -\sqrt{-1} & 0 & 0\\
0 & 0 &   \sqrt{-1} & 0 & 0 & 0
\end{bmatrix}.
\]
We
set $\g k^\mathrm{ex}$ to be the subalgebra of fixed points of the map $\g{osp}(2|4)\to\g{osp}(2|4)$ given by  $x\mapsto \Ad_g(x)$.
One can verify that $\g k^\mathrm{ex}\cong\g{osp}(1|2)\oplus\g{osp}(1|2)$. 
We will consider 
$\g k^\mathrm{ex}$ as a subalgebra 
of $\g{gosp}(2|4)$.

\subsection{The Lie superalgebra $\g p(n)$}
\label{subsec-p}
Let $n\geq 1$ be an integer, and let 
$\check{\mathsf{B}}:\C^{n|n}\times \C^{n|n}\to\C$ be the odd supersymmetric bilinear form defined by  
\[
\check{\mathsf{B}}(\sfe_i,\sfe_j'):=\delta_{i,j},\
\check{\mathsf{B}}(\sfe_i,\sfe_j):=0,
\text{ and }
\check{\mathsf{B}}(\sfe'_i,\sfe'_j):=0,
\]
where $\{\sfe_i\}_{i=1}^n
\cup\{\sfe_i'\}_{i=1}^n$ is the standard homogeneous basis of $\C^{n|n}$. 
The Lie superalgebra $\g p(n)$ is the subalgebra of $\gl(n|n)$ that leaves $\check{\mathsf{B}}(\cdot,\cdot)$
invariant.  It consists of matrices in $(n,n)$-block form
\[
\begin{bmatrix}
A&B\\ C & -A^T
\end{bmatrix},\text{ where }B=B^T\text{ and }C=-C^T.
\]
In this paper we will not need a description of the root system and highest weight modules of $\g p(n)$.

\subsection{The Lie superalgebra $\g q(n)$}
\label{subsec-q}
Let $n\geq 1$ be an integer. 
The Lie superalgebra $\g q(n)$ is the subalgebra of $\gl(n|n)$ that consists of matrices in $(n,n)$-block form
\[
\begin{bmatrix}
A&B\\ B & A
\end{bmatrix}.
\]
Let $\g h$ be the subalgebra of matrices of the latter form where $A$ and $B$ are diagonal. Then $\g h$ is a Cartan subalgebra of $\g q(n)$.  The standard characters $\left\{\eps_i\right\}_{i=1}^n$ of $\g h_\eev$ are the restrictions of the corresponding standard characters of $\gl(n|n)$.  Let $\g b^{\mathrm{st}}_n$ (respectively, $\g b^{\mathrm{op}}_n$) be the Borel subalgebra of 
$\g{q}(n)$ associated to the fundamental system
$
\mathit{\Pi}^{\mathrm{st}}:=
\{\eps_i-\eps_{i+1}\}_{i=1}^{n-1}$ (respectively, 
$
\mathit{\Pi}^{\mathrm{op}}:=
\{\eps_{i+1}-\eps_{i}\}_{i=1}^{n-1}$).
For every partition 
$\lambda\in\EuScript{DP}(n)$, we set
$\lambda^{\mathrm{st}}_n:=\sum_{i=1}^n\lambda_i\eps_i$.


\section{Facts from supergeometry}
\label{AppndxC}

All of the supermanifolds that are considered in this appendix are complex analytic. We denote the underlying complex manifold of a supermanifold $\mathcal{X}$ by $|\mathcal{X}|$, and the sheaf of superfunctions on $\mathcal{X}$ by $\ccO_{\mathcal X}$. 
Morphisms of supermanfolds are expressed as
$(\mathsf{f},\mathsf{f}^\#):\mathcal X\to\mathcal Y$,
where $\mathsf{f}:|\mathcal X|\to|\mathcal Y|$ is the complex analytic map between the underlying spaces and 
$\mathsf{f}^\#:\ccO_{\mathcal Y}\to 
\mathsf{f}_*\ccO_{\mathcal X}$ is the associated morphism of sheaves of superalgebras.

Let $\mathcal L$ be a connected Lie supergroup and let $\mathcal M$ be a Lie subsupergroup of $\mathcal L$. Set $\g l:=\Lie(\mathcal L)$ and $\g m:=\Lie(\mathcal M)$. 
The right action of $\mathcal L$ on $\mathcal L$ induces
 a canonical isomorphism of superalgebras from  $\bfU(\g l)$ onto the algebra of  left invariant holomorphic differential operators on $\mathcal L$. Under this isomorphism elements of  $\bfU(\g l)^\mathcal M$, the subalgebra of $\mathcal M$-invariants in $\bfU(\g l)$, are mapped to holomorphic differential operators which are left $\mathcal L$-invariant and  right $\mathcal M$-invariant. The latter differential operators 
induce $\mathcal L$-invariant differential operators on the homogeneous space $\mathcal L/\mathcal M$. Consequently, we obtain a 
homomorphism of superalgebras
\begin{equation}
\label{eq:UgtoDG/K}
\Psi_{\mathcal L,\mathcal M}:\bfU(\g l)^{\mathcal M}\to \mathscr{D}(\mathcal L/\mathcal M),
\end{equation}
where $\mathscr{D}(\mathcal L/\mathcal M)$ denotes the algebra of $\mathcal L$-invariant differential operators on $\mathcal L/\mathcal M$.
By a superization of the argument of  
\cite[Prop. 9.1]{MeinrenkenClif}, we obtain the following statement. 
\begin{prp}
\label{prp:UguK->DG/K}
Let $\mathscr D^{(d)}(\mathcal L/\mathcal M)$ denote the subspace of elements of 
$\mathscr D(\mathcal L/\mathcal M)$ of order at most $d$. Assume that there exists an $\mathcal M$-invariant complement of $\g m$ in $\g l$. Then 
\[
\Psi_{\mathcal L,\mathcal M}\left(\bfU^{(d)}(\g l)^\mathcal M\right)=
\mathscr D^{(d)}(\mathcal L/\mathcal M)
\text{ for every $d\geq 0$.}
\] 
\end{prp}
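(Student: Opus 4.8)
\textbf{Proof proposal for Proposition \ref{prp:UguK->DG/K}.}

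The plan is to prove the statement by a local-coordinate argument on $\mathcal{L}/\mathcal{M}$, using the hypothesis that $\g m$ admits an $\mathcal M$-invariant complement $\g p$ in $\g l$, i.e. $\g l = \g m \oplus \g p$ as $\mathcal M$-modules (under the adjoint action). The inclusion $\Psi_{\mathcal{L},\mathcal{M}}\big(\bfU^{(d)}(\g l)^{\mathcal M}\big) \sseq \mathscr D^{(d)}(\mathcal{L}/\mathcal{M})$ is immediate from the construction of $\Psi_{\mathcal L,\mathcal M}$ in \eqref{eq:UgtoDG/K}, since the order filtration is manifestly preserved; the content is the reverse inclusion. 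First I would set up a local model: the projection $\pi:\mathcal{L}\to\mathcal{L}/\mathcal{M}$ is an $\mathcal{M}$-principal bundle, and because $\g l=\g m\oplus\g p$ the exponential-type map built from $\g p$ gives a local section, so over a neighbourhood $\mathcal U$ of the base point $o=\pi(e)$ one has an isomorphism $\pi^{-1}(\mathcal U)\cong \mathcal U\times \mathcal M$ of supermanifolds, with $\mathcal U$ modelled on (an open subsupermanifold of) the affine superspace associated to $\g p$. In these coordinates a left-$\mathcal L$-invariant differential operator $D$ on $\mathcal L/\mathcal M$ is determined by its value at $o$, which is a differential operator on $\mathcal U$ at $o$, i.e. an element of $\mathcal O_{\mathcal U,o}\otimes \sS(\g p)$ evaluated appropriately — more precisely, invariance under the $\mathcal L$-action forces $D$ to be recoverable from the "symbol data at $o$," a finite-dimensional piece of information living in $\sS^{\leq d}(\g p)$ when $D$ has order $\leq d$.

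The key step is then a symbol-counting / surjectivity-of-symbol-map argument. Consider the symbol map $\sigma:\mathscr D^{(d)}(\mathcal L/\mathcal M)\to \big(\sS^{\leq d}(\g l/\g m)\big)^{\mathcal M} = \big(\sS^{\leq d}(\g p)\big)^{\mathcal M}$ obtained by taking the principal symbol at $o$ (using $\g l/\g m\cong\g p$). On the other hand, the PBW filtration on $\bfU(\g l)$ induces, on the $\mathcal M$-invariants, a map $\bfU^{(d)}(\g l)^{\mathcal M}\to \big(\sS^{\leq d}(\g l)\big)^{\mathcal M}$, and composing with the projection $\sS(\g l)\to\sS(\g l/\g m)=\sS(\g p)$ coming from $\g l=\g m\oplus\g p$ gives a map to $\big(\sS^{\leq d}(\g p)\big)^{\mathcal M}$. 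One checks these are compatible: $\sigma\circ\Psi_{\mathcal L,\mathcal M}$ equals the composite just described. Now I would argue that the composite $\bfU^{(d)}(\g l)^{\mathcal M}\to \big(\sS^{\leq d}(\g p)\big)^{\mathcal M}$ is surjective — this is where the $\mathcal M$-invariant splitting $\g l=\g m\oplus\g p$ is essential, because it lets one lift any $\mathcal M$-invariant element of $\sS(\g p)$ first to an $\mathcal M$-invariant element of $\sS(\g l)$ (choose the copy of $\sS(\g p)$ inside $\sS(\g l)$, which is an $\mathcal M$-submodule and hence its invariants surject onto $\sS(\g p)^{\mathcal M}$ precisely because the complement splits off) and then, via PBW symmetrization $\sS(\g l)\to\bfU(\g l)$, to an element of $\bfU(\g l)$; symmetrization is $\mathcal L$-equivariant hence carries $\sS(\g l)^{\mathcal M}$ into $\bfU(\g l)^{\mathcal M}$ and preserves the order $\leq d$. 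Combined with the fact that the symbol map $\sigma$ on $\mathscr D(\mathcal L/\mathcal M)$ has image exactly $\big(\sS(\g p)\big)^{\mathcal M}$ (invariant operators are classified by their invariant symbols, again using the local triviality and a standard induction on order), a downward induction on the order $d$ finishes the argument: given $D\in\mathscr D^{(d)}(\mathcal L/\mathcal M)$, pick $X\in\bfU^{(d)}(\g l)^{\mathcal M}$ with $\sigma(\Psi_{\mathcal L,\mathcal M}(X))=\sigma(D)$, so $D-\Psi_{\mathcal L,\mathcal M}(X)\in\mathscr D^{(d-1)}(\mathcal L/\mathcal M)$, and conclude by the inductive hypothesis.

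I would model the whole computation on the cited reference \cite[Prop. 9.1]{MeinrenkenClif}, which handles the classical (non-super) case; the point of the present statement is just that every step survives the "superization." The main obstacle I anticipate is bookkeeping in the super setting: one must be careful that the PBW/symmetrization isomorphism $\sS(\g l)\cong \gr\bfU(\g l)$ is taken in the $\Z/2$-graded sense, that the symbol map and the local coordinates on $\mathcal U$ respect parities and signs (Koszul signs in reordering derivations), and that "differential operator of order $\leq d$" on a supermanifold is defined via the correct filtration of $\mathscr D(\mathcal L/\mathcal M)$ as a sheaf-theoretic object; none of these is deep, but each requires care. A secondary subtlety is that $|\mathcal L/\mathcal M|$ need not be nice globally, so one should either work locally near $o$ and then invoke homogeneity to propagate, or note that an $\mathcal L$-invariant operator vanishing near $o$ vanishes everywhere. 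Apart from these routine-but-delicate points, the argument is a direct transcription of the classical proof, so I would keep the write-up brief and refer to \cite{MeinrenkenClif} for the details that are identical.
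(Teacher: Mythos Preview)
Your proposal is correct and matches the paper's approach exactly: the paper does not give an independent proof but simply states that the result follows by a superization of the argument of \cite[Prop.~9.1]{MeinrenkenClif}, which is precisely the symbol/PBW/induction-on-order argument you have outlined. Your write-up in fact supplies more detail than the paper does.
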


In the rest of this appendix  we will assume that $J$ is a Jordan superalgebra of type $\mathsf{A}$. Let $\g g$, $\g k$, and $V$ be as in Section 
\ref{sec:introduction}.

\begin{lem}
There is a vector $v_\g k\in V_\eev$ such that $\g k=\mathrm{stab}_\g g(v_\g k)$. 
\end{lem}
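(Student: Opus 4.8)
The plan is to exhibit an explicit such vector and then verify the stabilizer computation case by case, using the matrix realizations of $\g g$, $\g k$, and $V$ established in Section \ref{sec-BorelKemd} and summarized in Table \ref{tbl-1}. Recall that $V=\gflat(-1)^*$ and that $e=1_J\in\gflat(-1)$ is the identity of the Jordan superalgebra, which is an even element. The natural candidate is $v_\g k:=e^*$, the element of $V=\gflat(-1)^*$ dual to $e$ with respect to some fixed homogeneous basis, or more intrinsically the functional determined by the short grading. Since $e$ is even, $v_\g k\in V_\eev$, so the parity requirement is automatic. The content of the lemma is the identity $\g k=\mathrm{stab}_\g g(v_\g k)$.

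First I would recall that $\g k=\mathrm{stab}_\g g(e)$ by definition (from the introduction, $\g k=\mathrm{stab}_{\g g}(e)$ where $e\in\gflat(-1)\cong J$), and that $\g k=\g g^\Theta$ where $\Theta=\Ad_w$ for $w$ representing the nontrivial Weyl group element of the embedded $\mathrm{PSL}_2(\C)$. The key observation is that the $\g g$-action on $J=\gflat(-1)$ and on $V=\gflat(-1)^*$ are dual, so for $X\in\g g$ one has $X\cdot v_\g k=0$ in $V$ if and only if $\lag v_\g k, X\cdot u\rag=0$ for all $u\in J$, i.e. if and only if $X\cdot u$ has zero $e$-component for all $u$. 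One then checks, using the explicit bracket relations of the TKK construction (Appendix \ref{Sec-TKKconst}: $[A,a]=A(a)$ for $A\in\gflat(0)$, $a\in\gflat(-1)$) together with $h=2L_{1_J}$ and the grading by $\ad(-\tfrac12 h)$, that the stabilizer of $v_\g k$ in $\g g$ coincides with the stabilizer of $e$. This is essentially the statement that the bilinear pairing between $\g g\cdot e\sseq J$ and the line $\C v_\g k\sseq V$ is nondegenerate in the appropriate sense; concretely, $X$ kills $v_\g k$ iff $X$ kills $e$, because $e$ generates $J$ as a module over the multiplication algebra and the $\g s$-triple normalizes everything compatibly.

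For a completely elementary verification I would instead go through Cases I, II, III (and, if one wishes to be thorough even outside type $\mathsf{A}$, the remaining cases) using Table \ref{tbl-1}. In Case I, $V\cong\C^{m|n}\otimes(\C^{m|n})^*$ with $\g g\cong\gl(m|n)\oplus\gl(m|n)$ acting by left/right multiplication, $\g k\cong\gl(m|n)$ embedded diagonally, and $v_\g k$ should be taken to be the identity matrix $I_{m|n}$, whose stabilizer under $(X,Y)\mapsto XI-IY=X-Y$ is precisely the diagonal $\{(X,X)\}=\g k$; note $I_{m|n}\in V_\eev$. In Case II, $V\cong\sS^2(\C^{m|2n})$ with $\g g\cong\gl(m|2n)$ acting naturally and $\g k\cong\g{osp}(m|2n)$; here $v_\g k$ is the element corresponding to the nondegenerate supersymmetric form $\tilde{\mathsf B}$ from Appendix \ref{composp}, and $\mathrm{stab}_{\gl(m|2n)}$ of a nondegenerate even symmetric form is exactly the orthosymplectic subalgebra, which is $\g k$; nondegeneracy of $\tilde{\mathsf B}$ forces $v_\g k\in V_\eev$. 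In Case III, $V\cong(\C^{m+1|2n})^*$ with $\g g\cong\g{gosp}(m+1|2n)$ and $\g k=\mathrm{Stab}_\g g(\sfe_1-\sfe_{k+1})$ (or $\sfe_2-\sfe_{k+2}$) already presented as a point stabilizer, so $v_\g k$ is the linear functional dual to that vector; since that vector is even, so is $v_\g k$, and the stabilizer of a functional equals the stabilizer of its kernel-defining vector up to the one-dimensional scaling which is absorbed into the $\g{gosp}$ centre---one checks the scaling actually lands in $\g k$. The main obstacle is Case III (and the analogous bookkeeping in Cases IV, V), where $\g g$ is of type $\g{gosp}$ rather than $\g{gl}$ or $\g{osp}$: the extra central summand means the stabilizer of a nonzero vector $v$ in the standard module is not all of $\g{gosp}$ but a codimension-one subalgebra, and one must confirm that this subalgebra is exactly $\g k$ rather than some conjugate or some index-related variant. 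I would resolve this by comparing the explicit root-space descriptions of $\g k$ in Section \ref{sec-BorelKemd} with the root spaces annihilating $v_\g k$, which is the routine-but-careful computation at the heart of the proof.
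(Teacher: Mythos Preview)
Your case-by-case verification is correct and more explicit than the paper's argument. The paper gives a one-line proof: in every type~$\mathsf{A}$ case $V\cong V^*\cong J$ as $\g k$-modules, so one takes $v_\g k$ to be the element of $V_\eev$ corresponding to $1_J$. This is terse but can be unpacked uniformly: since $\gflat$ carries a nondegenerate invariant even form in each type~$\mathsf{A}$ case, the pairing $\gflat(-1)\times\gflat(1)\to\C$ gives a $\g g$-module isomorphism $V=\gflat(-1)^*\cong\gflat(1)$, and $\Theta=\Ad_w$ sends $e\mapsto -f\in\gflat(1)$ while fixing $\g k$ pointwise, so $\mathrm{stab}_\g g(f)=\Theta(\mathrm{stab}_\g g(e))=\g k$ and one lets $v_\g k$ correspond to $f$. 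This avoids the $\g{gosp}$-centre bookkeeping in Cases III--V that you (rightly) flag as the main obstacle.

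Your attempted general argument via $v_\g k=e^*$ has a gap, however. Stabilizing $e^*$ means that for every $u\in J$ the element $X\cdot u$ has zero $e$-component, whereas stabilizing $e$ means $X\cdot e=0$; these are different conditions, and the justification you offer (``$e$ generates $J$ as a module over the multiplication algebra and the $\g s$-triple normalizes everything compatibly'') does not connect them. That the two stabilizers coincide in the concrete cases is ultimately because of the invariant-form identification above, not the TKK bracket relations you invoke. Your fallback to explicit case analysis is the sound route, and it agrees in spirit with the paper's (telegraphic) proof.
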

\begin{proof}
In all of the cases where $J$ is of type $\mathsf{A}$ we have 
$V\cong V^*\cong J$ as $\g k$-modules, hence we can set $v_\g k$ equal to the element of $V_\eev$ corresponding to $1_J\in J$.  
\end{proof}

\begin{lem}
\label{gesurjj}
The map $\g g\to V$, $x\mapsto x\cdot v_{\g k}$ is surjective.
\end{lem}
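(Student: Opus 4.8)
The plan is to deduce surjectivity of $\phi\colon\g g\to V$, $x\mapsto x\cdot v_{\g k}$, from a dimension count, using the preceding lemma together with $\g{sl}_2(\C)$-representation theory for the short subalgebra $\g s=\spn_\C\{h,e,f\}$.

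First I would observe that $\phi$ is $\C$-linear and even (as $v_{\g k}\in V_\eev$), and that its kernel is exactly $\mathrm{stab}_{\g g}(v_{\g k})=\g k$ by the preceding lemma. Hence $\phi$ is surjective as soon as $\dim\g g_\eev-\dim\g k_\eev=\dim V_\eev$ and $\dim\g g_\ood-\dim\g k_\ood=\dim V_\ood$; since $V=\gflat(-1)^*$, this amounts to the $\Z/2$-graded identity $\dim\g g-\dim\g k=\dim\gflat(-1)$.

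To establish this identity I would instead analyse the map $\ad(e)\colon\gflat(0)\to\gflat(-1)$, $x\mapsto[e,x]=-x\cdot e$, whose kernel is likewise $\mathrm{stab}_{\g g}(e)=\g k$; it therefore suffices to prove that $\ad(e)|_{\gflat(0)}$ is surjective. Restricting the adjoint action to $\g s$ makes $\gflat$ a finite-dimensional $\g{sl}_2(\C)$-module, and since $\gflat(t)$ is the $(-2t)$-eigenspace of $\ad(h)$ for $t\in\{0,\pm1\}$, every $\ad(h)$-eigenvalue on $\gflat$ lies in $\{-2,0,2\}$. Thus the only simple summands that occur are the trivial module and the adjoint module $\g{sl}_2(\C)$, and---$\g s$ being even---the isotypic decomposition is compatible with the $\Z/2$-grading. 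Each trivial summand lies in the $\ad(h)$-weight-$0$ space $\gflat(0)$ and is annihilated by $\ad(e)$; each adjoint summand meets each of $\gflat(-1),\gflat(0),\gflat(1)$ in a line, and $\ad(e)$ maps its weight-$0$ line isomorphically onto its weight-$2$ line. Since $\gflat(-1)$ is precisely the $\ad(h)$-weight-$2$ space, $\ad(e)\colon\gflat(0)\to\gflat(-1)$ is surjective in each parity, giving $\dim\g g-\dim\g k=\dim\gflat(-1)=\dim V$ and, combined with the first step, the lemma.

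I expect no genuine obstacle; the only point needing attention is to run the dimension counts separately in the even and odd components, which is harmless because $\phi$, the map $\ad(e)$, the $\g s$-action, and hence the $\g{sl}_2(\C)$-decomposition are all parity-preserving.
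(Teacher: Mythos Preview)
Your proof is correct and takes a genuinely different route from the paper. The paper's argument is a one-line case-by-case check: it notes that the kernel is $\g k$ and then says the statement ``follows in all of the cases by verifying that the graded dimension of the image of this map and of $V$ are the same.'' You instead give a uniform, conceptual argument: you reduce to the dimension identity $\dim\g g-\dim\g k=\dim V$ and prove it by showing $\ad(e)\colon\gflat(0)\to\gflat(-1)$ is surjective via the $\g{sl}_2$-decomposition of $\gflat$ into trivial and adjoint summands (this being forced by the short grading). Your approach buys case-independence and actually proves more than needed here, since it works for any unital $J$, not just type~$\mathsf{A}$; the paper's approach is shorter to write but relies on the explicit tables. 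Both are valid; yours is more in the spirit of the TKK setup and could replace the paper's proof without loss.
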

\begin{proof}
The kernel of the linear map $x\mapsto x\cdot v_\g k$ is $\g k$. The statement now follows in all of the cases by verifying that the graded dimension of the image of this map and of $V$ are the same.
\end{proof}
Let $\tilde{\g b}:=\tilde{\g h}\oplus\tilde{\g n}$ be a Borel subalgebra of $\g g$ such that $\g g=\tilde{\g b}+\g k$.  
Let $\cG$ be a complex Lie supergroup such that $\Lie(\cG)=\g g$, and let $\cV$ be the complex affine superspace corresponding to $V$.  
We assume that $|\cG|$ is a connected Lie group, and that the action of $\g g$ on $V$ can be globalized to 
an action of $\cG$ on $\cV$.
The stabilizer of $v_\g k\in |\cV|= V_\eev$ is a complex Lie supergroup $(\cK,\ccO_\cK)$ such that  $\Lie(\cK)=\g k$.

\begin{prp}
\label{prp:orbitmapGK}
The orbit map of $v_\g k$ factors through an embedding 
$(\sfp_{v_\g k},\sfp_{v_\g k}^\#):\cG/\cK\into \cV$ whose image is an open subsupermanifold of $\cV$.
\end{prp}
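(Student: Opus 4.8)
The plan is to exhibit $\sfp_{v_\g k}$ as an étale morphism of supermanifolds whose underlying continuous map is injective; such a morphism is automatically an isomorphism onto an open subsupermanifold. First I would set up the factorization. Let $\mathsf o:\cG\to\cV$ denote the orbit morphism of $v_\g k$, that is, the composition of $\id_\cG\times v_\g k:\cG\to\cG\times\cV$ with the action morphism $\mu:\cG\times\cV\to\cV$. By definition of $\cK=\mathrm{stab}_\cG(v_\g k)$, the restriction of $\mu$ to $\cK\times\{v_\g k\}$ is the constant morphism at $v_\g k$, which is exactly the statement that $\mathsf o$ is invariant under the right $\cK$-action on $\cG$ (i.e. $\mathsf o\circ m=\mathsf o\circ\pr_1$ for the multiplication $m:\cG\times\cK\to\cG$). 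Since $\cG\to\cG/\cK$ is a $\cK$-principal bundle, $\mathsf o$ will factor uniquely through a morphism $\sfp_{v_\g k}:\cG/\cK\to\cV$ sending the base point $\bar e$ to $v_\g k$; this $\sfp_{v_\g k}$ is $\cG$-equivariant for the left translation action of $\cG$ on $\cG/\cK$ and the action $\mu$ on $\cV$.

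Next I would prove that $\sfp_{v_\g k}$ is a local isomorphism. By the inverse function theorem for supermanifolds it is enough to show that the tangent map of $\sfp_{v_\g k}$ at a single underlying point is an isomorphism of super vector spaces, and by $\cG$-equivariance together with transitivity of the $\cG$-action on $\cG/\cK$ it suffices to check this at $\bar e$. Under the identifications $T_{\bar e}(\cG/\cK)\cong\g g/\g k$ and $T_{v_\g k}\cV\cong V$, the tangent map at $\bar e$ is the linear map $\g g/\g k\to V$ induced by the infinitesimal action $x\mapsto x\cdot v_\g k$. The lemma preceding Lemma \ref{gesurjj} (that $\g k=\mathrm{stab}_\g g(v_\g k)$) identifies the kernel of $x\mapsto x\cdot v_\g k$ with $\g k$, so this map is injective on $\g g/\g k$, and Lemma \ref{gesurjj} says it is surjective. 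Hence $\g g/\g k\to V$ is a bijection, necessarily an isomorphism of $\Z/2$-graded vector spaces, and $\sfp_{v_\g k}$ is a local isomorphism. In particular the underlying map $|\sfp_{v_\g k}|:|\cG/\cK|\to|\cV|=V_\eev$ is open.

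It then remains to see that $|\sfp_{v_\g k}|$ is injective. Passing to underlying spaces gives $|\cK|=\mathrm{stab}_{|\cG|}(v_\g k)$ and $|\cG/\cK|=|\cG|/|\cK|$, so $|\sfp_{v_\g k}|$ is exactly the injection of the orbit $|\cG|\cdot v_\g k$ into $V_\eev$ obtained by dividing the orbit map by its stabilizer; injectivity is automatic. Thus $|\sfp_{v_\g k}|$ is a continuous open injection, hence a homeomorphism onto the open set $|\sfp_{v_\g k}|\big(|\cG/\cK|\big)\sseq|\cV|$, and a local isomorphism of supermanifolds with homeomorphic underlying map is an isomorphism onto the corresponding open subsupermanifold $\cU\sseq\cV$. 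This yields the desired embedding $(\sfp_{v_\g k},\sfp_{v_\g k}^\#):\cG/\cK\into\cV$ with open image.

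The main obstacle is not a deep point but the careful use of super-geometric foundations: one needs the facts that $\cG\to\cG/\cK$ is a $\cK$-principal bundle with the expected universal property (so that $\mathsf o$ descends), that the super inverse function theorem applies, and one needs to make the equivariance reduction from $\bar e$ to all points rigorous. Granting these, the entire content of the statement is already encoded in the two preceding lemmas — $\g k$ being the infinitesimal stabilizer and the surjectivity in Lemma \ref{gesurjj} — which together force the base-point tangent map $\g g/\g k\to V$ to be an isomorphism.
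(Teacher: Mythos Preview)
Your proof is correct and follows essentially the same approach as the paper: both reduce to showing that the differential of the orbit map is a bijection at every point, which follows from Lemma~\ref{gesurjj} and $\cG$-equivariance. Your version is more detailed, in particular you make explicit the injectivity of the underlying map $|\sfp_{v_\g k}|$ needed to upgrade the local isomorphism to an embedding, which the paper's one-line proof leaves implicit.
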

\begin{proof}
This follows from the fact that the differential of the orbit map 
$(\sfp_{v_\g k},\sfp_{v_\g k}^\#)$
is a bijection for all $g\in \cG$, which is a consequence of 
Lemma \ref{gesurjj}
and $\cG$-equivariance of $(\sfp_{v_\g k},\sfp_{v_\g k}^\#)$.
 \end{proof}

\begin{rmk}
\label{rmk:embedP(V)inGK}
Using the embedding $\cG/\cK\into \cV$ 
of
Proposition \ref{prp:orbitmapGK} and the
natural injection
$\sP(V)\into\ccO_{\cV}(|\cV|)$,
we obtain a $\cG$-equivariant embedding
\[
\sfp_{v_\g k}^\#(|\cV|)\big|_{\sP(V)}:
\sP(V)\into \ccO_{\cG/\cK}\left(|\cG/\cK |\right).
\]
Furthermore, connectedness of $|\cG|$ implies $\sPD(V)^\g g=\sPD(V)^\cG$. 
Therefore we can restrict every $D\in \sPD(V)^{\g g}$ to the open subsupermanifold $\cG/\cK$ of $\cV$, and  
indeed $D\big|_{\cG/\cK}\in\sD(\cG/\cK)$.
\end{rmk}

For the next proposition, recall that every linear functional $\varphi:\tilde{\g h}\to\C$ induces a natural homomorphism of $\C$-algebras 
$\sS\big(\tilde{\g h}\big)\to\C$ given by $x_1\cdots x_k\mapsto\varphi(x_1)\cdots\varphi(x_k)$
for $x_1,\ldots,x_k\in\tilde{\g h}$. 
Set $\sS^{(d)}(\tilde{\g h}):=\bigoplus_{i=0}^d\sS^i\big(\tilde{\g h}\big)$.
\begin{prp}
\label{prp-g=k+bdegofDlam}
Assume that $\sP(V)$ is a completely reducible and multiplicity-free $\g g$-module, and let $D\in\sPD^{(d)}(V)^{\g g}$. Then there exists an element $x_D\in\sS^{(d)}\big(\tilde{\g h}\big)$ such that for every irreducible $\g g$-module $W\sseq \sP(V)$, the action of $D$ on $W$ is by the scalar $\tilde\lambda(x_D)$, where $\tilde\lambda$ is the $\tilde{\g b}$-highest weight of $W$. 

\end{prp}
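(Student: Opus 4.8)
The plan is to realize $D$ as a left $\cG$-invariant differential operator on $\cG/\cK$ coming from an element of $\bfU(\g g)^{\cK}$, and then to read off its eigenvalue on each irreducible summand of $\sP(V)$ by evaluating at the base point $v_\g k$, using the decomposition $\g g=\tilde{\g b}+\g k$ to isolate the contribution of $\tilde{\g h}$. By Remark \ref{rmk:embedP(V)inGK} the operator $D$ restricts to $D\big|_{\cG/\cK}\in\sD(\cG/\cK)$ along the open embedding of Proposition \ref{prp:orbitmapGK}. Since $\g g$ admits a $\cK$-invariant complement of $\g k$ (taken explicitly in each type-$\mathsf A$ case; at the level of $\cK$-modules it is isomorphic to $\g g/\g k\cong V$ via Lemma \ref{gesurjj}), Proposition \ref{prp:UguK->DG/K} produces $\tilde X\in\bfU^{(d)}(\g g)^{\cK}$ with $D\big|_{\cG/\cK}=\Psi_{\cG,\cK}(\tilde X)$.

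Fix an irreducible summand $V_\lambda\sseq\sP(V)$ with $\tilde{\g b}$-highest weight $\tilde\lambda$ and a highest weight vector $p_\lambda$, and let $\phi_\lambda\in\ccO_{\cG/\cK}(|\cG/\cK|)$ be its image under the embedding of Remark \ref{rmk:embedP(V)inGK}, so that $\phi_\lambda(g)=p_\lambda(g\cdot v_\g k)$. I would first record that $p_\lambda(v_\g k)\neq 0$. Because $\g g=\tilde{\g b}+\g k$ and $\g k\cdot v_\g k=0$, the tangent space to the $\tilde{\g b}$-orbit of $v_\g k$ is $\tilde{\g b}\cdot v_\g k=\g g\cdot v_\g k=V$ by Lemma \ref{gesurjj}, so this orbit is open, hence dense in the irreducible $\cV$. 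As $\C p_\lambda$ is stable under $\tilde{\g b}$, the pullback of $p_\lambda$ along the orbit map is a nowhere-vanishing multiple of $p_\lambda(v_\g k)$; thus $p_\lambda(v_\g k)=0$ would force $p_\lambda=0$, a contradiction.

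The heart of the argument is an eigenvalue identity. Since the $\g g$-action on $\sP(V)$ is the left regular action on $\cG/\cK$ whereas $\Psi_{\cG,\cK}(\tilde X)$ is built from the right regular action, comparing the two at the identity yields
\[
\bigl(\Psi_{\cG,\cK}(\tilde X)\,\phi_\lambda\bigr)(e\cK)=\bigl(S(\tilde X)\cdot p_\lambda\bigr)(v_\g k),
\]
where $S$ is the antipode of $\bfU(\g g)$; the left-hand side also equals $c_\lambda\,p_\lambda(v_\g k)$, where $c_\lambda$ is the scalar by which $D$ acts on $V_\lambda$. I would then use the PBW factorization $\bfU(\g g)=\bfU(\tilde{\g b})\bfU(\g k)$ afforded by $\g g=\tilde{\g b}+\g k$: writing $\tilde X=\sum_i a_ib_i$ with $a_i\in\bfU(\tilde{\g b})$, $b_i\in\bfU(\g k)$ and $\deg a_i+\deg b_i\le d$, we get $S(\tilde X)=\sum_i S(b_i)S(a_i)$ with $S(b_i)\in\bfU(\g k)$ and $S(a_i)\in\bfU(\tilde{\g b})$. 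Because $\tilde{\g n}$ annihilates $p_\lambda$, the element $S(a_i)$ acts on $p_\lambda$ through its image $\pi_{\tilde{\g h}}(S(a_i))$ under the projection $\bfU(\tilde{\g b})=\bfU(\tilde{\g h})\oplus\bfU(\tilde{\g b})\tilde{\g n}\to\bfU(\tilde{\g h})=\sS(\tilde{\g h})$, i.e.\ by the scalar $\tilde\lambda\bigl(\pi_{\tilde{\g h}}(S(a_i))\bigr)$; and because $\g k$ annihilates $v_\g k$, one checks $\bigl(S(b_i)\cdot q\bigr)(v_\g k)=\epsilon(b_i)\,q(v_\g k)$ for every $q\in\sP(V)$, $\epsilon$ being the counit of $\bfU(\g g)$. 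Substituting into the displayed identity and dividing by $p_\lambda(v_\g k)$ gives $c_\lambda=\tilde\lambda(x_D)$ with
\[
x_D:=\sum_i\epsilon(b_i)\,\pi_{\tilde{\g h}}\bigl(S(a_i)\bigr)\in\sS^{(d)}(\tilde{\g h}),
\]
the degree bound holding because only the summands with $b_i$ scalar survive and those have $a_i\in\bfU^{(d)}(\tilde{\g b})$. Since $\tilde X$, hence $x_D$, depends only on $D$, this proves the proposition.

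The step I expect to be the main obstacle is the displayed eigenvalue identity: one must carefully identify the left-invariant/right-regular realization of $\bfU(\g g)^{\cK}$ on $\cG/\cK$ with the dual action on $\sP(V)\cong\ccO_\cV$, keeping track of the antipode and of the Koszul signs that arise when differentiating along odd directions of the supergroup. Once that identity is in place, the two "evaluation at $v_\g k$" reductions (using $\tilde{\g n}\,p_\lambda=0$ and $\g k\cdot v_\g k=0$) and the density of the $\tilde{\g b}$-orbit of $v_\g k$ are routine.
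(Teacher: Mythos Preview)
Your argument is correct and follows the same strategy as the paper: lift $D$ to an element of $\bfU^{(d)}(\g g)^{\cK}$ via Proposition~\ref{prp:UguK->DG/K} and then extract the $\tilde{\g h}$-part using $\g g=\tilde{\g b}+\g k$. The paper's implementation differs only in bookkeeping: it writes $\tilde D=D_1+D_2+D_3$ with $D_1\in\tilde{\g n}\,\bfU(\g g)$, $D_2\in\bfU(\tilde{\g h})$, $D_3\in\bfU(\g g)\,\g k$, kills $D_3$ by right $\cK$-invariance of $\tilde f$ and kills $D_1$ by evaluating along the whole Cartan subgroup $H$ via the identity $\mathrm{L}_X\big|_h=\mathrm{R}_{-\Ad_h X}\big|_h$ together with $\Ad_h(\tilde{\g n})\subseteq\tilde{\g n}$ (so no antipode is needed), and it proves the non-vanishing through the local factorization $\cN\times\cH\times\cK\to\cG$ rather than through openness of the $\tilde{\cB}$-orbit of $v_{\g k}$.
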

\begin{proof}
Let $\cG$, $\cV$, and $\cK$ be defined as above, and let $(\sfq,\sfq^\#):\cG\to \cG/\cK$ be the canonical quotient map. Then $\sfq^\#(|\cG/\cK|):\ccO_{\cG/\cK}(|\cG/\cK|)\to \ccO_{\cG}(G)$ is an injection. For any $\tilde{X}\in\bfU(\g g)$, let  $\mathrm{L}_{\tilde X}$
(respectively,  $\mathrm{R}_{\tilde X}$) denote the action of $\tilde X$ on $\ccO_\cG(G)$ by left invariant (respectively,  right invariant) differential operators.  
By Proposition 
\ref{prp:UguK->DG/K},
 there exists $\tilde D\in\bfU^{(d)}(\g g)^{\cK}$ such that
$\sfq^\#(|\cG/\cK|)(Df)=
\mathrm{L}_{\tilde D} \sfq^\#(|\cG/\cK|)(f)$ for 
every  $f\in\ccO_{\cG/\cK}(|\cG/\cK|)$.

Now set $f:=\sfp_{v_\g k}^\#(|V|)(\phi_{\tilde \lambda})$ where 
$\phi_{\tilde \lambda}\in \sP(V)$ is a highest weight vector of $W$,
and let $\tilde f:=\sfq^\#(|\cG/\cK|)(f)$. Let $\cN$ be the connected Lie subsupergroup of $\cG$ such that
$\Lie(\cN)=\tilde{\g n}$.
Then $\tilde f$ is left $\cN$-invariant  and right $\cK$-invariant.
We can express $\tilde D$ in the form  
$\tilde D=D_1+D_2+D_3$,
where \[
D_1\in 
\tilde{\g n}\bfU^{(d-1)}(\g g),
\ 
D_2\in 
\bfU^{(d)}\big(\tilde{\g h}\big),
\text{ and }
D_3\in
\bfU^{(d-1)}(\g g)\g k.
\]
From $\cK$-invariance of $f$ it follows that $\mathrm{L}_{D_3}\tilde f=0$. 
Furthermore,  we can write
$D_1$ as 
a sum of elements of the form $XD'$ where $X\in\tilde{\g n}$ and $D'\in\bfU(\g g)$. 
Let $\cH:=(H,\ccO_{\cH})$ denote the connected Lie subsupergroup of $\cG$ such that
$\Lie(\cH)=\tilde{\g h}$.
For
$h\in H$ we have
\[
\mathrm{L}_{XD'}\tilde f(h)
=\mathrm{L}_X\left(
\mathrm{L}_{D'}\tilde f
\right)(h)=
\mathrm{R}_{-\Ad_hX}
\left(
\mathrm{L}_{D'}\tilde f
\right)(h).
\]
 Since 
$\mathrm{L}_{D'}\tilde f$ is left $\cN$-invariant
and  $\Ad_h\left(\tilde{\g n}\right)\sseq \tilde{\g n}$, it follows that
$\mathrm{L}_{XD'}\tilde f(h)=0$. Consequently, 
we have shown that 
for $x_D:=D_2$,
\[
\mathrm{L}_{\tilde D}\tilde f(h)=\mathrm{L}_{D_2}\tilde f(h)=\tilde{\lambda}\big(D_2\big)\tilde f(h)
=\tilde\lambda(x_D)\tilde f(h)
.
\] It remains to prove that $\tilde f\neq 0$. 
 To this end, note that the canonical multiplication morphism \[
\cN\times \cH\times \cK\to \cG
\] is a local isomorphism at the identity element. Thus, from analyticity, left $\cN$-invariance, and right $\cK$-invariance of $\tilde f$, it follows that $\tilde f\big|_{H}$ is not identically zero. 
\end{proof}

\bibliographystyle{plain}
\bibliography{Capelli-Jordan}

\end{document}